\def\ZZ{{\mathbb Z}}
\def\CC{{\mathbb C}}
\def\AA{{\mathbb A}}
\def\RR{{\mathbb R}}
\def\PP{{\mathbb P}}
\def\cA{\mathcal{A}}
\def\cB{\mathcal{B}}
\def\cG{\mathcal{G}}
\def\cH{\mathcal{H}}
\def\cC{\mathcal{C}}
\def\cL{\mathcal{L}}
\def\cO{\mathcal{O}}
\def\cM{\mathcal{M}}
\def\cN{\mathcal{N}}
\def\cU{\mathcal{U}}
\def\cT{\mathcal{T}}
\def\cX{\mathcal{X}}
\DeclareMathOperator{\Coker}{Coker} 
\DeclareMathOperator{\Bl}{Bl}
\DeclareMathOperator{\Hom}{Hom}
\DeclareMathOperator{\rank}{rank}
\DeclareMathOperator{\Spec}{Spec}
\DeclareMathOperator{\Stab}{Stab}
\DeclareMathOperator{\Proj}{Proj}
\newtheorem{lemma}{Lemma}[section]
\newtheorem{theorem}[lemma]{Theorem}
\newtheorem{proposition}[lemma]{Proposition}
\theoremstyle{definition}
\newtheorem{definition}[lemma]{Definition}
\newtheorem{remark}[lemma]{Remark}
\newtheorem{notation}{Notation}
\numberwithin{equation}{section}
\newcommand{\bean}{\begin{eqnarray}}
\newcommand{\eean}{\end{eqnarray}}
\newcommand{\be}{\begin{displaymath}}
\newcommand{\ee}{\end{displaymath}}
\newcommand{\bea}{\begin{eqnarray*}}
\newcommand{\eea}{\end{eqnarray*}}
\newcommand{\ol}{\overline}
\begin{document}

\title[A virtual  class of the moduli space of $(\CC^*)^n$-equivariant morphisms]{A virtual fundamental class construction for the moduli space of $(\CC^*)^n$-equivariant morphisms}

\author{Andrei~Musta\c{t}\v{a}}
\address{School of Mathematical Sciences, University College Cork, Ireland}
\email{{\tt andrei.mustata@ucc.ie}}

\date{\today}

\begin{abstract}
Let X be  a smooth projective variety with the action of $(\CC^*)^n$.  The article describes the moduli space of  $(\CC^*)^n$ equivariant morphisms from stable toric varieties into X as  the inverse limit of the GIT quotients of $X$ and their flips when these spaces are enhanced with a naturally associated  Deligne-Mumford stack structure. This description is used for constructing a class in the Chow group of the moduli space of dimension $dim(X)-n$ which is invariant under equivariant deformations of $X$.

\end{abstract}

\maketitle

\bigskip

\section{Introduction}
In \cite{alex} V.Alexeev introduced a moduli space of $(\CC^*)^n$ -- equivariant morphisms from stable toric varieties into the projective space. More generally, one can extend to the case when the target is a smooth projective variety.
    The aim of this article is to construct a virtual fundamental class for the moduli space of equivariant morphisms of stable toric varieties into $X$. However the definition of a virtual class as presented in \cite{behrend} requires the existence of a perfect obstruction theory, condition which is not satisfied by these moduli spaces. This article doesn't intend to give a general framework for dealing with the absence of a perfect obstruction theory. It merely uses ad-hoc methods for constructing a class of the correct dimension that is invariant under equivariant deformations of $X$ and enjoys good functorial properties. We will interpret our definition in terms of the construction in \cite{behrend}. In particular, in the case when $n=1$ this moduli space is a union of components in the fixed locus of a moduli space of stable maps into $X$. In this case we will check that our definition coincides with the definition from \cite{pan} (see Theorem 6.3). In joint work with Anca Mustata, the author intends to use this result for the computation of Gromov-Witten invariants of varieties with $\CC^*$ action. For a higher dimensional torus one might view any of these moduli spaces as a union of components in the fixed locus in certain moduli spaces of stable maps that have yet to be constructed (for details see Section$7$).

In \cite{alex} V.Alexeev showed that the reduced structure of the moduli space of equivariant morphisms from stable toric varieties into a projective space has a morphism into the inverse limit of GIT quotients of the projective space by the torus action. We consider a natural stack structure on the GIT quotients and on flips between them. By abuse of notation, from now on we will call the inverse limit of these stacks the inverse limit of GIT quotients and prove that it is isomorphic with the moduli space constructed in \cite{alex}. Notice that the inverse limit doesn't commute with the operation of passing to the coarse moduli space, but there is a morphism between the coarse moduli space of the inverse limit and the inverse limit of coarse moduli spaces. A simple example for the relation between the two notions is given by the relation between the coarse moduli space of a stack and the coarse moduli space of its inertia stack. The virtual class we construct takes into account only the components of the moduli space which have dimension larger than expected. Comparing our moduli space with an inertia stack, it might be tempting to inquire if there is an analog of the stringy cohomology which also takes into account the components of dimension smaller than expected, but this problem will not be addressed in the present article.

To construct the universal family over the inverse limit of GIT quotients, we will use the moduli space of stable maps from genus $0$ curves into $X$, thus underlying  the connection between the two.

The paper is organized as follows: In Section $2$ we will define the inverse limit of GIT quotients as a
Deligne-Mumford stack and construct its universal family. We will first construct a moduli problem parametrizing non-compact stable toric varieties  and then compactify the universal family. The non-compact and the compact moduli problem coincide in codimension$1$. In section Section $3$ we will describe the inverse limits for GIT quotients corresponding to chambers in the image of the moment map that have a codimension 2 wall in common. In Section $4$ we will introduce our definition of virtual fundamental class.
In Section $5$ we will define tautological rings of inverse limits for certain subcategories of our category of GIT quotients and their flips. The relations between these rings could be viewed as analogous to the axioms satisfied by the virtual fundamental class in the case of the moduli space of stable maps. The significant result of this section is Proposition \ref{push and pull} which highlights the existence of pull-back and push-forward morphisms between these tautological rings. Also the Remark \ref{compute} points out an algorithm of computing the tautological rings. In section 6 we check some functorial properties of the virtual fundamental class, and the relation with the definition of a virtual fundamental class in \cite{behrend}. Section $7$ points out the advantages of the extending the constructions in this article to actions of reductive groups.

\textbf{Acknowledgements}
This article started from many long discussions with Valery Alexeev on the subject. Many of the constructions in this article are the fruits of these discussions.
%I was also influenced by Anca Mustata with whom I work on a closed related topic.
I would like also to thank to Edwin O'Shea who taught me about shellability and the Bruggesser and Mani Theorem.

\section{Inverse limit of GIT quotients}

Let $X$ be a smooth projective variety with a torus $T= (\CC^*)^n$ action. We denote by $\mu:X \to \RR^n$ the moment map associated to a very ample line bundle $L$ on X. In this special case of  torus action, $\mu$ can be thought of as follows (see Chapter 8 in \cite{mum}). Consider an equivariant embedding $\iota :X \hookrightarrow \PP^m$ associated to a linearization of $L^k$. Implicitly, we regard $T$ as embedded in $(\CC^*)^m$, the maximal torus in the automorphism group of $\PP^m$. Let $\nu : \CC^{m+1} \to \RR^{m+1}$ be given by $$ \nu(x_0,...,x_m)= \left(\frac{|x_0|^2}{ \sum_{i=0}^m |x_i|^2},...,\frac{|x_m|^2}{\sum_{i=0}^m|x_i|^2}\right).$$ We will denote by $\{e_i\}_{i \in \{ 0,...,m\}}$ the standard basis in $\RR^{m+1}$.
After composition with the projection $\RR^{m+1} \to \RR^m\cong \RR^{m+1}/<\sum_{i=0}^n e_i>$, this induces a
morphism $\ol{\nu}:\PP^m \to \RR^m $. We can view $\RR^m$ as the cotangent space of $(S^1)^m$, and  $(S^1)^m$ as
naturally embedded in $(\CC^*)^m$, so that there is a natural projection $p:\RR^m \to \RR^n$ induced by the
embedding $(S^1)^n\cong T\cap  (S^1)^m \hookrightarrow (S^1)^m$. The moment map  is defined by
$\mu := p \circ \ol{\nu} \circ i$.

%The set $S=\{ \mu(\overline{T\cdot x}); x\in X \}$ is a collection of compact convex polyhedra.

There exists a natural decomposition $\mu(X)=\bigcup_{l}F_l$ into a union of convex polyhedra, such that the complement of the union of the interiors of the top--dimensional polyhedra (the chambers of $\mu(X)$) are the image of the set $$\{x \in X \mbox{ such that } \mbox{dim}(\mbox{Stab}(x))>0\},$$ where $\mbox{Stab}(x)$ denotes the stabilizer of $x$. Each index $l$ corresponds to a GIT quotient  $ U_l// T $ of $X$ by $T$, where $ U_l=\begin{array}{ll}\{ x\in X; & p \in \mu (\overline{T\cdot x}) \} \end{array} $ is the locus of semi-stable points for any linearization given by a point inside the chamber $p\in F_l$. The quotient $ U_l// T $ is naturally isomorphic to the symplectic reduction $\mu^{-1}(p)/(S^1)^n\cong T\cdot \mu^{-1}(p)/T $.

Let $I$ be a set indexing all the chambers of $\mu(X)$. For each $i\in I$, we will denote by $M_i$ de GIT quotient $  U_i// T$, which in this case is an ordinary orbit space.

Let $J \subseteq \mbox{ Sym}^2 I$ be the set of codimension one faces $F_j$ which are boundaries of two chambers. For each $j=\{ i, i'\}\in J$,  we will denote by $M_j$ the space sitting over both $M_i$ and $M_{i'}$ in the flip from $M_i$ to $M_{i'}$.  For each $j\in J$,
 the boundary $ Y_j$  of the set  $\begin{array}{ll} \{ x\in X; &  \mu (\overline{T\cdot x})=F_j \}\end{array} \subset X $
is a nonsingular subvariety of $X$ whose generic points have a one-dimensional isotropy group $\CC^*_j$. The intersection $U_j \bigcap Y_j$ is the locus of semistable but not stable points for linearizations corresponding to interior points of $F_j$.

%Let $j\in J$, consider a point $r\in F_j$ and consider the locus $U_j=\begin{array}{ll}\{ x\in X; & r \in \mu (\overline{T\cdot x}) \} \end{array}$ of semi-stable points for the linearization corresponding to $r$. The intersection $U_j \bigcap X^j$ is the locus of unstable points for this linearization.

 Concretely, $F_j$ and $\CC^*_j$ can be understood as follows. Let $ H \subset \PP^m$ be the zero locus of a subset of the coordinates $\{x_0,...,x_m\}$. Consider the subtorus  \bea T_H := \begin{array}{lll} \{x \in (\CC^*)^m; & x\cdot y= y, & \forall y \in H \}\end{array}\eea of $ (\CC^*)^m$. Let $\cH$ be the set of all spaces $H$ as above such that $T_H \cap T$ is nontrivial and $\iota(X) \cap H \neq \emptyset$.  The walls of the chambers in the chamber decomposition of $\mu(X)$ are of the form $\ol{\nu}(\iota(X) \cap H)$ for some  $H\in \cH$. In particular, each codimension $1$ wall $F_j$ is the image of a space $H_j \in \cH$ such that $T_{H_j} \cap T=\CC^*_j$ is one dimensional.

 We will denote the quotient $T/\CC^*_j$ by $T_j$.

%For each $j$ there is a unique torus $\CC^*_j$ such that $T= \CC^*_j \bigoplus (\CC^*)^{n-1}$ and for any point $x\in \mu^{-1}(F_i \cap F_{i'})$ with infinite stabilizer $\mbox{ Stab } x$, the torus $\CC_j^*$ is a subgroup of $\mbox{ Stab } x$ and $[\mbox{ Stab } x : \CC_j^*]<\infty$. We will denote the quotient $T/\CC^*_j$ by $T_j$.

\begin{definition}\label{U_i/T} Let $ U_i$ be the stable--point locus for $X$ with a linearization corresponding to any point inside the chamber $F_i$. We will view $M_i$ as the coarse moduli space of the smooth Deligne-Mumford stack corresponding to the functor from schemes to sets $$ B \rightarrow \left\{ \diagram \cT \dto \rto^{\phi} & U_i \\ B \enddiagram  \right\},$$ where $\cT$ is a $T$--bundle over $B$ and $\phi$ is $T$--equivariant. We will denote  this Deligne-Mumford stack by $[X//T]_i$.
\end{definition}
\begin{definition}\label{U_j/T}
If $j=\{ i, i'\}$, let $U_j:=T\cdot \mu^{-1}((\mbox {int } F_i) \cup (\mbox {int } F_{i'}) \cup (\mbox {int } F_i \cap F_{i'}))$, where $\mbox {int }F$ denotes the interior of the set $F$.   We let $\cX//T_j$ be    the functor   from schemes to sets :
$$ B \rightarrow \left\{ \diagram \cT \dto^f\rto^{\phi} & U_j \\ B \enddiagram  \right\},$$ such that $\cT$ is quasiprojective over $B$, the morphism $f$ is flat, both $f$ and $\phi$ are $T$--equivariant (for the trivial action of $T$ on $B$), and for any closed point $b$ in $B$, the fiber $\cT_b$ of $f$ at $b$ satisfies one of the following conditions:

a) $\cT_b\cong T$, equivariantly with respect to the canonical $T$--actions, and $\mu(\phi(\cT_b))$ contains the interiors of both $F_i$ and $F_{i'}$; or

b) $\cT_b= T_{i} \cup T_{i'}$, where  \bea & T_{i}= \cT_b \bigcap \phi^{-1}\mu^{-1}((\mbox {int } F_i) \cup (\mbox {int } F_i \cap F_{i'}) ) \mbox{  and }& \\
 & T_{i'}= \cT_b \bigcap \phi^{-1}\mu^{-1}((\mbox {int } F_{i'}) \cup (\mbox {int } F_i \cap F_{i'}) ) & \eea
are irreducible varieties such that $T_{i'} \backslash T_i \cong T$, $T_i \backslash T_{i'} \cong T$ and $T_i \cap T_{i'} \cong T/ \CC_j^*=T_j$, equivariantly with respect to the canonical $T$--actions.

\end{definition}

Let $B_0\hookrightarrow B$ denote the locus of points over which the fibre is of the form b). We note that the  $f^{-1}(B_0)=\cT_{0i}\bigcup \cT_{0i'}$ where
\bea & \cT_{0i}= f^{-1}(B_0) \bigcap \phi^{-1}\mu^{-1}((\mbox {int } F_i) \cup (\mbox {int } F_i \cap F_{i'}) ) \mbox{  and }& \\
 & \cT_{0i'}= f^{-1}(B_0) \bigcap \phi^{-1}\mu^{-1}((\mbox {int } F_{i'}) \cup (\mbox {int } F_i \cap F_{i'}) ) & \eea
satisfy $[(\cT\setminus \cT_{0i})/\CC^*_j]\cong [(\cT\setminus \cT_{0i'})/\CC^*_j]\cong$ the categorical quotient $\cT/\CC^*_j$.

\bigskip

\begin{notation} As before, we consider the equivariant embedding  $\iota : X \hookrightarrow \PP^m$ corresponding to a linearization on a very ample line bundle $L^k$ on $X$. Let $d$ be the class of the closure of the generic orbit of $\CC^*_j$ in $H_2(\PP^m)$ and $\mu_j:\PP^m \to \RR$ the moment map associated to
 $\CC^*_j$. For $x \in \PP^m$ generic and $t\in \CC^*_j$ we will denote $$t_0 = \mu_j(\lim_{t \to
 0}tx) \mbox{ and } t_{\infty} = \mu_j ( \lim_{t \to \infty} tx).$$
 Let
  $\{t_s\}_{s }$ be the walls of $\mu_j(\PP^m)$ and $$Y_{j,s}= \{ x \in X \mbox{ such that } \mu_j(\CC^*_j x
 )=t_s \}.$$ We denote by $k_s$ the degree of $\ol{\CC^*_j x}$ for
 all $x \in \PP^m$ such that $\mu_j(\ol{\CC^*_j x}) = [t_0, t_s]$.
 Note that all such $x$ form an open set in an equivariant linear
 subspace in $\PP^m$.

 %Let $Y_{j,s}$ denote a component of the fixed locus of $\CC^*_j$ in $U_j$. Consider a curve $C = C_1 \cup C_2$ obtained by equivariantly deforming the closure of a generic orbit, such that $C_1$ and $C_2$ are connected subcurves intersecting at a point in $Y_{j,s}$, while  $C_1$  intersects the source of $X$ for the $\CC^*_j$ action, and  $C_2$  intersects the sink (such a curve exists due to \cite{bs}, Theorem 0.1.2 and Lemma 1.2).
 %Let $k_s $ denote the class of the curve $C_1$ in $H_2(\PP^m)$.

We will denote by   $\ol{M}_{0, \cA_s}(\PP^n, d, a)$  the
Deligne-Mumford stack of weighted stable maps of class $d$ with 2
marked points, where the weights on the marked points are given by
the pair $\cA_s =( \frac{d-k_s}{d} + \epsilon, \frac{k_s}{d} +
\epsilon)$, for $\epsilon
>0$ is very small, and the weight on the map is $a =\frac{1}{d}$ (see
\cite{noi2}). We will denote by $\ol{M}_{0, \cA_s}(X, d, a)$  the
substack of $\ol{M}_{0, \cA_s}(\PP^n, d, a)$ given by those maps whose
image is contained in $X$. Similarly we define $\ol{M}_{0, \cA_{s,s'}}(X,d,a)$ where
$t_{s'} >t_s$ are two consecutive walls and $\cA_{s,s'} = ( \frac{d-k_s}{d}, \frac{k_{s'}}{d})$.
\end{notation}

\begin{proposition}\label{moduli}
a) If $(t_s, t_{s'}) \subset \mu_j(\iota(\CC^*_j x))$  for all $x\in U_i$, then there exists a natural imbedding $[X//T]_i \subset [\ol{M}_{0, \cA_{s,s'}}(X,d,a)/T_j]$.

b)There is a smooth Deligne-Mumford stack $[X//T]_j$ which represents the functor $\cX//T_j$, and  $M_j$ is the coarse moduli space of $\cX//T_j$. Moreover there is a natural embedding $[X//T]_j \subset [\ol{M}_{0, \cA_s}(X,d,a)/T_j]$.

\end{proposition}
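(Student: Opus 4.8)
The plan is to reduce both statements to the construction of explicit families over the GIT quotients, realized inside moduli of weighted stable maps, and then to check that these families represent the relevant functors. For part (a), I would start from a point $x$ in a stable-point locus $U_i$ and consider the closure $\overline{\CC_j^* \cdot x}$ of its $\CC_j^*$-orbit in $\PP^m$; by construction of the walls $t_s$, the hypothesis $(t_s,t_{s'})\subset \mu_j(\iota(\CC_j^*x))$ guarantees that this closure is a genuine $\CC_j^*$-invariant rational curve of class $d$ whose moment image spans the interval between the two consecutive walls. Marking the two limit points $\lim_{t\to 0}tx$ and $\lim_{t\to\infty}tx$ and assigning them the weights prescribed by $\cA_{s,s'}$ produces a weighted stable map of class $d$ into $X$, and the weight $a=1/d$ on the map forces exactly the stable maps that arise this way (no contracted components can be stable with such small weight, and the degree is pinned down). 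This assignment is $T$-equivariant once we quotient by $\CC_j^*$ on the source — the residual action of $T_j=T/\CC_j^*$ on the target matches the $T_j$-action on $\ol M_{0,\cA_{s,s'}}(X,d,a)$ — so it descends to a morphism of stacks $[X//T]_i \to [\ol M_{0,\cA_{s,s'}}(X,d,a)/T_j]$. To see it is an embedding, I would exhibit the inverse on its image: given such a weighted stable map, the source is irreducible (the weights $(\frac{d-k_s}{d},\frac{k_{s'}}{d})$ forbid bubbling at the two walls in this open range), hence the image is a single closed $\CC_j^*$-orbit closure, and picking the point on it over the chamber $F_i$ recovers a point of $U_i$ canonically; injectivity on automorphisms follows because the automorphisms of the stable map come precisely from $\CC_j^*$.

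For part (b), the argument is parallel but now the fibers of $\cX//T_j$ of type b) correspond exactly to the stable maps in $\ol M_{0,\cA_s}(X,d,a)$ whose source has broken into two components meeting over the wall $t_s$. I would construct the universal family of $\cX//T_j$ directly: over $[X//T]_i$ and $[X//T]_{i'}$ we have the families from Definition \ref{U_i/T}, and over the common flip locus $M_j$ the type-b) degeneration is forced by sending $x$ to the chain $\overline{\CC_j^* x}\cup(\text{the orbit closure passing through }\mu^{-1}(t_s))$; the gluing data $T_i\cap T_{i'}\cong T_j$ encodes exactly the node. Conversely, a $B$-family in $\cX//T_j$ determines, fiberwise, either a single $\CC_j^*$-orbit closure (case a)) or a length-two chain of them (case b)), and in each case the moment-map conditions in Definition \ref{U_j/T} translate into the stability and weight conditions defining $\ol M_{0,\cA_s}(X,d,a)$. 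The relation $[(\cT\setminus\cT_{0i})/\CC_j^*]\cong\cT/\CC_j^*$ recorded after Definition \ref{U_j/T} is what guarantees that the quotient by $\CC_j^*$ is well-behaved and yields a flat family of stable maps, so that $\cX//T_j$ is represented by a locally closed substack $[X//T]_j$ of $[\ol M_{0,\cA_s}(X,d,a)/T_j]$; smoothness of $[X//T]_j$ then follows from smoothness of $X$ together with the fact that the $\CC_j^*$-fixed loci $Y_{j,s}$ are nonsingular (stated in Section 2), since the obstruction to deforming such a two-step orbit chain is governed by the normal bundle of $Y_{j,s}$ in $X$, which is a locally free sheaf. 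That $M_j$ is the coarse moduli space is immediate from the construction together with the identification of $M_j$ as the space lying over both $M_i$ and $M_{i'}$ in the flip.

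The functor $\cX//T_j$ is representable by a Deligne–Mumford stack essentially because it is cut out inside $[\ol M_{0,\cA_s}(X,d,a)/T_j]$ by the closed-and-open conditions "the class is $d$", "the image lies in $X$", and "the $\CC_j^*$-action on the source is the canonical one with the prescribed weights", all of which are locally closed conditions; and it has finite, reduced automorphism groups because the only automorphisms are those of the underlying $\CC_j^*$-orbit chains. I expect the main obstacle to be the last point of part (b): verifying that the functor $\cX//T_j$ — which is phrased in terms of abstract flat $T$-equivariant families $\cT\to B$ with a map to $U_j$, not a priori in terms of stable maps — is actually equivalent to the substack of $[\ol M_{0,\cA_s}(X,d,a)/T_j]$ I have described. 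The delicate direction is producing, from an arbitrary family $\cT\to B$ satisfying conditions a)/b), a family of weighted stable maps: one must check that $\cT$, after compactifying and quotienting by $\CC_j^*$, is flat over $B$ (here the codimension-1 agreement of the compact and non-compact moduli problems, invoked in the introduction and in Section 2, is the technical input), that the marked sections $\lim_{t\to 0}$ and $\lim_{t\to\infty}$ extend over $B$, and that the resulting weights stay in the stability range $\cA_s$ even as type-a) fibers specialize to type-b) fibers — i.e. that the wall-crossing in $\mu_j$ is matched precisely by the bubbling in the weighted stable map compactification of \cite{noi2}. This compatibility is exactly the content of the weight choice $\cA_s=(\frac{d-k_s}{d}+\epsilon,\frac{k_s}{d}+\epsilon)$, and spelling it out is where the real work lies.
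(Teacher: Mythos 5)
Your proposal captures the right high-level picture — orbit closures become weighted stable maps, one quotients the source by $\CC^*_j$ and uses the residual $T_j$-action on the target — but it leaves unproved exactly the parts that constitute the paper's proof, and one of its claimed shortcuts does not work.

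First, you explicitly defer the construction of a family of weighted stable maps from an arbitrary family $\cT\to B$ ("where the real work lies"), but that construction \emph{is} the proof. The paper builds the compactified family as a quotient $\cC = ((\cT\times(\CC^2\setminus\{(0,0)\}))\setminus(\cT^j\times(xy=0)))/(\CC^*)^2$, which supplies the two sections $s_0,s_\infty$ in a flat family without any separate flatness or limit-extension argument; and it then constructs the line bundle $L_{\cC}=L(\pi_1(\pi_2^{-1}(h)))\otimes L(s_0)^{k_{s_0}}\otimes L(s_\infty)^{d-k_{s_\infty}}$ together with its $(n+1)$ sections by extending $\phi^*L^k$ across the boundary on a smooth cover. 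Your pointwise description ("mark the two limit points and assign the weights") ignores that in the formalism of \cite{noi2} a weighted stable map is a line bundle plus sections whose cokernel is a skyscraper at the markings; the hypothesis $(t_s,t_{s'})\subset\mu_j(\iota(\CC^*_j x))$ enters precisely through the inequalities $k_{s_0}\le k_s$, $k_{s_\infty}\ge k_{s'}$ that make the twisted bundle satisfy the stability inequalities $a_i+a\dim\Coker e_q\le 1$. Without this degree bookkeeping the claim "the weights pin down the degree and forbid bubbling" is not a proof, and the role of the hypothesis in part (a) never actually appears in your argument.

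Second, your smoothness argument for $[X//T]_j$ is not valid: the assertion that "the obstruction to deforming a two-step orbit chain is governed by the normal bundle of $Y_{j,s}$ in $X$, which is locally free" does not give unobstructedness — a locally free obstruction sheaf can still obstruct. The paper proves smoothness by exhibiting an explicit smooth local model: a root cover $U'=\Spec R[\{y_{i,k}\}]/(y_{i,k}^{|i|}-x_{i,k})$ on which all $\CC^*_j$-weights become $\pm1$, the fiber product of blow-ups $\Bl_{BY'^+_j}U'\times_{U'}\Bl_{BY'^-_j}U'$, and a quotient by the finite group $\Lambda$ with the stack structure supplied by purity of the branch locus. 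Relatedly, representability of $\cX//T_j$ cannot be obtained by "cutting out locally closed conditions" inside $[\ol M_{0,\cA_s}(X,d,a)/T_j]$ before the embedding itself has been constructed; the paper first produces the morphism from families and only then identifies its image. These are genuine gaps rather than stylistic differences.
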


\begin{proof}

 Let $ \left\{ \diagram \cT \dto^f\rto^{\phi} & U_j \\ B \enddiagram \right\} $ be a set of data as in the Definition \ref{U_j/T} above (respectively, Definition \ref{U_i/T}).
 We will first prove the existence of a morphism
 \bea  \cT/\CC^*_j \to \ol{M}_{0, \cA}(X,d,a), \eea
for the set of weights $(\cA,a):= (\cA_s,a)=( (\frac{d-k_s}{d} + \epsilon, \frac{k_s}{d} + \epsilon), \frac{1}{d})$ (respectively $(\cA,a):=(\cA_{s ,s'},a)=( (\frac{d-k_s}{d} , \frac{k_{s'}}{d} ), \frac{1}{d})$).

We recall that such a morphism is given by a family $$\left\{ \diagram \cC \dto^{}\rdashed|>\tip & X \\ \cT/\CC^*_j \enddiagram  \right\}$$
of weighted stable maps with weights $((a_0, a_{\infty}), a)$ is determined by a line bundle $\cL_{\cC}$ on $C$, of fixed degree $d$ on each fiber $C_p$, for any closed point $ p \in \cT/\CC^*_j$, and a morphism
$e:\cO_C^{n+1}\to L_{\cC}$ (specified up to isomorphisms of the target)
satisfying a series of stability conditions:
\begin{enumerate}
\item $\omega_{C|S}(a_0s_0+a_{\infty}s_{\infty})\otimes L_{\cC}^a$ is relatively ample over $ S=\cT/\CC^*_j$,
\item $\Coker e$, restricted over each fiber $C_p$, is a skyscraper sheaf supported only
on smooth points of $C_p$, and
\item for any $ q \in \cC_p$ and for any $ i \in \{ 0, \infty \} $ (possibly empty) such that
$ q= s_i(p)$, the following holds:  $$ a_i + a\dim\Coker e_{q}\leq 1.$$
\end{enumerate}

 \bigskip

 %such that $\phi(\cT^j)\subseteq Y_{j,s}$, where
We first construct $\cC$ as above. Let $\cT^j$ be the union of all $\CC^*_j$--fixed point loci
  of $\cT$.
  (When working with Definition \ref{U_i/T}, then $\cT^j$ is the empty set.)
  Consider the action of  $(\CC^*)^2$ on $\cT \times (\CC^2\setminus\{ (0,0) \})$ defined as follows: on the first factor, via the multiplication map $(\CC^*)^2 \to \CC^* \simeq \CC^*_j$ composed with the induced action on $T$; on the second factor, as
  $(u,v) * (x,y) = (ux, v^{-1} y)$.
 %There exists an open embedding  $\cT \hookrightarrow [\cT \times (\CC^2\setminus\{ (0,0) \}) / (\CC^*)^2]$,    given by the $(\CC^*)^2$--bundle
% \bea      \cT \times (\CC^*)^2 & \to & \cT  \\
%   (p,(x,y)) &\to  &x^{-1}yp  , \eea
%with the $ (\CC^*)^2$--equivariant embedding $\cT \times (\CC^*)^2  \hookrightarrow \cT \times (\CC^2-\{ (0,0) \})$.
The locus of points with non-trivial stabilizers in $\cT \times (\CC^2\setminus\{ (0,0) \})$ is $\cT^j\times (xy=0)$.
% or the locus of points with non-trivial stabilizer?...
Thus the quotient
\bea  \cC := (( \cT \times (\CC^2\setminus\{ (0,0) \}) \setminus (\cT^j\times (xy=0)) )/  (\CC^*)^2 \eea
is an orbit space. The embedding $i: \cT \to \cT \times (\CC^2\setminus\{ (0,0) \}) $ given by $i(p):=(p, (1,1))$ leads to an embedding $\cT\hookrightarrow \cC$, while the projection on the first factor
$p_1: \cT \times (\CC^2\setminus\{ (0,0) \}) \to \cT$ induces a flat morphism $\cC \to \cT/\CC^*_j$.
 Moreover,  we claim that $\phi$ induces  a family of weighted stable maps to $X$
\bea  \cC \dashrightarrow \PP^n \eea
 for the appropriate weights $(\cA_s,a)=( (\frac{d-k_s}{d} + \epsilon, \frac{k_s}{d} + \epsilon), \frac{1}{d})$ (respectively $(\cA_{s ,s'},a)=( (\frac{d-k_s}{d} , \frac{k_{s'}}{d} ), \frac{1}{d})$)
  where the two marked sections are given by the complement of $\cT$ in $\cC$. We will call these marked section $s_0$ and $s_{\infty}$ where $ \lim_{t \to 0} t q \in s_0(\cT/\CC^*_j)$ and $\lim _{t \to \infty} t q  \in s_{\infty}(\cT/\CC^*_j)$ for $q \in \cT$ generic and $t\in \CC^*_j$.

   We proceed now to construct $L_{\cC}$ and its $(n+1)$ global sections which will define $e$. Let $\cG$ denote the graph of the rational map from $\cC$ to $X$ defined by $\phi$, let $\ol{\cG}$ denote its closure in $\cC\times X$, and let $\pi_1: \ol{\cG} \to \cC$ and $\pi_2 : \ol{\cG} \to X$ be the two natural projections. Consider any non-zero section $\sigma$ in $L^k$, such that its intersection with the zero section defines a scheme $ h $ with the property that $\pi_2^{-1}(h)$ doesn't contain any generic point of $\pi_1^{-1}(s_0(\cT/\CC^*_j))$ or $\pi_1^{-1}(s_{\infty}(\cT/\CC^*_j))$.
  We claim that $\pi_1(\pi_2^{-1}(h))$, which a priori is only a Weil divisor, is in fact a Cartier divisor and
  \bean \label{L_C} L_{\cC} = L(\pi_1(\pi_2^{-1}(h))) \bigotimes L(s_0(\cT/\CC^*_j))^{k_{s_{0}}} \bigotimes L(s_{\infty}(\cT/\CC^*_j))^{d-k_{s_{\infty}}}\eean for some $k_{s_{0}}$ and $k_{s_{\infty}}$ chosen such that $L_{\cC} $ will fulfill the requirements of a family of stable maps as above.

We will now check that the above Weil divisor is a Cartier divisor.
Equivalently, we extend the line bundle $\phi^*L^k$ and its section $\sigma$ to $\cC$. However it is more convenient to work on the smooth covers of $\cT$  and $\cC$, respectively:
\bea  \diagram  {\cT  \times (\CC^*)^2 }  \dto^{\pi} \rrto && \cT\times (\CC^2 \setminus \{(0,0)\}) \setminus (\cT^j\times (xy=0)) \dto \\
\cT \rrto && \cC.  \enddiagram \eea
  So we let $m : \cT  \times (\CC^*)^2 \to \cT  \times (\CC^*)^2$ be the isomorphism given  on the first
  factor by the action of   $(\CC^*)^2$ on $\cT$ described above, and on the second factor by the projection
   $p_2:\cT  \times (\CC^*)^2 \to  (\CC^*)^2$. Let $p_1: \cT  \times (\CC^*)^2 \to \cT$ denote the projection on the first factor. Thus $m^{-1}\circ p_1$ is the quotient morphism $\pi$.
  The line bundle  $L'':=(m^{-1})^*p_1^*\phi^*L^k$
% $L'':=(m^{-1})^*(p_1^*\phi^*L^k_{| \cT \times (\CC^*)^2})$
 can be extended to a line bundle $L'$ on $\cT \times (\CC^2 \setminus \{ (0,0)\}) $ which locally on affine connected
 open subsets $\Spec (R) \subset \cT$ is defined as follows.  For any section in $\Gamma( \Spec(R) \times (\CC^*)^2, L'')$
  of coordinate $f'' \in R[x, x^{-1}, y , y^{-1}]$, a corresponding section in $\Gamma(\Spec(R)\times (\CC^2 \setminus \{(0,0)\}), L')$ is given by $f' = f'' x^a y^b$, where
  $a$ and $b$ are minimal integers such that $f'\in R[x,y]$.

  The
  line bundle $L'_{|\cT\times (\CC^2 \setminus \{(0,0)\}) \setminus (\cT^j\times (xy=0))}$ with its given global section descends to
$L(\pi_1(\pi_2^{-1}(h)))$ on $\cC$, while the trivial line bundle on
$\cT\times (\CC^2 \setminus \{(0,0)\})$, with the linearizations
induced by the action of $(\CC^*)^2$ on $x$ and $y$, respectively,
induce $L(s_0(\cT/\CC^*_j))$ and $L(s_{\infty}(\cT/\CC^*_j))$ on
$\cC$.
%   Even though $L'$ and its sections descend to $\cC$, the resulting bundle with sections may not satisfy the desired stability conditions.
With the notations set up in the preamble to the proposition, let
$t_{s_{0}}$ and $t_{s_{\infty}}$
  denote the limits of $\mu_j(tq)$ when $t\to 0$ and $t \to \infty$, respectively, where $q$ is the generic point in $\Spec (R)$ and $t\in \CC^*_j$.
 % For a generic point  $q \in \Spec(R)$ let $Y_{j,s'}$  respectively $Y_{j,s''}$ denote the limit at $0$ respectively $\infty$ of $q$.
%  Consider the section of  $L'$ given locally on $ \Spec R \times (\CC^2 \setminus \{0,0\})$ by $f = f' x^{k_{s'}}y^{d-k_{s''}}$.

A suitable choice of $(n+1)$ linearly independent global sections on
$L^k$ yields $(n+1)$ global sections on $L(\pi_1(\pi_2^{-1}(h)))$,
and thus a morphism $e_1: \cO_{\cC}^{n+1}  \to
L(\pi_1(\pi_2^{-1}(h)))$.  Then the support of $\Coker e_1$ is
clearly included in the complement of $\cT$ in $\cC$. The choice of
the powers $k_{s_{0}}$  and $d-k_{s_{\infty}}$ in formula \ref{L_C}
  comes from inspecting the rank of $\Coker e_1$ on the fiber of $\cC$ over general closed points $p \in \cT/ \CC^*_j$.

 a) For this we will first investigate  the case when $ \left\{ \diagram \cT \dto^f\rto^{\phi} & U_j \\ B \enddiagram \right\} $ is a set of data as in the  Definition \ref{U_i/T}. In this case we have the advantage of knowing that $[X//T]_i$ is
  represented by a smooth Deligne-Mumford stack. Consider $B'$ an \'etale open cover over this stack.
%  because for any other $B$ the computation may be done via pull-back.
Thus $B'$ comes with a smooth family $\cT'$ mapped to $X$. A family $\cC'$ over $\cT'/\CC^*_j$, a rational map from $\cC'$ to $X$, and the closure $\ol{\cG}'$  of its graph, with projections $\pi_1':\ol{\cG}'\to \cC'$ and $\pi_2':\ol{\cG}'\to X$  are associated to $\cT' \to B'$ as above.
As we are interested only in the fibers over the closed points $p \in \cT/\CC^*_j$, we can further restrict our study to a smooth curve in $\cT'/\CC^*_j$ passing through the point $p'$ such that there is an isomorphism   $\cT'_{p'} \cong \cT_p$ of schemes over $X$, and such that the fiber of $\cT'$ over
  a general point in this curve is mapped to a curve of degree $d$ in $X\hookrightarrow \PP^n$.
  Let $\ol{\cG}'_{p'}$ denote the preimage of $p'$ in $\ol{\cG}'$.
   Then we can write $\ol{\cG}_{p'}= C_0 \cup C_1 \cup C_{\infty}$ such that $\pi'_1(C_0) = s'_0(\cT'/\CC^*_j)$ and
   $\pi'_1(C_{\infty})= s'_{\infty}(\cT'/\CC^*_j)$
   and $\pi_1'(C_1) = \cC'_{p'}\cong \cC_p$. Moreover our choices
   imply
   \bea  \deg(\pi'^*_2(L^k)_{|C_0})+\deg(\pi'^*_2(L^k)_{|C_1})+\deg(\pi'^*_2(L^k)_{|C_{\infty}}) =d. \eea
In this case we can define $L_{\cC'}:=L(\pi'_1(\pi'^{-1}_2(h)))$, coming as before with a choice of global sections which induce   $e': \cO_{\cC}^{n+1}  \to
L(\pi'_1(\pi'^{-1}_2(h)))$.

   Because $(t_s, t_{s'}) \subset \mu_j(\iota(\CC^*_j x))$  for all $x\in U_i$,  we deduce that
   \bea  k_{s_{0}} := \dim \Coker e'_{s'_0(p')}= \deg(\pi'^*_2(L^k)_{|C_0}) \leq
   k_s, \mbox{ and } \\
     k_{s_{\infty}}:=d- \dim \Coker e'_{s'_{\infty}(p')} =d-\deg(\pi'^*_2(L^k)_{|C_{\infty}}) \geq k_{s'},   \eea
%  \bea  e'_{s'_{\infty}(p')}=d-k_{s_{\infty}} = \deg(\pi'^*_2(L^k)_{|C_{\infty}})\leq    d-k_{s'},  \eea
   where $k_s$, $k_{s'}$, $k_{s_{0}} $ and $k_{s_{\infty}}$ are like in the notations preceding this Proposition.
   Interpreted in local coordinates
   this justifies the definition of the line bundle $L_{\cC}$  given above by equation (\ref{L_C}). Indeed, by tensoring $e_1$ with the chosen powers of two fixed sections in $L(s_0(\cT/\CC^*_j))$ and $L(s_{\infty}(\cT/\CC^*_j))$ (respectively), we obtain $e: \cO_{\cC}^{n+1}  \to L_{\cC}$ satisfying the required stability conditions.

 The induced morphism $\cT/\CC^*_j \to \ol{M}_{0, \cA_{s,s'}}(X, d, a)$ is $T_j$--equivariant. Also, $T_j$ acts freely on $\cT/\CC^*_j$, with quotient $B$.
  In conclusion there exists a morphism $ B \to [\ol{M}_{0, \cA_{s,s'}}(X, d, a)/T_j]$ such that $\cC$ is the pull-back of the universal family over the stack $\ol{M}_{0, \cA_{s,s'}}(X, d, a)$. In this way, one constructs a morphism from
   $[ \cX //T ]_i$ to $[\ol{M}_{0, \cA_{s,s'}}(X, d, a)/T_j]$ whose image is $M/T_j$, where $M$ is the open set of points with finite stabilizer in a fixed loci of $\ol{M}_{0, \cA_{s,s'}}(X, d, a)$ for the action of $\CC^*_j$. The construction of the inverse $ M/T_j\to [\cX //T]_i $ is a straightforward exercise at the level of universal families.

 Case b).   For $ \left\{ \diagram \cT \dto^f\rto^{\phi} & U_j \\ B \enddiagram \right\} $ a set of data as in the  Definition \ref{U_j/T}, with $j=\{i, i'\}$, it is enough to notice (with the notations in  Definition \ref{U_j/T}), that
 the families
  $\cT\setminus \cT_{0i'}\to B$ and $\cT\setminus \cT_{0i}\to B$ determine morphisms
   $\phi_i: B \to [X//T]_i$ and $\phi_{i'}: B \to [X//T]_{i'}$. Indeed, then  $\cC\setminus \ol{\cT_{0i'}}$ and $\cC\setminus \ol{\cT_{0i}}$, respectively,
   are open subsets in the pull-backs of the universal families on $\cT/\CC^*_j \to \ol{M}_{0, \cA_{s,s'}}(X, d, a)$.
   %  such that the pull-back of the universal families over $[X//T]_i$ and $[X//T]_{i'}$ to $B$ are two open sets forming an open cover of $\cT$.
   Thus they come with a line bundle and $(n+1)$ global sections pulled back from these universal families.
   In this case the line bundle $\cL_{\cC}$ and the morphism $e: \cO_{\cC}^{n+1}  \to L_{\cC}$ can be constructed by gluing the line bundles with sections on $\cC\setminus \ol{\cT_{0i'}}$ and $\cC\setminus \ol{\cT_{0i}}$ with the bundle $\phi^*L^k$  on the open $\cT \subset \cC$, and its chosen set of global sections.

% We can use this to check the description of $\pi_1(\pi_2^{-1}(h))$ as a Cartier divisor.

  %The stability conditions are implied by the following two observations:

  %) i) If $s_1 < s_2$ then $ k_{s_1} < k_{s_2}$

  %ii) For any $y \in \cT/\CC^*_j$, $\mu(\Phi(\cT_y)) \cap F_i \cap F_{i'} \neq
 % \emptyset$ or equivalently $s \in \mu_j(\Phi(\cT_y))$.

 As before, the induced morphism $\cT/\CC^*_j \to \ol{M}_{0, \cA_s}(X, d, a)$ is $T_j$--equivariant, and induces a morphism $ B \to [\ol{M}_{0, \cA_s}(X, d, a)/T_j]$ such that $\cC$ is the pull-back of the universal family over $\ol{M}_{0, \cA_s}(X, d, a)$. As before, the induced morphism
   \bea  \cX //T_j \to [\ol{M}_{0, \cA_s}(X, d, a)/T_j] \eea is an embedding.

%   whose image is $M/T_j$, where $M$ is the open set of points with finite stabilizer in a fixed loci of $\ol{M}_{0, \cA_s}(X, d, a)$ for the action of $\CC^*_j$.
% It is easy now to check that the morphism $\cX //T_j \to M/T_j$ is an isomorphism as one could easily construct its inverse.
  %We recall that this morphism depended on the choice of a component $Y_{j,s}$ of the fixed locus $\mu^{-1}( F_i \cap F_{i'})$ of $\CC^*_j$ in $U_j$.
  %By varying $Y_{j,s}$ among all components of $\mu^{-1}( F_i \cap F_{i'})$, one obtains an open cover of  $\cX //T_j$ by Deligne-Mumford substacks of $[\ol{M}_{0, \cA}(X, d, a)/T_j]$.
  This proves the existence of the  Deligne-Mumford stack $[X//T]_j$, with universal family $\cU_j$.
  %obtained locally by extracting the two marked sections from the pullback of the universal family over $\ol{M}_{0, \cA}(X, d, a)$.
%  Similarly we can show $[X//T]_i \subset [\ol{M}_{0, \cA_{ss'}}(X, d, a)/T_j]$.

\bigskip

  We will proceed with an explicit construction of $[X//T]_j$ in local coordinates.

   We first construct the universal family over $[X//T]_j$. There exists a Zariski open cover of $U_j$ by $T$--equivariant affine open varieties. Let $U = \Spec R$ be such a variety and let $R= \bigoplus_{n \in \ZZ} R_n$ be the grading of $R$ with respect to the weights of the induced $\CC^*_j$--action. Let $\{ x_{i,k}  \}$, with $x_{i,k} \in R_i$,  be a minimal set of generators of the ideal $I$ generated by $ \bigoplus_{i \neq 0} R_i$. Notice that  $\Spec (R/I)=: Y_j$ is smooth, being the fixed locus of $U$  for the
   $\CC^*_j$--action. We can take $U$ small enough for $\{ x_{i,k} \}$ to be a regular sequence, i.e. such that the image of $\{ x_{i,k}\} $ in $I/I^2$ is a basis of the conormal bundle of $Y_j$ in $U$.
    %Let $y_{i,k} = \sqrt[|i|]{x_{i,k}}$ and $R'$ the extension of $R$ by $y_{i,k}$.
   Let $R'=R[\{ y_{i,k} \}]/(y_{i,k}^{|i|}-x_{i,k})_{i,k}$ and $U' =\Spec R'$. Thus $U'/\Lambda \cong U$, where $\Lambda = \prod_{i,k} (\ZZ / i \ZZ)^k$, and the morphism from $U'$ to $U$ is flat. Moreover, there is an action of $\CC^*_j$ on $U'$ with weights $\pm 1$ such that the natural morphism from $U'$ to $U$ is $\CC^*_j$--equivariant. Let $I^+, I^- \subseteq I$ be the ideals generated by those $x_{i,k}$ with $i < 0$, and $i>0$, respectively.  Let $BY_j^{\pm} := \Spec (R/I^{\pm})$.  We will denote by $BY'^{\pm}_j$ the pullback of $BY_j^{\pm}$ to $U'$, and by $\Bl_{BY'^{\pm}_j} U'$ the blow-up of $U'$ along $BY'^{\pm}_j$. Let $\Bl (U') := \Bl_{BY'^+_j} U' \times_{U'} \Bl_{BY'^-_j} U'$. Let $p^{\pm}: BY'^{\pm}_j \to Y'_j$ denote the projections of the  two Bialynicki-Birula strata to the fixed loci  and let $N_{Y'_j|BY'^{\pm}_j}$ denote the normal bundle of $Y'_j$ in $BY'^{\pm}_j$. As all the weights of the $\CC^*_j$ action on $U'$ are $\pm 1$, there are natural isomorphisms $BY'^{\pm}_j \cong N_{Y'_j|BY'^{\pm}_j}$ and  $p^{\pm *}(N_{Y'_j|BY'^{\mp}_j}) \cong N_{BY'^{\pm}_j| U'}$     (as shown in \cite{bs}). Thus any closed point in $\Bl (U')$ can be uniquely associated to either
   \begin{itemize}
   \item[i)] a point $x$ in $U'$ whose $\CC^*_j$--orbit is closed in $U'$; or
   \item[ii)] a point $x$ in $BY'^{\pm}_j$, together with an orbit in $BY'^{\mp}_j$ whose closure intersects $\overline{\CC^*_j\cdot x}$ at a point in $Y'_j$.
   \item[iii)] a point $y$ in $Y_j$ together with two orbits, one in each of the strata $BY'^{\mp}_j$, whose closures pass through $y$.
   \end{itemize}
     Let $\cU'$ denote the set of all objects described by i)--iii), and $\cU$ de set of their $\Lambda$--classes. The  action of $\Lambda$ on $U'$ induces a natural action on $\Bl (U')$, and  $\Bl (U')/\Lambda$ is in bijective correspondence with $\cU$. We claim that the coarse moduli space for the universal family over $[X//T]_j$ is locally isomorphic to $\Bl (U')/\Lambda$. This can be justified by a sequence of standard arguments. For each point $x \in \Bl(U')$ we consider the largest subgroup $\Lambda_x \subseteq \mbox{ Stab}_x\subseteq  \Lambda$ of the stabilizer of $x$, which acts trivially on the object in $\cU'$ parametrized by $x$. We notice that $\Lambda_x$ is a small subgroup of $\mbox{ Stab}_x$ and, unless all the weights of $\CC^*_j$ on $I^+$ or $I^-$ are the same, $\Lambda_x$ is the largest small subgroup of $\mbox{ Stab}_x$. We choose an open subset  $V'_x \subseteq  \Bl(U')    $ containing $x$ and on which $\Lambda_x$ acts like a small group.  By "The purity of the branch locus" Theorem (see \cite{vistoli}), there is a unique smooth stack structure having  $\{ V'_x/\Lambda_x \}_{x\in U} $ as an \'etale atlas and $\Bl (U')/\Lambda$ as coarse moduli space. We will call this stack $[U_j]$. It comes with natural morphisms $\Bl (U') \to [U_j]$ and $[U_j] \to U'/\Lambda\cong U \subset U_j$. We claim that $[U_j]$ is an open substack of the universal family over $[X//T]_j$.

   All the GIT quotients and the categorical quotient of $\Bl (U')$ with respect to $\CC^*_j$ are isomorphic and will be denoted by $\Bl (U')/\CC^*_j$. There is a natural morphism $[U_j] \to (\Bl(U')/\CC^*_j)/\Lambda$. Again there is a Deligne-Mumford stack $D$ having $(\Bl(U')/\CC^*_j)/\Lambda$ as a coarse moduli space, which can be constructed using "The purity of the branch locus" Theorem  as above. Moreover, the morphism $[U_j] \to (\Bl(U')/\CC^*_j)/\Lambda$ factors through a morphism $g: [U_j] \to D$. To check that the composition
   $[U_j] \to D \to [D/T_j]$ is flat reduces to the proof that $\Bl(U') \to \Bl(U')/\CC^*_j$ is flat which follows by direct computation. The morphism  $[U_j] \to [D/T_j]$  together with the $T$-equivariant morphism $[U_j] \to U_j$, satisfy the conditions in Definition \ref{U_j/T}, and thus determine
   a morphism  $g_j:[D/T_j] \to [X//T]_j$  such that
   $[U_j] \to [D/T_j]$ is the pullback through $g_j$ of the universal family $\cU_j \to [X//T]_j$. It remains to argue that $g_j$ is an open embedding.
   As a first observation, the bijection $\Bl(U')\to \cU$ above implies that $g_j$ induces an injective map
   \bea  \Hom(\Spec\CC; [D/T_j]) \to \Hom(\Spec\CC; [X//T]_j) \eea
  at the level of points and an isomorphism $\Spec\CC\times_{[D/T_j]} \Spec\CC \cong \Spec\CC \times_{[X//T]_j} \Spec\CC$.
    Finally  we check that $g_j$ induces isomorphisms on tangent spaces. Over points parameterizing irreducible orbits this is obvious. On the other hand,
    the pull-back of the boundary divisor in $\ol{M}_{0, \cA}(X, d, a)/T_j$   to $[D/T_j]$ (or $ [X//T]_j$, respectively) consist exactly in the substack of $[D/T_j]$ (or $ [X//T]_j$, respectively) parameterizing reducible varieties. The morphism between these substacks obtained as a restriction of $g_j$ is clearly an open embedding, due to their respective fibred product structures. Together, the above show that $g_j$ is an open embedding.

   %As $D$ comes with its universal family $[U_j]$, we obtain a morphism $[D/T_j] \to [X//T]_j$ which establishes an isomorphism between $[D/T_j]$ and an open set in $[X//T]_j$.

\end{proof}

\begin{definition}\label{graph}
We define the graph $G$ associated to a smooth projective variety $X$ with a torus $T= (\CC^*)^n$ action as follows:
\begin{itemize}
\item The set of vertices is $K=I \cup J$, where   $I$ is the set of all the chambers of $\mu(X)$, and  $J \subseteq \mbox{ Sym}^2 I$ is the set of codimension one faces $F_j$ which are boundaries of two distinct chambers.
 \item The edges are of the form $(j, i)$ where $i\in I$, and $j\in J$ is given by a pair $\{i,i'\}\subset I$ with $i\not=i'$.
\end{itemize}

\end{definition}

\begin{remark}\label{colimit}
For any subgraph $ \Gamma \subseteq G$, we denote by $ \underleftarrow{\lim}_{\Gamma}[X//T]$ the colimit of all spaces $[X//T]_k$ corresponding to nodes $k$ of $\Gamma$. We will denote by $I_{\Gamma}$ and $J_{\Gamma}$ the subsets of $I$ and $J$ such that $\{M_{i}\}_{i \in I_{\Gamma}}$ and $\{M_{j}\}_{j \in J_{\Gamma}}$ correspond to the nodes of $\Gamma$, and by \bea e_{ji}:[X//T]_j \to [X//T]_i\eea  the morphisms corresponding to  the set $E(\Gamma)$ of edges in $\Gamma$. Then $\underleftarrow{\lim}_{\Gamma}[X//T]$ can be constructed as the fiber product in the Cartesian diagram
         \bea \diagram  {\underleftarrow{\lim}_{\Gamma}[X//T]}  \rrto \dto && {\prod_{j \in J_{\Gamma}} [X//T]_j} \dto_{m} \\
         {\prod_{i \in I_{\Gamma}}[X//T]_i} \rrto^{\Delta} && {\prod_{e_{ji}\in E(\Gamma)}[X//T]_i}. \enddiagram \eea
 Here $\Delta$ is the diagonal morphism and  $m=\prod_{e_{ji}\in E(\Gamma)} e_{ji}\pi_j$ where $\pi_j$ is the projection on the $j$-th factor of $\prod_{j' \in J_{\Gamma}} [X//T]_{j'}$.

 When $\Gamma$ is a tree, this construction leads to a  class $v(\underleftarrow{\lim}_{\Gamma}[X//T])$ of dimension $\dim(X) - \dim(T)$ in the Chow group of  $\underleftarrow{\lim}_{\Gamma}[X//T] $: $$ v(\underleftarrow{\lim}_{ \Gamma}[X//T]) := \Delta^![\prod_{j \in J_{\Gamma}} [X//T]_j].$$
We will call this the "virtual" class of $\underleftarrow{\lim}_{\Gamma}[X//T]$.
\end{remark}

Before describing the stack structure of $\underleftarrow{\lim}_{\Gamma}[X//T]$, we introduce notations for the possible structures of objects it parametrizes.

\begin{definition} Let $\Gamma$ be any subgraph of $G$, and let $J'\subseteq J$. A graph $\Gamma'$ will be called a $J'$--contraction of $\Gamma$ if it can be constructed from $\Gamma$ by deleting the edges $e_{ij}$ and $e_{ji'}$, and identifying the $i$-th, $j$-th and $i'$-th node for the elements $j=\{ i, i'\} \in J'\subseteq J_{\Gamma }$,  leaving all other nodes and edges unchanged. The nodes of $\Gamma'$ will still be partitioned accordingly into sets $I_{\Gamma'}$ and $J_{\Gamma'}$, with the only extra mention that each triple of nodes $i,i',j$ identified by the contraction is regarded as an element in $I_{\Gamma'}$.

A $T$--equivariant morphism  $\varphi: T_{\Gamma'} \to X$  will be called a $\Gamma'$--type map if its domain is a union $T_{\Gamma'}= \cup_{i \in I_{{\Gamma'}}} T_i$, where $T_i$ are irreducible varieties and
\begin{enumerate}
\item $T_i \cap T_{i'} = \emptyset$ if $i \neq i'$ and $j=\{ i, i'\}$ is not an element of $J_{{\Gamma'}}$;
\item If $\{ i, i'\}=j \in J_{{\Gamma'}}$, then   $T_i \cap T_{i'}  \simeq T/ \CC_j^*$ equivariantly.
% and it is mapped by $\mu\circ\varphi$ into int $F_j$.
\item For each $i\in I_{\Gamma'}$,  we have $T_i\setminus (\cup_{i' \in I_{\Gamma'}\setminus\{ i\}} T_{i'})\simeq T$ equivariantly,
and it is mapped by $\mu\circ\varphi$ into
 $\bigcup_{l\in [i]}(\mbox{int } F_i),$ where $[i]$ denotes the class of all elements in $K_{\Gamma}$ identified with $i$ by the contraction of $\Gamma$ to $\Gamma'$.

\end{enumerate}
\end{definition}

\begin{proposition} (Gluing by fiber products)

For any subgraph ${\Gamma} $ of $G$, let $U_{\Gamma}:= \cup_{i \in I_{{\Gamma}} } U_i \bigcup \cup_{j \in J_{{\Gamma}} }Y_j$, where
  $ Y_j$  denotes the closure of the set  $\begin{array}{ll} \{ x\in X; &  \mu (\overline{T\cdot x})=F_j \}\end{array} \subset X$ for the wall $F_j$ corresponding to $j\in J$.

The colimit $\underleftarrow{\lim}_{\Gamma}[X//T]$ is a Deligne-Mumford stack corresponding to the functor
 $$ B \rightarrow \left\{ \diagram \cT \dto^f\rto^{\phi} & U_{\Gamma} \\ B \enddiagram  \right\},$$
  such that $\cT$ is quasiprojective over $B$, the morphism $f$ is flat, $f$ and $\phi$ are $T$--equivariant morphisms and the restriction of $\phi$ to any fiber of $f$ is a  ${\Gamma'}$--type morphism for some contraction $\Gamma'$ of $\Gamma$ as above.
\end{proposition}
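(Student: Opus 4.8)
The plan is to identify the functor $B\mapsto\{\cT\to U_\Gamma,\ \cT\to B\}$ described in the statement with the fiber product of stacks described in Remark~\ref{colimit}, and then invoke Proposition~\ref{moduli} node-by-node. First I would fix notation: for a family $(\cT\to B,\ \phi:\cT\to U_\Gamma)$ of the indicated type, and for each $i\in I_\Gamma$, let $\cT^{(i)}\subseteq\cT$ be the union of those irreducible components of the fibers mapping by $\mu\circ\phi$ into (a chamber identified with) $F_i$, together with their intersections with the neighbouring components $\cT^{(i')}$ for $j=\{i,i'\}\in J_\Gamma$; similarly, for $j=\{i,i'\}\in J_\Gamma$ let $\cT^{(j)}$ be the union $\cT^{(i)}\cup\cT^{(i')}$ over the locus where these components meet. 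The first substantive step is to check that $\cT^{(i)}\to B$ (resp.\ $\cT^{(j)}\to B$), with the restricted map to $U_i$ (resp.\ $U_j$), is a family of the type appearing in Definition~\ref{U_i/T} (resp.\ Definition~\ref{U_j/T}); this is where the precise shape of a $\Gamma'$--type morphism — conditions (1)--(3) — is used, since they guarantee that over each closed point the fiber of $\cT^{(j)}$ is either a single $T$ or the union $T_i\cup T_{i'}$ glued along $T/\CC^*_j$, exactly matching cases a) and b) there. Flatness of $\cT^{(i)}\to B$ and $\cT^{(j)}\to B$ follows from flatness of $\cT\to B$ together with the fact that the decomposition into the $\cT^{(i)}$ is fiberwise by whole irreducible components, which are cut out by the preimages of the closed walls in $\mu(X)$; I would spell this out on an \'etale chart.

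Next I would assemble these restrictions into a morphism $B\to\underleftarrow{\lim}_\Gamma[X//T]$. By Proposition~\ref{moduli}(a) the datum $\cT^{(i)}\to B$ gives a map $B\to[X//T]_i$, and by Proposition~\ref{moduli}(b) the datum $\cT^{(j)}\to B$ gives a map $B\to[X//T]_j$; the point is that for an edge $e_{ji}$ with $j=\{i,i'\}$, the composition $B\to[X//T]_j\xrightarrow{e_{ji}}[X//T]_i$ agrees with the map $B\to[X//T]_i$ coming from $\cT^{(i)}$, because in both cases the underlying family over $B$ is $(\cT\setminus\cT_{0i'})/\CC^*_j$ by the identification recorded just after Definition~\ref{U_j/T} (that $[(\cT\setminus\cT_{0i'})/\CC^*_j]\cong\cT/\CC^*_j$). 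This compatibility is exactly the condition that the induced maps $\prod_i B\to\prod_i[X//T]_i$ and $\prod_j B\to\prod_j[X//T]_j$ equalize the two morphisms $\Delta$ and $m$ in the Cartesian square of Remark~\ref{colimit}, hence factor through the fiber product $\underleftarrow{\lim}_\Gamma[X//T]$. Conversely, given a $B$--point of the fiber product, i.e.\ compatible families $\cT^{(i)}\to B$ and $\cT^{(j)}\to B$ from the stacks $[X//T]_i$ and $[X//T]_j$, I would glue them along the prescribed intersections $T_i\cap T_{i'}\simeq T/\CC^*_j$ — using the canonical gluing data provided by the isomorphisms $e_{ji}\pi_j\cong$ the $i$-th projection — to produce a single family $\cT\to B$ with $\phi:\cT\to U_\Gamma$ whose fibers are $\Gamma'$--type for the contraction $\Gamma'$ determined by which of the $\cT^{(j)}$ are fiberwise reducible over which points of $B$. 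One then verifies these two constructions are mutually inverse and natural in $B$, which gives the claimed isomorphism of functors; since $\underleftarrow{\lim}_\Gamma[X//T]$ is by construction a fiber product of smooth Deligne--Mumford stacks (Proposition~\ref{moduli}), it is a Deligne--Mumford stack, and it represents the functor.

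I expect the main obstacle to be the gluing construction in the converse direction, and in particular making the contraction $\Gamma'$ well-defined and the gluing flat. Over the open locus of $B$ where a given $\cT^{(j)}$ has irreducible ($\cong T$) fibers one must contract the edge and identify the three nodes $i,j,i'$, and the subtle point is that the locus in $B$ where this happens is open but its complement (where the fiber degenerates to $T_i\cup T_{i'}$) can be arbitrary closed; I would handle this exactly as in the local model at the end of the proof of Proposition~\ref{moduli}, where $\mathrm{Bl}(U')/\Lambda$ simultaneously carries the generic orbit and the broken configurations, and where flatness of $\cT\to B$ was reduced to the flatness of $\mathrm{Bl}(U')\to\mathrm{Bl}(U')/\CC^*_j$. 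The remaining points — that the glued family is quasiprojective over $B$, that $T$--equivariance is preserved, and that fibers are genuinely $\Gamma'$--type (rather than some further degeneration) — follow from the corresponding statements in Definitions~\ref{U_i/T} and~\ref{U_j/T} applied chart-by-chart, together with the observation that a $\Gamma'$--type configuration is precisely one built from the local models $[U_j]$ glued along $[X//T]_i$'s, so no new phenomena appear beyond codimension one, which was already analysed in Proposition~\ref{moduli}.
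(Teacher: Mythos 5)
Your proposal is correct and follows essentially the same route as the paper: decompose an arbitrary family into open subfamilies indexed by the nodes of $\Gamma$, apply Proposition~\ref{moduli} node-by-node, check compatibility along the edges $e_{ji}$ so as to land in the fiber product of Remark~\ref{colimit}, and conversely build the universal family over the fiber product by gluing the pullbacks $q_j^*\cU_j$. One small correction: that gluing is performed along the \emph{open} $T$--bundles $\cC_{ij}$ sitting over the shared $i$--nodes (an open gluing of schemes, which is why flatness and quasiprojectivity come for free), not along the closed fiberwise intersections $T_i\cap T_{i'}\simeq T/\CC^*_j$, which describe the internal structure of a single reducible fiber of one $\cU_j$ rather than the interface between the families attached to two different walls.
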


\begin{proof}  Consider the morphisms $q_j:\underleftarrow{\lim}_{\Gamma}[X//T]\to [X//T]_j$. The universal family over $\underleftarrow{\lim}_{\Gamma}[X//T]$  can be obtained by gluing together the pullbacks
 of the universal families over all $[X//T]_j$ for all  $j\in J_{\Gamma}$. Indeed, consider $j=\{i,i'\}\in J_{\Gamma}$ and the corresponding morphism $e_{ji}:[X//T]_j \to [X//T]_i$, and let  $p_j: \cU_j\to [X//T]_j$ and $p_i: \cU_i \to [X//T]_i$, denote the universal families over the domain and target, with their morphisms $\phi_j: \cU_j \to X$ and  $\phi_i: \cU_i\to X$. As noted in the proof of Proposition \ref{moduli}, there is a Cartier divisor $D_j$ in $[X//T]_j$ such that $p_j^{-1}(D_j)$ consists exactly of the reducible fibers of $p_j$. The complement $\cC_{ij}$ of $p_j^{-1}(D_j) \bigcap \phi_j^{-1}\mu^{-1}(\mbox{ int } F_{i'})$ in $\cU_j$ is a $T$--bundle over $[X//T]_j$ which determines the map $e_{ji}$. The pullbacks of $q_j^*\cU_j$ can now be glued  along $q_j^*\cC_{ij}\cong q_{j'}^*\cC_{ij'}$ to yield a family $\cU_{\Gamma}$ with the required properties. For any family $\cT\to B$,  $\phi:\cT\to U_{\Gamma}$ with these properties, one finds open subschemes $\cT_j$ and $\cT_i$ of $\cT$ which induce compatible morphisms $B\to [X//T]_j$ and $B\to [X//T]_i$ respectively, and thus a map $\rho: B\to \underleftarrow{\lim}_{\Gamma}[X//T]$ such that  $\rho^*\cU_{\Gamma}\cong \cT$.

\end{proof}

In the case $\Gamma= G$ we proceed now in compactifying the fibers of any family $f:\cT \to B$. There is a natural way to comactify each  fiber of $f$. For each $ x \in B$ and each orbit $T_{i,x}$ of the action of $T$ on $\cT_x$ there is a natural compactification of $T_{i,x}$ which is compatible with its comactification in $\cT$.  For each boundary  divisor $T_j$ of $T_i$ there is one parameter group $i_j:\CC^*_j \hookrightarrow T$ such that $\lim_{t \rightarrow 0} i_j(t)x \in T_j$ for a generic $x$ in $T_i$. We Denote by $\AA^1_j$ the affine line obtained by adding the point $0$ to $\CC^*_j$. The compactification of $T_i$ could be obtained as a GIT quotient of $\prod_{i \in j}\AA_j^1 \times T_{i,x}$ via the natural action of $\prod_{i \in j} \CC^*_j$ (see \cite{cox}). It would be then easy to see that these compactifications can be glued along the boundaries to give a compactification of the fiber $\cT_x$. However we have to see that these fibers fit into a flat family over $B$ so we would like to compactify all $\cT$.

 The line bundle $\cL=\phi^*(\cO (1))$ pulled-back from $X$ to$\cT$ is ample along the fibers of $\cT \to B$ as $\phi$ restricted to anyone of these fibers is ample. For any fiber $\cT_x$ of $f$ we pick  a number $k$ large enough such that $\cL_{|\cT_x}^k$ is very ample and consider the map $h_x:\cT_x \to \PP^N$ given by the equivariant sections of $\cL^k$. We also notice that this map can be extended on $\cT_U$ for some etale neighborhood $U$ of $x$ such that $h_y:\cT_y \to \PP^N$ is an embedding for any $y \in U$. We can compactify the fibers of $\cT_U $ by taking their closure $\overline{\cT}$ in  $U \times \PP^n$.

\begin{proposition}

For any $x \in U$ the fiber of the projection $\overline{f}: \overline{\cT_U} \to U$  restricted to $x$ is $\overline{\cT_x}$ and the family $\overline{f}: \overline{\cT_U} \to U$ is flat.

\end{proposition}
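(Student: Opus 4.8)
The plan is to reduce the statement to a standard flatness criterion. First I would observe that the question is local on $U$, so we may assume $U=\Spec A$ is affine and that $\overline{\cT_U}\hookrightarrow U\times\PP^N$ is the scheme-theoretic closure of the locally closed embedding $\cT_U\hookrightarrow U\times\PP^N$ defined by the equivariant sections of $\cL^k$. Since $\overline{\cT_U}$ is reduced (being a closure inside a scheme over a reduced base, with generic fibers the irreducible toric varieties $\cT_y$) and the projection $\overline{f}:\overline{\cT_U}\to U$ is projective, to conclude flatness it suffices, by the characterization of flatness over a regular base via constancy of Hilbert polynomials (or more precisely, since $U$ is a smooth curve or a regular scheme in the application, flatness is equivalent to the structure sheaf having no associated points lying over a point of $U$, equivalently to torsion-freeness of $\overline{f}_*\cO_{\overline{\cT_U}}(m)$ for $m\gg 0$), to check that the Hilbert polynomial of $\overline{f}^{-1}(x)$ with respect to $\cO(1)$ is independent of $x\in U$.

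Next I would identify the actual fiber $\overline{f}^{-1}(x)$. A priori the scheme-theoretic fiber of the closure could be larger than $\overline{\cT_x}$ (the closure of the fiber), so the content of the first assertion is that these agree. I would argue as follows: the compactification described combinatorially just above the proposition — realizing each $\overline{T_{i,x}}$ as a GIT quotient of $\prod_{i\in j}\AA^1_j\times T_{i,x}$ and gluing along boundary strata — produces, as $x$ varies in an étale neighborhood, a family whose fibers are toric varieties with a fixed fan (the fan is determined by the chamber structure of $\mu(X)$, which is constant). This gives a flat family $\cZ\to U$ with $\cZ_x=\overline{\cT_x}$ and a natural $U$-morphism $\cZ\to U\times\PP^N$ extending $\cT_U\hookrightarrow U\times\PP^N$; its image is closed, contains $\cT_U$, and is therefore exactly $\overline{\cT_U}$. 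Since $\cZ\to\overline{\cT_U}$ is then a proper bijective birational morphism of reduced schemes and $\overline{\cT_U}$ is seminormal (toric, hence normal, in the generic fibers; one checks seminormality of the total space), it is an isomorphism. Hence $\overline{f}$ is flat with fibers $\overline{\cT_x}$.

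Alternatively, and perhaps more cleanly, I would avoid the seminormality discussion by working with Hilbert schemes: the map $y\mapsto [\overline{\cT_y}]$ defines a morphism from $\cT_U$'s base to the Hilbert scheme of $\PP^N$, and because all the $\overline{\cT_y}$ are toric varieties with the same fan and the same polarization type (the weights $k_s$ entering the line bundle $L_{\cC}$ in Proposition \ref{moduli} are locally constant), their Hilbert polynomials coincide; flatness of the universal family over this Hilbert point then pulls back to flatness of $\overline{f}$, and the scheme-theoretic fiber is the one parametrized, namely $\overline{\cT_x}$.

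The main obstacle is the identification of the scheme-theoretic fiber of the closure with $\overline{\cT_x}$, i.e. ruling out embedded or lower-dimensional components of $\overline{\cT_U}$ over special points $x$; equivalently, showing the combinatorial/GIT compactification of the fibers genuinely fits into the closure and exhausts it. Everything else — projectivity, reducedness, the passage from constant Hilbert polynomial to flatness over a regular base — is routine. I would handle the obstacle by the fan-constancy argument: since the fan of each $\overline{\cT_x}$ is pinned down by the fixed chamber decomposition of $\mu(X)$ and is independent of $x$, the family $\cZ\to U$ of these toric compactifications is automatically flat (toric varieties with a fixed fan glue into a flat family), and the comparison morphism $\cZ\to\overline{\cT_U}$ is finite, birational, and surjective, hence an isomorphism onto the seminormalization, which by normality of the fibers forces $\overline{\cT_U}=\cZ$.
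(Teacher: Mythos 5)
Your proposal has two genuine gaps, one in the main mechanism and one in the reduction. The central premise --- that the fibers $\overline{\cT_x}$ are ``toric varieties with the same fan,'' pinned down by the chamber decomposition of $\mu(X)$, so that they automatically assemble into a flat family $\cZ\to U$ --- is false. The fibers of these families are \emph{stable} toric varieties: their combinatorial type is a polyhedral subdivision of the moment polytope that jumps as $x$ varies (this is the whole content of the moduli problem --- over special points the fiber breaks into more components, corresponding to different contractions $\Gamma'$ of the graph). What is constant is only the \emph{total} moment polytope $\mu'(\overline{\cT_x})$, not the fan. Consequently there is no auxiliary flat family of fixed-fan toric varieties to compare with the closure, and the seminormalization comparison has nothing to latch onto. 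The paper's proof instead establishes constancy of the Hilbert polynomial directly: equivariant sections of $\cL^{k}$ (and its powers) on each orbit closure $\overline{T_{i,x}}$ are in bijection with the lattice points of $\mu'(\overline{T_{i,x}})$, sections attached to interior lattice points extend by zero across the other components, and hence $h^0(\overline{\cT_x},\cL^{km})$ equals the number of lattice points of the fixed total polytope, independently of $x$. This lattice-point/extension-by-zero argument is the missing idea; without it (or the false fan-constancy claim) neither the flatness nor the identification of the fiber goes through.

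The second gap is your assumption that $U$ is reduced, or ``a smooth curve or a regular scheme in the application.'' The base $B$ here is an arbitrary scheme (the moduli functor is defined on all schemes), so $U$ may carry nilpotents, and constancy of Hilbert polynomials only yields flatness of the pullback to $U_{\mathrm{red}}$. The paper devotes the second half of its proof to exactly this: filtering by powers of the nilpotent ideal $\cN$ to show $f_*(\cL^k)$ is coherent, using $R^1f_*(\cL^k)=0$ for $k\gg 0$ together with flatness of $\cT_U\to U$ to conclude $f_*(\cL^k)$ is locally free and agrees with $\overline{f}_*(\cL^k)$, and then recovering $\overline{\cT_U}$ as the relative $\Proj$ of $\bigoplus_k \overline{f}_*(\cL^k)$ to get flatness over all of $U$ and the identification of the scheme-theoretic fibers. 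Your hedge toward torsion-freeness over a regular base does not substitute for this step.
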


\begin{proof} We will first need to check that the Hilbert polynomial of $\overline{\cT_x}$ is independent of $x$. We will show that $h^0(L^k,\overline{\cT_x})$ is independent of $x$ and the same is true for any  power of the line bundle. Let $\mu':\cT_U \to \RR^k$ be the moment map associated to the  linearization of $\cL^k$. There is a natural lattice inside $\RR^n$ corresponding to the 1 forms pulled back via equivariant maps from $(\CC^*)^n \to \CC^*$. For each orbit $T_{i,x}$ the equivariant global sections of $\cL^k$ on $\overline{T_{i,x}}$ are in bijections with the lattice points in $ \mu'(\overline{T_{i,x}})$. The global sections which are in the interior of $\mu'(T_{i,x})$ extend by $0$ on all the other components of $\overline{f}_x$. As  a consequence, for any $x \in U$  $h^0(L^k,\overline{\cT_x})$ is the number of lattice points in $\mu'(\overline(\cT_U))$. The same proof works for any power of $\cL^k$. So far this shows that the pullback of  $\overline{\cT_x}$ to the reduced scheme $U_{red}$ is flat. Let $\cN$ be the nilpotent ideal of $U$. There exists an integer $l$ such that $\cN^l=0$.

Using the exact sequences $$ 0 \rightarrow \cN^s \otimes f_*(\cL^k ) \rightarrow  \cN^{s-1} \otimes f_*(\cL^k ) \rightarrow  \frac{ \cN^s}{  \cN^{s-1}} \otimes  f_*(\cL^k ),$$ for all $0<s<l$ we obtain that $ f_*(\cL^k ) $ is finitely generated.
% If $k$ is large enough the short exact sequences s $$ 0 \leftarrow \cN^s \otimes \cL^k  \leftarrow  \cN^{s-1} \otimes \cL^k \leftarrow  \frac{ \cN^s}{  \cN^{s-1}} \otimes  \cL^k  \leftarrow 0$$ %can be used to check that
If $k$ is large enough $R^1f_*(\cL^k )=0$  so using a Cech resolution of  $ f_*(\cL^k ) $ and the fact that the morphism from $\cT_U \to U$ is flat we obtain that  $ f_*(\cL^k ) $ is locally free.
Notice that this implies in particular that for large enough $k$,   $ f_*(\cL^k ) $ is the same with $\overline{f}_*(\cL^k)$. As we can recover $ \overline{\cT_U}$ from the sum of  $\overline{f}_*(\cL^k)$ for high enough powers of $k$ we conclude that   $ \overline{\cT_U}$ is flat over $U$.

\end{proof}

 To compactify the whole family $f :\cT \to \PP^n$ we embed it in $Proj(f_*(L^k))$ for a large enough $k$ and consider its closure $\overline{\cT}$. It is easy now to check that the morphism $\phi$ extends to $\overline{\cT}$. We conclude that for $ X= \PP^n$,  $\underleftarrow{\lim}_{\Gamma}[X//T]$  is the same moduli space as the one constructed in \cite{alex} Theorem 2.10.10..

In the special case $T= \CC^*$ $\underleftarrow{\lim}_{\Gamma}[X//T]$ is embedded in $\ol{M}_{0, 0}(X, d)$ as a union of some of the fixed loci of the $T$ action.

\section{Local geometry around codimension two walls}

 Let $X$ be a smooth projective variety with a torus $T= (\CC^*)^n$ action and let $G$ be the associated graph as defined in the previous section. Let $L$ be a loop of $G$. We say that $L$ is local or simple if there exists a codimension two face $F$ of a polytope in $\mu(X)$ such that all the polytopes corresponding to the vertices $k$ of the loop contain $F$ as a face. Let $F_k$ denote the face corresponding to the vertex $k$ of the loop.

  For each $j \in J$ which is a vertex of $L$, let $\CC^*_j \subset T$  denote the isotropy group for points  in  $\begin{array}{ll} Y_j :=\{ x\in X; &  \mu (T\cdot x) \subseteq F_j \}\end{array} \subset X$.
  We will denote the components of $Y_j$ by $Y_{j,s}$.
  Notice that if $j$ and $j'$ are such that $Y_j \cap Y_{j'} \neq \emptyset$, then there exists a subtorus $T_F \simeq (\CC^*)^2$ such that $\CC^*_{j''}$ is a subgroup of $T_F$, for all $j''$ such that $F_{j''}$ is one of the faces containing $F$. Assuming such $j$ and $j'$ exist, we will denote by $Z_s$, $s \in B$ all components of the fixed locus for the $T_F$--action on $X$ which are subsets of $Y_j \cap Y_{j'}$ for some distinct $j, j'$ as above, and such that  $F\subseteq \mu(Z_s)$. We further denote by $Y_{j,s}$ the component of $Y_j$ which contains $Z_s$. Thus $Y_{j,s} \cap Y_{j',s}=Z_s$. We denote by $BY_{j,s} ^+$ and $BY_{j,s} ^-$ the subvarieties covered by all $\CC^*_j$--orbits starting and ending in $Y_{j,s}$, respectively.

\begin{proposition} \label{void intersection}
$$\begin{array}{ccc} BY_{j,s}^+ \cap BY_{j,s'}^+= \emptyset, & BY_{j,s}^+ \cap BY_{j,s'}^-= \emptyset  \mbox{ and} & BY_{j,s}^- \cap BY_{j,s'}^-= \emptyset \end{array}$$
whenever $Y_{j,s}\neq Y_{j,s'}$.

\end{proposition}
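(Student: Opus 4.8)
The plan is to reduce everything to a statement about the Bialynicki-Birula stratification for the single one-parameter subgroup $\CC^*_j$ and the fact that distinct fixed components of a $\CC^*_j$-action on a smooth variety are disjoint. First I would recall that $BY_{j,s}^{\pm}$ is by definition the union of all $\CC^*_j$-orbits whose limit as $t\to 0$ (resp. $t\to\infty$) lies in the fixed component $Y_{j,s}$; since $Y_{j,s}$ and $Y_{j,s'}$ are, by construction, \emph{distinct} connected components of the $\CC^*_j$-fixed locus $Y_j$, they are disjoint closed subvarieties of $X$. The key observation is that each point $x$ in the open dense part of a BB stratum $BY_{j,s}^{+}$ has a \emph{well-defined} limit $\lim_{t\to 0} i_j(t)\cdot x \in Y_{j,s}$, and this limit point is unique; likewise $\lim_{t\to\infty}$ is well defined on $BY_{j,s}^{-}$. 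So the three statements amount to: a point cannot have its $t\to 0$ limit in two different fixed components, a point cannot have $t\to 0$ limit in one component and $t\to\infty$ limit in another of the \emph{same} torus $\CC^*_j$ while those two limits are forced by hypothesis to coincide, and the analogous statement for $t\to\infty$.

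The first and third equalities, $BY_{j,s}^+ \cap BY_{j,s'}^+ = \emptyset$ and $BY_{j,s}^- \cap BY_{j,s'}^- = \emptyset$, follow immediately: a point in the intersection would have its (unique) $t\to 0$ limit simultaneously in $Y_{j,s}$ and in $Y_{j,s'}$, contradicting disjointness of distinct fixed components — after passing to the closures of strata one argues the same way using that the BB decomposition of the smooth variety $X$ (or of the relevant smooth $Y_j$-neighborhood) is a disjoint union indexed by fixed components, so closures of strata over distinct components meet only along lower strata which again lie over a single fixed component. The mixed statement $BY_{j,s}^+ \cap BY_{j,s'}^- = \emptyset$ for $Y_{j,s}\neq Y_{j,s'}$ is where I'd be most careful: here I would use the extra geometric input from the preamble, namely that $Y_{j,s}$ is the chosen component of $Y_j$ \emph{containing} $Z_s$ and $Y_{j,s}\cap Y_{j',s} = Z_s$, together with the fact that along the codimension-two wall $F$ all the relevant isotropy groups sit inside a common rank-two torus $T_F$. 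A point $x\in BY_{j,s}^+\cap BY_{j,s'}^-$ would flow (under $i_j$) from a point of $Y_{j,s}$ to a point of $Y_{j,s'}$; but the whole orbit closure $\overline{\CC^*_j\cdot x}$ then maps under $\mu$ into a single segment of $\mu_j$ between consecutive walls, and its two endpoints land in $\mu(Y_{j,s})$ and $\mu(Y_{j,s'})$ respectively, forcing these to be the two endpoints of that segment. Tracking this back, the orbit of $x$ connects the two components, but the indexing convention (each stratum over a fixed component, $+$ meaning "emanating from", $-$ meaning "landing in") makes $BY_{j,s}^+$ consist of orbits landing on the \emph{opposite} side from $Y_{j,s}$ near $F$, so an orbit cannot simultaneously have source in $Y_{j,s}$ and sink in $Y_{j,s'}$ unless $Y_{j,s}=Y_{j,s'}$; I would spell this out by contradiction using connectedness of $\overline{\CC^*_j\cdot x}$ and the fact that $Y_{j,s},Y_{j,s'}$ are distinct fixed components so cannot be joined by a single $\CC^*_j$-orbit on both its ends.

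The main obstacle I anticipate is purely bookkeeping: making precise, in a way compatible with the blow-up construction $\Bl(U')$ from Proposition \ref{moduli} and the local model there, exactly which orbits are included in $BY_{j,s}^{\pm}$ when $x$ lies in a \emph{higher-codimension} stratum (cases ii) and iii) of that proposition), so that "the limit lies in $Y_{j,s}$" is unambiguous; once the convention is pinned down the disjointness is formal. I would therefore organize the proof as: (1) fix the convention and recall $Y_j = \sqcup_s Y_{j,s}$ is a disjoint union of smooth connected fixed components; (2) prove $BY_{j,s}^{\pm}$ retracts ($\CC^*_j$-equivariantly) onto $Y_{j,s}$, so a point of $BY_{j,s}^{\pm}$ determines $Y_{j,s}$ uniquely; (3) deduce the $+/+$ and $-/-$ emptiness directly; (4) handle $+/-$ by the orbit-closure/moment-map argument above, using that a single $\CC^*_j$-orbit cannot have both its limit points in distinct fixed components unless it is itself a fixed point, which would place it in $Y_{j,s}\cap Y_{j,s'} = \emptyset$.
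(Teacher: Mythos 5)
Your treatment of the $+/+$ and $-/-$ cases is correct and is exactly the paper's: since $Y_j$ is smooth its distinct components $Y_{j,s}$, $Y_{j,s'}$ are disjoint, and a point of $X$ lies on a unique $\CC^*_j$-orbit with a unique $t\to 0$ (resp.\ $t\to\infty$) limit, so it cannot belong to two plus-strata (resp.\ two minus-strata) over different components. Note that the paper's $BY_{j,s}^{\pm}$ are the strata themselves, not their closures, so the extra care you propose about lower strata in closures is not needed here.

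The mixed case $BY_{j,s}^+\cap BY_{j,s'}^-=\emptyset$ is where your argument has a genuine gap. Your closing justification --- that $Y_{j,s}$ and $Y_{j,s'}$ ``are distinct fixed components so cannot be joined by a single $\CC^*_j$-orbit on both its ends'' --- is false as a general principle: on $\PP^1$ with the standard $\CC^*$-action the open orbit joins the two distinct fixed points, and generic $\CC^*_j$-orbits in $X$ do flow between distinct fixed components all the time. The ``opposite side near $F$'' remark is not a proof either, and the apparatus of $Z_s$, $T_F$, $Y_{j',s}$ you invoke is irrelevant: the statement concerns a single $j$. What actually makes the argument work, and what the paper uses, is that \emph{both} $Y_{j,s}$ and $Y_{j,s'}$ are contained in $\mu^{-1}(F_j)$, and the $\CC^*_j$-moment map factors as $\mu_j=g\circ\mu$ with $g:\RR^n\to\RR$ a projection sending the wall $F_j$ to a single point; hence $\mu_j(Y_{j,s})=\mu_j(Y_{j,s'})=g(F_j)$. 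For $y\in BY_{j,s}^+\cap BY_{j,s'}^-$ this forces $\mu_j(\lim_{t\to 0}ty)=\mu_j(\lim_{t\to\infty}ty)$, and since $\mu_j$ separates the two limit points of any non-trivial $\CC^*_j$-orbit, $y$ must be $\CC^*_j$-fixed, placing it in $Y_{j,s}\cap Y_{j,s'}=\emptyset$. You come close when you say the two endpoints of the orbit closure land in $\mu(Y_{j,s})$ and $\mu(Y_{j,s'})$ ``at the two endpoints of that segment,'' but you treat these as distinct endpoints of a non-degenerate segment rather than observing they coincide --- which is the entire point. Inserting that one observation repairs the proof and recovers the paper's argument.
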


\begin{proof}

First notice that because $Y_j$ is smooth, $Y_{j,s} \cap Y_{j,s'} = \emptyset$. This implies that $BY_{j,s}^+ \cap BY_{j,s'}^+= \emptyset$ and $BY_{j,s}^- \cap BY_{j,s'}^-= \emptyset$.
We still have to prove that $BY_{j,s}^+ \cap BY_{j,s'}^-= \emptyset$.

  If $BY_{j,s}^+ \cap BY_{j,s'}^- \neq \emptyset$ consider $y \in BY_{j,s}^+ \cap BY_{j,s'}^- $.  So $\ol{\CC^*_j y} \cap Y_{j,s} \neq \emptyset$ and $\ol{\CC^*_j y} \cap Y_{j,s'} \neq \emptyset$.   The moment map  $\mu_j:X \to \RR$ associated to the action of $\CC_j^*$ on $X$ factors through $\mu:X \to \RR^n$ and a projection $g: \RR^n\to \RR$.
  As $g(F_j)=\mu_{j}(Y_{j,s})=\mu_{j}(Y_{j,s'})$ is a point in $\RR$, we conclude that $\mu_j(\lim_{t\rightarrow 0}ty) =\mu_j(\lim_{t\rightarrow \infty}ty)$. This could only happen if $y$ is a fixed point for $\CC^*_j$, which contradicts $Y_{j,s} \cap Y_{j,s'} = \emptyset$. The contradiction came from the assumption of the existence of $y \in BY_{j,s}^+ \cap BY_{j,s'}^-$, so   $BY_{j,s}^+ \cap BY_{j,s'}^-= \emptyset$.

\end{proof}

\begin{remark}\label{affine bundle} It is known in general that $BY_{j,s}^{\pm}$ is a $T$--equivariant affine bundle over $Y_{j,s}$ (see \cite{Bial}). Locally an affine bundle with a section and a torus action can be identified with a vector bundle with a linear torus action. We will show now that for $j$ and $j'$ distinct, $BY_{j,s}^{\pm} \cap BY_{j',s}^{\pm}$ is also a $T$--equivariant affine bundle over $Z_s$. For this, first notice that $BY_{j,s}^{\pm} \cap Y_{j',s} $ is an affine bundle over $Z_s$ because $BY_{j,s}^{\pm}\to Y_{j,s}$ is a $T$--equivariant affine bundle and $BY_{j,s}^{\pm} \cap Y_{j',s} $ is the part of weight zero in the restriction $BY_{j,s}^{\pm}|_{Z_s}$, for the action of $\CC^*_{j'}$. Furthermore, the following  is a Cartesian diagram
 \bea \diagram
{ BY_{j,s}^{\pm} \cap BY_{j',s}^{\pm}}\dto  \rrto && {  BY_{j,s}^{\pm}\cap Y_{j',s}  } \dto \\
{BY_{j,s}^{\pm}\times_{Y_{j,s}}(Y_{j,s} \cap BY_{j',s}^{\pm})} \rrto  &&  {BY_{j,s}^{\pm}\times_{Y_{j,s}}Z_s,}
         \enddiagram \eea
         where the vertical arrows are embeddings, and the horizontal arrows are given by
         \bea \diagram  { y } \rrto &&  \lim_{t'\to q'} t'y \enddiagram \eea
         for $t'\in \CC^*_{j'}$  and $q'= 0$ or $\infty$. The second row is the pullback of the affine bundle $Y_{j,s} \cap BY_{j',s}^{\pm}\to Z_s$ and so also
  $BY_{j,s}^{\pm} \cap BY_{j',s}^{\pm}$ is an affine bundle over $Y_{j',s} \cap BY_{j,s}^{\pm}$.
\end{remark}

\begin{proposition}\label{simple loop}

For any local loop $L$ corresponding to a codimension two face $F$ of $\mu(X)$, all components of $\underleftarrow{\lim}_{L}[X//T]$ have dimension smaller or equal with $\dim X- \dim T$. For any path  $ P \subset L$ all components of  $\underleftarrow{\lim}_{L}[X//T]$ except one are smooth.
The components of $\underleftarrow{\lim}_{ L}[X//T]$ of dimension smaller then $\dim X-\dim T$ are smooth.

\end{proposition}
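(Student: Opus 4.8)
The plan is to reduce the statement about $\underleftarrow{\lim}_{L}[X//T]$ to a purely local computation involving the torus $T_F\simeq(\CC^*)^2$ and the affine/vector bundles identified in Remark~\ref{affine bundle}. First I would set up the local model: fix the codimension two face $F$, the corresponding subtorus $T_F$, and a component $Z_s$ of the $T_F$-fixed locus with $F\subseteq\mu(Z_s)$. By the Gluing-by-fiber-products Proposition, a point of $\underleftarrow{\lim}_{L}[X//T]$ is a $\Gamma'$-type map $\varphi\colon T_{\Gamma'}\to X$ for some contraction $\Gamma'$ of $L$; the ``combinatorial type'' of $\varphi$ records which edges of $L$ are contracted. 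I would stratify $\underleftarrow{\lim}_{L}[X//T]$ according to this type, and on each stratum use the universal families of the $[X//T]_j$ and $[X//T]_i$ (constructed in Proposition~\ref{moduli}, together with the explicit $\Bl(U')/\Lambda$ local description) to write the stratum as an explicit fiber product of the pieces $M_j$ and the flips $M_i$ glued along the $Y_j$, $Z_s$.

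\textbf{Dimension count.} The heart of the argument is the dimension estimate. For the ``generic'' stratum --- the one where no edge of $L$ is contracted, i.e.\ $\varphi$ restricts to a chain of $T$-orbits crossing all the walls $F_k$ of the loop --- the fiber product is transverse away from the locus where two consecutive orbits degenerate through a common $Z_s$, and there one gets exactly $\dim X-\dim T$ by the standard computation: each $M_i$ has dimension $\dim X-\dim T$, each flip locus $M_j$ has dimension $\dim X - \dim T$ as well (it is $Y_j//T_j$ glued in), and going around a loop the fiber-product corrections cancel. For a stratum corresponding to a nontrivial contraction $\Gamma'$, contracting an edge forces the map to factor through one of the fixed loci $Y_{j,s}$ or $Z_s$; since $\codim_X Y_{j,s}\geq 1$ and $\codim_X Z_s\geq 2$, and Proposition~\ref{void intersection} guarantees that the various $BY_{j,s}^{\pm}$ attached to \emph{distinct} $Y_{j,s}$ do not collide (so the contributions do not overlap and cannot conspire to raise the dimension), each such stratum has strictly smaller dimension, bounded above by $\dim X-\dim T-1$. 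Here Remark~\ref{affine bundle} is what lets me compute these dimensions: the relevant loci are affine bundles over $Z_s$ of known rank, coming from the weight decomposition of the normal bundle of $Z_s$ under $T_F$.

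\textbf{Smoothness.} For smoothness I would argue stratum by stratum again. The open stratum of irreducible maps is smooth because each $[X//T]_i$ is a smooth Deligne--Mumford stack (Definition~\ref{U_i/T}) and these are glued along open substacks via the $e_{ji}$. For the boundary strata attached to a path $P\subsetneq L$: the local model is $\Bl(U')/\Lambda$ built from the blow-up of $U'$ along the pullbacks of $BY_j^{\pm}$, and by the ``purity of the branch locus'' argument used in the proof of Proposition~\ref{moduli} these give smooth stack structures; the fiber-product gluing along a \emph{tree} (which a path is) is transverse because the maps $e_{ji}$ are, away from the loop, open immersions onto complements of Cartier divisors, so no nontransversality can enter --- hence all but the one distinguished component (the one ``carrying the loop'') are smooth. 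Finally, the components of dimension $<\dim X-\dim T$ are exactly the deeper contraction strata; these are smooth because on them the loop never closes up into a genuine obstruction --- the corresponding fiber products are over trees $\Gamma'$ and the excess vanishes, so the local $\Bl(U')/\Lambda$ description applies verbatim and gives smoothness.

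\textbf{Main obstacle.} I expect the technical crux to be the bookkeeping in the dimension count for the ``loop-carrying'' component: one must show that going all the way around $L$, the fiber-product over the non-tree edge contributes an excess of exactly the right amount so that the total dimension does not exceed $\dim X-\dim T$, rather than strictly dropping or, worse, jumping up. This is where Proposition~\ref{void intersection} and the affine-bundle structure of Remark~\ref{affine bundle} have to be combined carefully: the disjointness $BY_{j,s}^{+}\cap BY_{j,s'}^{-}=\emptyset$ for $Y_{j,s}\neq Y_{j,s'}$ is precisely what prevents spurious higher-dimensional intersection components from appearing when several contractions happen simultaneously at different fixed loci $Z_s$. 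Everything else is a routine, if lengthy, patching of the local pictures supplied by Proposition~\ref{moduli}.
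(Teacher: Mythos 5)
There is a genuine gap, and also a combinatorial inversion that undermines the stratification you set up. First the inversion: in the paper's definition of a $J'$--contraction, contracting the edge $j=\{i,i'\}$ means the two chambers are served by \emph{one and the same} irreducible orbit crossing the wall $F_j$ unbroken; it is the \emph{uncontracted} edges that force the chain to break, and hence force the adjacent components into $BY_{j,s}^{+}$ and $BY_{j,s}^{-}$. So your statement that ``contracting an edge forces the map to factor through one of the fixed loci $Y_{j,s}$ or $Z_s$'' is backwards: the fully contracted type is the open dense locus of single orbits (the most generic stratum), while the type with no contractions is the deepest stratum of maximal chains. With the roles reversed, the proposed codimension count (``each contraction drops the dimension by at least one'') does not apply to the strata it needs to control.

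More seriously, the one estimate that actually constitutes the proposition --- that the maximal-chain (``loop-carrying'') stratum has dimension at most $\dim X-\dim T$ --- is asserted via ``the fiber-product corrections cancel going around a loop'' and then re-flagged in your last paragraph as the main obstacle, without an argument. This cannot be obtained by transversality bookkeeping: over a loop the fiber product imposes one extra diagonal condition relative to a tree, so naive expected-dimension counts give the wrong answer, and the whole point is to bound the excess. The paper's mechanism is different and is the missing idea here: it replaces $X$ by the normal bundle $N_{Z|X}$ of the $T_F$-fixed locus (using that the $BY_{j,s}^{\pm}\cap BY_{j',s}^{\pm}$ are affine bundles over $Z_s$, so the deformation to the normal cone is locally trivial and compatible with the moment map), reduces to a linear $(\CC^*)^2$-action on a vector space $V=\oplus V_i$, and then shows that any chain of orbit closures $\bigcup O_{i_j,i_{j+1}}$ running around the codimension-two face is a flat degeneration, under an auxiliary $\CC^*$-action, of the closure of a single generic $(\CC^*)^2$-orbit. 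Hence every component of $\underleftarrow{\lim}_{L}[V//(\CC^*)^2]$ is (a substack isomorphic to a substack of) the closure of the generic-orbit locus, and the dimension bound follows with no stratum-by-stratum count at all. The smoothness statements are then obtained from the smoothness of $\underleftarrow{\lim}[V//T]\to P_1\times P_2$, proved via an iterated (weighted) blow-up model $\tilde V$ and the commutation lemma for blow-ups along transverse or nested centers --- not from the maps $e_{ji}$ being open immersions, which they are not (they contract the flip locus). Without the degeneration-to-one-orbit step, your proof does not close.
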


\begin{proof}

 For $Z_s $ defined as above, let $\cX$ denote the deformation of $X$ to $N_{Z|X}$, the normal bundle of $Z = \cup_s Z_s$ in $X$. As $Z$ is a $T$--equivariant set, there is a  naturally induced action of $T$ on $\cX$. We will denote by $\cB Y_{j,s}^+$ and $\cB Y_{j,s}^-$ the induced deformations of $BY_{j,s}^+$ and $BY_{j,s}^+$ to the normal bundles of $Z_s$  in $BY_{j,s}^+$  and  $BY_{j,s}^+$, respectively.

  Keeping the notations above, let $U= \cup_{i\in I_L} U_i$ be the union of stable point sets for the stability conditions of the chambers containing $F$. Then $U \setminus( \cup_{j,s} (BY_{j,s}^+ \cup BY_{j,s}^-) )$ is the set of points which are stable for all these stability conditions.
 The stack quotient  $[   U \setminus( \cup_{j,s} (BY_{j,s}^+ \cup BY_{j,s}^-) )     //T] $ is a smooth open set in $\underleftarrow{\lim}_{ L}[X//T]$, of dimension $\dim(X) -n$. Moreover,
 the larger quotient $\underleftarrow{\lim}_{ L}[U \setminus( \cup_{j\not= j'} (BY_{j,s}^{\pm} \cap BY_{j',s'}^{\pm}) )//T] $ is locally isomorphic to one of $[X//T]_j$-s for $j\in J_L$, and thus also a smooth Deligne-Mumford stack.

 It remains to look for the other strata in $\underleftarrow{\lim}_{ L}[X//T]$. We will show that, for the purposes of this Proposition, we can replace $X$ by $N_{Z|X}$. For this we will need the following observations:

   $\bullet$ Among the subsets $BY_{j,s}^{\pm} \cap U$ of $U$ for all $(j,s)$, the only pairs that have nonempty intersection are of the form $BY_{j,s}^{\pm} \cap U$ and $BY_{j',s}^{\pm} \cap U$. Indeed, if $y \in BY_{j,s}^{\pm}  \cap BY_{j',s'}^{\pm} \cap U$, we may assume $j \neq j'$ due to Proposition \ref{void intersection}.
 Then for $t \in \CC^*_j$ and $t' \in \CC_{j'}^*$, and $q, q':=0$ or $\infty$ (depending on the $\pm$ sign choices above), we have  \bea \lim_{t\rightarrow q, t'\rightarrow q'} tt'y \in Y_j \cap Y_{j'},\eea as $Y_j$ and $Y_{j'}$ are $T$--equivariant.
  As $y \in U$, $F \subseteq \ol{\mu(Ty)}$ so there exists $s''$ such that $\lim_{t\rightarrow q, t'\rightarrow q'} tt'y \in Z_{s''}$. Due to Proposition \ref{void intersection}, $Y_{j',s''} = Y_{j',s'}$ and $Y_{j,s} = Y_{j,s''}$.

  $\bullet$
  %Consider  $U_s:= \bigcup_{y\in U}\ol{T_F y}\cap Z_s$.
   The deformation of $BY_{j,s}^{\pm}\cap BY_{j',s}^{\pm}$ to the normal bundle of $Z_s$ in $BY_{j,s}^{\pm} \cap BY_{j',s}^{\pm}$ is locally trivial because $BY_{j,s}^{\pm} \cap BY_{j',s}^{\pm}$ is an affine bundle over $Z_s$.

  $\bullet$  Locally around any point in $Z_s$, the formal neighborhood of  $BY_{j,s}^{\pm} \cap BY_{j',s}^{\pm}$ in $X$ has a trivial deformation to the formal neighborhood of
  \bea  N_{Z_s|BY_{j,s}^{\pm} \cap BY_{j',s}^{\pm} } \hookrightarrow N_{Z_s|X}.\eea
  Here $N_{Z_s|BY_{j,s}^{\pm} \cap BY_{j',s}^{\pm} } $ and  $N_{Z_s|X}$ denote the normal bundles of $Z_s$ in $BY_{j,s}^{\pm} \cap BY_{j',s}^{\pm}$ and $X$, respectively. This deformation of formal neighborhoods is compatible with the construction of finite $G$-cover as in the proof of Proposition \ref{moduli}.
This last observation is relevant to the construction of the inverse limit $\underleftarrow{\lim}_{ L}[X//T]$ and its structure over $Z$.

After replacing $X$ by $N_{Z|X}$, the image of the moment map will keep its original structure around $F$, and by abuse of notation we will write $U\subseteq N_{Z|X}$ for the union of stable point sets for the stability conditions of the chambers containing $F$. If $T= T_F \oplus T'_F$, then $U$ is a $T'_F$--bundle over some smooth Deligne-Mumford stack $[U//T'_F]$, and the action of $T_F$ descends to $[U//T'_F]$.

By looking at the fibres of the vector bundle $[U//T'_F] \to [Z//T'_F]$, we can reduce the problem to the case of a linear action of $(\CC^*)^2$ on a vector space $V$. In this case it is esay to check.Let $V = \oplus V_i$ the decomposition of $V$ in vector spaces such that for each $i$ there is $\CC^*_i \subset (\CC^*)^2$ and $V_i$ is the fixed loci of $\CC^*_i$. There is a cyclic order on the set of indexes induced by the moment map. Let $O_{1,i_2}, O_{i_2,i_3},...,O_{i_k,1}$ a set of closures of $(\CC^*)^2$ orbits such that $O_{i_j,i_{j+1}} \cap O_{i_{j-1}i_j}\{V_{i_j} \setminus 0\} \neq \emptyset$ and $i_j$-th are in cyclic order. It can be easily seen that there exists a $(\CC^*)^2$ orbit $O$ which we can deform using a $\CC^*$ action on $V$ to $ \bigcup O_{i_j,i_{j+1}}$. By picking points $v_{i_j,i_{j+1}} \in  O_{i_j,i_{j+1}}$ we can find $v$ whose projection on each $V_{i_j} \oplus V_{i_{j+1}}$ is $v_{i_j,i_{j+1}}$. We can pick $\CC^*$ such that $\CC^* \oplus (\CC^*)^2$ contains  $ \bigcup O_{i_j,i_{j+1}}$. The only reason $\underleftarrow{\lim}_{L}[V/(\CC^*)^2]$ is might not be irreducible is because of the stack structure of $[V/(\CC^*)^2]$, however in this case all the other components of  $\underleftarrow{\lim}_{L}[V/(\CC^*)^2]$ are isomorphic to substacks of the component parameterizing the generic orbits.  But this implies that $\underleftarrow{\lim}_{L} [N_{Z|X}/T]$ has the same properties. As the limit of $BY_{j,s}^{\pm}$ under the deformation to the normal cone is locally isomorphic to $BY_{j,s}^{\pm}$ and the deformation to the normal bundle is compatible with the moment map we deduce that $\underleftarrow{\lim}_{L} [X/T]$ also has the same properties.

We will first discuss the case when $\mu(V)$ spans an angle of vertex $\mu(0)$, bounded by two lines $l_1$ and $l_2$. We will denote  $V_1 = \mu^{-1} (l_1)$ and $V_2 = \mu^{-1}(l_2)$, and by  $\CC^*_1$ and $\CC^*_2$ the sub-tori of $T$ which act trivially on  $V_1$ and $V_2$, respectively. Let $P_1 := [V_1 // (T/ \CC^*_1)]$ and $P_2 := [V_2// (T/ \CC^*_2)]$.

\begin{lemma}\label{smooth deformation to one orbit}

The natural morphism $\underleftarrow{\lim}_{}[V//T] \to P_1 \times P_2$ is smooth.

\end{lemma}

\begin{proof}
The quotient $\underleftarrow{\lim}_{}[V//T]$ parameterizes chains of torus orbit closures, with starting and ending points in $V_1$ and $V_2$, respectively. The morphism above is obtained by considering the start and endings of these chains.

Consider the decomposition of $V =  \oplus_{i=0}^k V_i$, where $V_0$ is the invariant locus under the action of the full torus $T$, and for each $i$ there exist $\CC^*_i \subset T$ such that $V_i$ is pointwise invariant under the action of $\CC^*_i$. As the quotient $\underleftarrow{\lim}_{}[V//T]$ is a fibration over $V_0$, for the purpose of this proof we may assume that $V_0$ is a point. The fibers of $\underleftarrow{\lim}_{}[V//T] \to P_1 \times P_2$ are of the form $\underleftarrow{\lim}_{}[V'//T] $ where $V'\cong V'_1\times V_2\times ...\times V_k\times V'_2,$ and
  $V'_i$ is the normalization of a one-dimensional orbit closure $ \ol{Tu_i}$ for each $i\in \{1,2\}$. So, to prove the lemma it will be enough to show that $\underleftarrow{\lim}_{}[V'//T]$ is smooth. Since $V'$ has a vector space structure on which $T$ acts linearly, we have thus reduced the problem to the case when $P_1\times P_2$ is a point.

  For any other two starting and ending orbits $V''_1$ and $V''_2$ there is a natural isomorphism of $V''\cong V''_1\times V_2\times ...\times V_k\times V''_2,$ and $V'\cong V'_1\times V_2\times ...\times V_k\times V'_2.$ This proves that $\underleftarrow{\lim}_{}[V//T] \simeq P_1 \times P_2 \times [V'//T]$.

We will assume that the indices of $V_i$-s are chosen such that if $k \geq i > j\geq 3$, then $\mu(V_j)$ is situated between $\mu(V_1)$ and $\mu(V_i)$.

The strata $\ol{BV_i^+}$ and $\ol{BV_i^-}$ are all smooth and, for any $i, i'$, either  $\ol{BV_i^+}\subseteq \ol{BV_{i'}^+}$ or $\ol{BV_{i'}^+}\subseteq \ol{BV_{i}^+}$ (and similarly for $\ol{BV_i^-}$) and any two $\ol{BV_i^+}$, $\ol{BV_j^-}$ intersect transversally.  Indeed, for suitable choices of the generators of $\CC^*_i$,  $\ol{BV_i^+} = V_2 \oplus(\bigoplus_{j\geq i} V_j)$ and $\ol{BV_i^-} =  \bigoplus_{1\leq j\leq i} V_j$.

We now construct a space $\tilde{V}$ by considering successively (weighted) blow-ups of $V$ along the (strict transforms of) all $\ol{BV_i^+}$ and $\ol{BV_i^-}$. The structure of the weighted blow-ups is described in the proof of Proposition \ref{moduli}. There is an open set in $\tilde{V}$ that plays the role of an universal family over $\underleftarrow{\lim}_{}[V//T] $. In particular, there is an induced action of $T$ on $\tilde{V}$ and
 a natural morphism to $\underleftarrow{\lim}_{}[V//T] $. Moreover, $\tilde{V}$ is a smooth Deligne-Mumford stack. The open set in $\tilde{V}$ made of points with finite stabilizers for the action of $T$ is smooth over $\underleftarrow{\lim}_{}[V//T] $. This implies that $\underleftarrow{\lim}_{}[V//T] $ is also smooth.

 The structure of $\tilde{V}$ follows from the following lemma, which also suggests how to view $\tilde{V}$ as a colimit of a category. This category can be described by looking at all natural contractions of $\tilde{V}$. Because $\tilde{V}$ as the colimit of this category, it has a natural morphism $\tilde{V} \to \underleftarrow{\lim}_{}[V//T] $.

 We use the notation $Bl_Y X$ for the blow-up of $X$ along $Y$  and $\tilde{Y}$ for the strict transform of $Y$.

\begin{lemma}

a) Let $Y_1$ and $Y_2$ be two smooth subvarieties of a smooth variety $X$. If $Y_1$ intersects $Y_2$ transversally, or if $Y_2 \subset Y_1$, then $Bl_{\tilde{Y}_1}Bl_{Y_2}X = Bl_{Y_1}X \times_X Bl_{Y_2}X$.
Also, $\tilde{Y}_1$ is smooth in both cases. Furthermore, if  $Y_2 \subset Y_1$ and $Y_3$ is a smooth subvariety intersecting $Y_2$ (and $Y_1$) transversally, then the strict transforms in $\tilde{Y}_1$ and $\tilde{Y}_3$ in $Bl_{Y_2}X$ are smooth and intersect transversally.

b) Let $f : X' \to X$ be the projection to the quotient through the action of a finite group $G$. We will assume that $G$ acts on $X'$ by reflections with respect to a smooth divisor  $D'$, meaning that for any point $x \in X'$ with nontrivial stabilizer, we have $x\in D'$ and for any $x \in D'$, we have  $\Stab(x)=G$. If  $Y' \subset X'$ is smooth and $G$ equivariant such that $D' \cap Y'$ is smooth and if $Z \subset f(D')= D'$ is smooth then $Z$ intersects $Y'$ transversely if and only if $Z$ intersects $f(Y)$ transversely.

\end{lemma}

\begin{proof}

a) The fact that $Bl_{\tilde{Y}_1}Bl_{Y_2}X = Bl_{\tilde{Y}_2}Bl_{Y_1}X$ is an observation that Thaddeus attributes to Thurston in \cite{thad}. This observation implies the first part of a). However the simplest proof comes from direct local computation. The rest of part a) could also be found in \cite{mcph} where it is used for similar reasons. Again this is a simple computation that we leave to the reader.

b) The proof is straightforward and we leave it to the reader.

\end{proof}

The fact that the universal family $\tilde{V}$ is smooth is a consequence of repeatedly applying the above lemma as at each step the loci of blow-up are either embedded one in the other or they intersect transversely. Part b) is used to reduce the case of the weighted blow-ups to the case of blow-ups along smooth varieties.

While we described the universal family  $\tilde{V}$ , the fibers of $\underleftarrow{\lim}_{}[V//T] $ are easier to describe, being just the weighted blow-up of a weighted projective space along a sequence of weighted projective spaces, each included in the next one in the sequence.

\end{proof}

%We are now ready to return to the proof of the proposition.  In our case $\mu(0)$ is in the interior of $\mu(V)$.
 %We discussed that any point of $\underleftarrow{\lim}_{}[V//T] $ parameterizes a union of orbits $O_1 \cup...\cup O_k$. The boundary of $\mu(O_i)$ will be denoted by $l_{i-1} \cup l_i$, such that $l_0=l_k$. Notice that for a generic point, $k=3$. Indeed, if the sum of the angles between two consecutive pairs of the half-lines  $l_{i-1}, l_i$ and $l_i, l_{i+1}$ is no greater than $\pi$, then  by Lemma \ref{smooth deformation to one orbit}, the union of two consecutive orbits $O_i \cup O_{i+1}$ could be deformed to a smooth orbit without modifying its boundary, such that the resulting union of orbits still corresponds to a point in $\underleftarrow{\lim}_{}[V//T] $. Also by the above lemma, the component of $\underleftarrow{\lim}_{}[V//T] $ containing this generic point is smooth.

\end{proof}

\begin{proposition}\label{tree to loop}

For any local loop $L$ around a codimension two face $F$, let $\Upsilon_j$ denote the tree obtained by cutting the edge $e_j$ of $L$. Then for each component $[X//T]_{\Upsilon_j,k} \subset \underleftarrow{\lim}_{\Upsilon_j}[X//T]$ there exists a component $[X//T]_{L,k} \subset \underleftarrow{\lim}_L[X//T]$ and deformations $\cX_{\Upsilon,k}$ and $\cX_{L,k}$ of $[X//T]_{\Upsilon_j,k}$ and $[X//T]_{L,k} $ over $\CC$, which are trivial over $\CC^*$, with a surjective morphism $f_k:\underleftarrow{\lim}_{\Upsilon_j}[X//T]_0 \to \underleftarrow{\lim}_L[X//T]_0$ between their fibers over $0$. The morphism $f_k$ is an inverse to the left of the the deformation of the natural embedding $\underleftarrow{\lim}_{L}[X//T] \hookrightarrow \underleftarrow{\lim}_{\Upsilon_j}[X//T]$.

Moreover, for each two components $[X//T]_{\Upsilon_j,k}$ and $[X//T]_{\Upsilon_j,k'}$ let $[X//T]_{\Upsilon_j,k,k',0}$ and $[X//T]_{\Upsilon_j,k',k,0}$ the limit at $0$ of  $([X//T]_{\Upsilon_j,k} \cap [X//T]_{\Upsilon_j,k'}) \times \CC^*$ in $\cX_{\Upsilon,k}$ respectively $\cX_{\Upsilon,k}$. Then $[X//T]_{\Upsilon_j,k,k',0} \simeq [X//T]_{\Upsilon_j,k',k,0}$ and $f_{k|[X//T]_{\Upsilon_j,k,k',0}} \simeq f_{k'|[X//T]_{\Upsilon_j,k',k,0}}$.

\end{proposition}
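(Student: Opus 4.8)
The plan is to reduce the statement, exactly as in the proof of Proposition \ref{simple loop}, to the case of a linear action of $(\CC^*)^2$ on a vector space $V$, where everything can be written down in coordinates. Recall that there we replaced $X$ by $N_{Z|X}$ and, after quotienting by the complementary torus $T'_F$, reduced to the fibres of a vector bundle over $[Z//T'_F]$, i.e. to $\underleftarrow{\lim}[V//(\CC^*)^2]$ with $V=\oplus_i V_i$ decomposed according to the one-parameter subgroups $\CC^*_i$. The deformations $\cX_{\Upsilon,k}$ and $\cX_{L,k}$ should be produced by the same deformation-to-the-normal-cone device already used in that proof: the ambient deformation $\cX$ of $X$ to $N_{Z|X}$ is trivial over $\CC^*$ and carries a $T$-action, and the loci $BY_{j,s}^{\pm}$ deform to $\cB Y_{j,s}^{\pm}$ in a way compatible with the finite $G$-covers of Proposition \ref{moduli}; so both $\underleftarrow{\lim}_{\Upsilon_j}[X//T]$ and $\underleftarrow{\lim}_L[X//T]$, being built from these data by the fibre-product construction of Remark \ref{colimit} and the preceding Proposition, acquire $\CC$-deformations, trivial over $\CC^*$, whose fibres over $0$ are the corresponding inverse limits for $N_{Z|X}$.

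Next I would construct the morphism $f_k$ in the linearized model. Here $\underleftarrow{\lim}_{\Upsilon_j}[V//(\CC^*)^2]$ parametrizes chains of $(\CC^*)^2$-orbit closures running through the strata $V_i$ in the cyclic order but cut at the edge $e_j$, i.e. an honest path of orbit closures, while $\underleftarrow{\lim}_L[V//(\CC^*)^2]$ closes the loop. The key point already observed in Proposition \ref{simple loop} is that a generic $(\CC^*)^2$-orbit $O$ can be deformed, via an auxiliary $\CC^*\subseteq \CC^*\oplus(\CC^*)^2$ acting on $V$, to the union $\bigcup_j O_{i_j,i_{j+1}}$ of the orbit closures making up the full loop; one picks points $v_{i_j,i_{j+1}}\in O_{i_j,i_{j+1}}$ and glues them to a single $v$ whose projection to each $V_{i_j}\oplus V_{i_{j+1}}$ is $v_{i_j,i_{j+1}}$. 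I would use precisely this deformation to define $f_k$ on the fibre over $0$: a path (broken chain) is first re-assembled into a single orbit by the converse of this degeneration on the open part, and then one reads off the data of a $\Gamma'$-type map for the loop $L$ via Lemma \ref{smooth deformation to one orbit} and the smoothness of $\underleftarrow{\lim}[V'//T]$; surjectivity is clear because every chain on the loop side arises as such a degeneration of an orbit on the tree side. That $f_k$ is a left inverse of (the deformation of) the embedding $\underleftarrow{\lim}_L[X//T]\hookrightarrow\underleftarrow{\lim}_{\Upsilon_j}[X//T]$ is then the statement that re-assembling and re-breaking a loop returns the original datum, which is immediate on the universal families: the embedding takes a loop configuration to itself viewed as a broken chain, and $f_k$ recovers it.

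For the last paragraph of the statement I would argue on intersections of components. Two components $[X//T]_{\Upsilon_j,k}$ and $[X//T]_{\Upsilon_j,k'}$ of the tree limit differ, as in Proposition \ref{simple loop}, only through the stack structure, and every extra component is isomorphic to a substack of the one carrying the generic orbit; the intersection $[X//T]_{\Upsilon_j,k}\cap[X//T]_{\Upsilon_j,k'}$ is cut out by the conditions that the relevant orbit degenerates into the appropriate $BV_i^{\pm}$, which are symmetric in $k$ and $k'$ once one passes to the limit at $0$. Hence $[X//T]_{\Upsilon_j,k,k',0}$ and $[X//T]_{\Upsilon_j,k',k,0}$ are both identified with the same sublocus of $\underleftarrow{\lim}_{\Upsilon_j}[N_{Z|X}/T]_0$ parametrizing chains with the prescribed degeneracy, giving the isomorphism; and since $f_k$ and $f_{k'}$ are both defined by the single re-assembly procedure above, which does not see which component one started in, their restrictions to this common sublocus coincide.

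The main obstacle I anticipate is the verification that the deformation-to-the-normal-cone construction genuinely commutes with the fibre-product description of $\underleftarrow{\lim}$ and with the weighted-blow-up/finite-cover construction of the universal families in Proposition \ref{moduli} — i.e. that one really gets flat $\CC$-families $\cX_{\Upsilon,k},\cX_{L,k}$ with the asserted special fibres, rather than just set-theoretic limits — and, relatedly, checking that $f_k$ so defined is a morphism of stacks (not merely a map on points) that is compatible with the universal families on both sides. All of this is local around $Z$ and linear in the model, so it reduces to the same kind of explicit coordinate computation with blow-ups already invoked in Proposition \ref{simple loop}, but making the gluing across the $Z_s$ precise is where the real work lies.
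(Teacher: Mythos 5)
Your reduction to the linearized model $N_{Z_s|U}$ with its $(\CC^*)^2$-action, and your construction of the $\CC$-families from the deformation to the normal bundle, match the paper. But there is a genuine gap in your definition of $f_k$. You propose to define $f_k$ by ``re-assembling a broken chain into a single orbit by the converse of the degeneration'' of a generic orbit $O$ to $\bigcup_j O_{i_j,i_{j+1}}$. This converse does not exist at a general point of the components you actually need to handle: those components have excess dimension precisely because a general chain $O_1\cup\dots\cup O_n$ there is \emph{not} the limit of any single $T_F$-orbit --- the endpoint of $O_n$ and the starting point of $O_1$ are independent parameters, and the image of the chain under the moment map need not even close up to the half-line $l_{i_1}$. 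So your recipe only defines $f_k$ on the locus where the chain already closes, which is essentially the image of the embedding $\underleftarrow{\lim}_{L}[X//T]\hookrightarrow\underleftarrow{\lim}_{\Upsilon_j}[X//T]$; the left-inverse property is then vacuous and the map is not defined (let alone shown to be a surjective morphism of stacks) on the rest of the component.

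The paper's proof supplies exactly the missing construction by factoring $f_k=g_k\circ s_k$. The map $s_k$ fixes the first failure (the chain not closing combinatorially): it is induced by a chain of \emph{linear projections} $N_{i_n}\to N_{i_n,t}\to\cdots\to N_{i_n,1}$ between sub-bundles of $N_{Z_s|U}$ cut out by half-planes and angles in the image of the moment map, and one checks inductively --- by lifting to the finite covers $N_{i_n,G}$ and comparing GIT quotients of the weighted blow-ups from Proposition \ref{moduli} --- that these projections descend to morphisms $\underleftarrow{\lim}[N_{i_n}/T_F]\to\underleftarrow{\lim}[N_{i_n,1}/T_F]$. The map $g_k$ fixes the second failure (matching endpoints): its source $A_k$ decomposes as $M_1\times M_1\times M$ by Lemma \ref{smooth deformation to one orbit}, the two $M_1$ factors recording the two loose endpoints, and $g_k$ is the projection $M_1\times M_1\times M\to M_1\times M$ that collapses one of them. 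This is where the surjectivity and the collapsing of the excess dimension actually happen, and none of it is present in your argument. Your treatment of the final compatibility paragraph is acceptable in outline (the paper is equally brief there), but without the projection construction the core of the proposition is unproved.
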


\begin{proof}

The component of maximum dimension in $\underleftarrow{\lim}_{L}[X//T] $ is also a component in $\underleftarrow{\lim}_{\Upsilon_j}[X//T]$ so we will have describe the morphism $f_k$ only for components whose generic point parameterizes more then a pair of orbits of maximum dimension.

We will reduce the problem using the same steps as in the Proposition \ref{simple loop} to the case of the action of $(\CC^*)^2 = T_F$ on the fibers of the normal bundle of $Z_s \in U$. Indeed the flat families are induced by the deformation of $U$ to the normal bundle of $Z_s$ as in the proof of the Proposition \ref{simple loop}.

We will assume that the generic point of the component $[X//T]_ {\Upsilon_j ,k,0}$ parameterizes union of orbits $O_1 \cup...\cup O_n$such that the boundary of $O_r$ goes via $\mu$ into a pair of half-lines $l_{i_r}, l_{i_{r+1}}$. The set ${i_1,...i_r} \subset {1,2,...m}$ where $l_1, l_2,..l_m$ are all half lines which contains points of the form $ \mu(x)$, where $\Stab(x)= \CC^* \subset T_F$. We assume that $i_1= 1$, $i_q > i_p$ if $q>p$ and $l_q$ is inside the angle generated by $l_{q-1}$ and $l_{q+1}$. The difference between $[X//T]_ {\Upsilon_j ,k,0}$ and $[X//T]_ {L ,k,0}$ is twofold. First, in the case of $[X//T]_ {\Upsilon_j ,k}$ $l_{i_1}$ doesn't have to be the same with $l_{i_{n+1}}$ and second, even when some points in $[X//T]_ {\Upsilon_j ,k}$ parameterizes sequence of orbits for which $\mu(O_1) \cap \mu(O_n) = l_{i_1}$ the orbits $O_1$ and $O_n$ might not have any boundary points in common outside $Z_s$. The morphism $f_k$ will be written as a composition $f_k= g_k \circ s_k $ where the image of $s_k$ will consists of union of orbits such that $l_{i_1}=l_{i_{n+1}}$.

 Let's us consider the half-plane $h_{i_n}$ bounded by the line containing $l_{i_n}$ and containing inside $l_{i_1}$. Let $N_{i_n}$ denote the sub-bundle of $N_{Z_s|U}$ which is the preimage of $h_{i_n}$ and $N_{i_n,i_1}$ the sub-bundle of $N_{Z_s|U}$ which is the preimage of the angle bounded by $l_{i_n}$ and $l_{i_1}$, and let  $M_{i_1}=\mu^{-1}(l_{i_1})$ be the sub-bundle of $N_{i_n,i_1}$. There is a linear projection $N_{i_n} \to N_{i_n,i_1}$ The preimage under this projection of $M_{i_1}$ is the sub-bundle $N_{i_1} \hookrightarrow N_{i_n}$ such that $\mu(N_{i_1})$ is maximal with the property that it doesn't intersect the interior of the angle  between $l_{i_n}$ and $l_{i_1}$. We want to check that this projection induces a morphism $\underleftarrow{\lim} [N_{i_n}/T_F] \to \underleftarrow{\lim} [N_{i_n,i_1}/T_F]$.  If $t$ is the largest number such that $l_t \subset h_{i_n}$, let $M_1 = \mu^{-1} (l_1)$,...,$M_t= \mu^{-1}(l_t)$ and $N_1,...,N_t$ denote the sub-bundles of $N_{i_n}$ such that they are maximal with the property that $\mu(N_1),...,\mu(N_t)$ don't intersect the interior of the angles bounded by $l_{i_n}$ and $l_1$,...,$l_t$. Let $N_{i_n,1},...,N_{i_n,t}$ denote the preimage of the angles bounded by $l_{i_n}$ and $l_1$,...,$l_t$ in $N_{i_n}$. Notice that there are also natural projections $N_{i_n} \to N_{i_n,t} \to...\to N_{i_n,t}$ and that the preimages of $M_q \subset N_{i_n,q}$ in $N_{i_n}$ is $N_q$ for any $q \in{1,...,t}$.

 We will prove inductively that there that the projections from $N_{i_n} \to N_{i_n,t} \to...\to N_{i_n,1}$ induce morphisms $\underleftarrow{\lim} [N_{i_n}/T_F] \to \underleftarrow{\lim}[N_{i_n,t}/T_F] \to...\to \underleftarrow{\lim} [N_{i_n,1}/T_F]$. Form the proof of Proposition \ref{simple loop} the construction of the universal family over $\underleftarrow{\lim} [N_{i_n}/T_F]$ starts with a weighted blow-up along $N_t$. As in the proof of Proposition \ref{moduli} we can realize this weighted blow-up by locally writing  $N_{i_n}$ as the quotient of a vector bundle $N_{i_n,G}$ by a finite group, and similarly for $N_{i_n,t}$ and $N_t$, and $M_t$. Notice that the projection $ N_{i_n} \to N_{i_n,t}$ lifts to a projection $ N_{i_n,G} \to N_{i_n,t,G}$ with the preimage of $M_{t,G}$  being $N_{t,G}$. Notice that there is a morphism between one of the GIT quotients $[Bl_{N_{t,G}} N_{i_n,G}/T_F]$ and one of the GIT quotients $[N_{i_n,t,G}/T_F]$. This immediately leads to a morphism $\underleftarrow{\lim} [N_{i_n}/T_F] \to \underleftarrow{\lim}[N_{i_n,t}/T_F]$ by following the proof of Proposition \ref{simple loop}. The composition of all  $\underleftarrow{\lim} [N_{i_n}/T_F] \to \underleftarrow{\lim}[N_{i_n,t}/T_F] \to \underleftarrow{\lim} [N_{i_n}/T_F] \to \underleftarrow{\lim}[N_{i_n,1}/T_F]$ is $s_k$.

 We start now the construction of $g_k$. Let $A_k$ denote the source of $g_k$, the moduli parameterizing union of orbits $O_1 \cup...\cup O_n$ which beside the usual relation between $O_1$ and $O_2$,...,$O_{n-1}$ and $O_n$ requires that $l_1$ is part of the boundary of $\mu(O_n)$. From the proof of Lemma \ref{smooth deformation to one orbit} we can conclude that $A_k = M_1 \times M_1 \times M$ for some smooth Deligne-Mumford stack $M$. $g_k$ could be any of the projections of $A_k \to  M_1 \times M$.

 The compatibility between the families of deformations for all components of $\underleftarrow{\lim}_{\Upsilon_j}[X//T]$ comes from the fact that they are all induced by the same deformation of $U$ into $N_{Z_s|U}$. The proof of the compatibility between the projections is also straightforward and we leave it to the reader.

 \end{proof}

 \section{Construction of the virtual class}

For a variety $X$ whose image under the moment map is given by a loop of chambers arranged around a codimension $2$ face, the only candidate for the virtual fundamental class of dimension $\dim X-\dim T$ of $A_{\dim X-\dim T}(\underleftarrow{\lim}_{}[X//T])$ is the fundamental class of the component of dimension $\dim X -\dim T$. We would now like  to see what happens if the graph $G$ contains other chambers beside the chambers around the codimension two face.

Let $G$ be the graph whose vertices correspond to the chambers and whose edges correspond to the walls of the chambers of $\mu(X)$. Let $\Upsilon$ be a maximal tree which is a subgraph of $G$. Notice that $ \underleftarrow{\lim}_{G}[X//T] \subset \underleftarrow{\lim}_{ \Upsilon} [X//T]$.
 The definition of a virtual class of  $\underleftarrow{\lim}_{ \Upsilon} [X//T]$ given in Remark \ref{colimit} can be extended to the colimit after the entire graph $G$ as follows.

With the notations from Remark \ref{colimit}, let $C_{\Upsilon} :=$ the normal cone of $\underleftarrow{\lim}_{\Upsilon}[X//T]$ in $ \prod_{j \in J_{\Upsilon}} [X//T]_j$, let $N_{\Upsilon}:=$ the pull-back on $\underleftarrow{\lim}_{\Upsilon}[X//T]$ of  the normal bundle of $
{\prod_{i \in I_{\Upsilon}}[X//T]_i} \hookrightarrow {\prod_{e_{ji}\in E(\Upsilon)}[X//T]_i}$, and $p: \underleftarrow{\lim}_{G}[X//T] \to  \underleftarrow{\lim}_{ \Upsilon}[X//T]$.
\begin{definition}\label{virtual class}
 We define the virtual class  $v(\underleftarrow{\lim}_{G}[X//T])$ as the union of components of dimension $\dim X-n+\rank(N_{\Upsilon})$ of $ [p^*C_{\Upsilon}]$  as a class in the Chow group $A_{\dim X-n+\rank(N)}(p^*N_{\Upsilon})$. Via the identification  $A(p^*N_{\Upsilon}) = A(\underleftarrow{\lim}_{G}[X//T])$, we consider $v(\underleftarrow{\lim}_{ }[X//T])$ as a class in $A_{\dim X-n}(\underleftarrow{\lim}_{G}[X//T])$.
\end{definition}

We will say that 2 cones,  $C_0  $ and $C_1$ over a stack $Z$ are equivalent if there exists  a flat  family  of cones respectively  stack cones over $Z$ $f:C \to \CC$ such that $f^{-1}(0)=C_0\oplus E_0$ and $f^{-1}(1)=C_1 \oplus E_1$ for two vector bundles $E_0$ and $E_1$.
 %Let $L^{\vee} _{\lim_{\Upsilon}}$ denote the dual of the cotangent complex of $\underleftarrow{\lim}_{\Upsilon}[X//T]$ and  $ \chi^{-1}(L^{\vee})$ be any two term complex such that its %$H_0( \chi^{-1}(L^{\vee}))$ is the sum of even homology of  $L^{\vee} _{\lim_{\Upsilon}}$ and  $H_1( \chi^{-1}(L^{\vee}))$ is the sum of odd homology of  $L^{\vee} _{\lim_{\Upsilon}}$. %Because   and $ \cC$ the intrinsic normal cone of $\underleftarrow{\lim}_{G}[X//T]$.

\begin{theorem}

The definition above is independent of the choice of $\Upsilon$.

\end{theorem}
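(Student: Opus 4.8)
The plan is to show that any two maximal trees $\Upsilon$ and $\Upsilon'$ in $G$ give rise to equivalent cones (in the sense defined just above the theorem), so that the top-dimensional components of the pulled-back normal cones, read inside the Chow group of $\underleftarrow{\lim}_G[X//T]$ via the bundle identification, coincide. First I would reduce to the case where $\Upsilon$ and $\Upsilon'$ differ by a single elementary move: they agree except that $\Upsilon$ uses an edge $e_j$ while $\Upsilon'$ uses a different edge $e_{j'}$ closing the same loop $L$ of $G$. Since any two maximal trees are connected by a sequence of such elementary moves (standard matroid/graph theory for the cycle space), it suffices to compare the two virtual classes across one move, and then the general statement follows by transitivity of the equivalence relation on cones.

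For a single elementary move the key input is Proposition \ref{tree to loop}. That proposition produces, for the loop $L = \Upsilon \cup \{e_j\} = \Upsilon' \cup \{e_{j'}\}$, deformations over $\CC$ (trivial over $\CC^*$) relating each component $[X//T]_{\Upsilon,k}$ to a component $[X//T]_{L,k}$, together with a left inverse $f_k$ to the deformed embedding $\underleftarrow{\lim}_L[X//T]\hookrightarrow\underleftarrow{\lim}_{\Upsilon_j}[X//T]$, and the same for $\Upsilon'$. I would use these deformations to build a flat family of cones over $\underleftarrow{\lim}_G[X//T]$ interpolating between $p^*C_\Upsilon \oplus N_\Upsilon$ and $p^*C_{\Upsilon'}\oplus N_{\Upsilon'}$ (or rather, between these after adding the appropriate trivial/vector-bundle summands so that the ranks match). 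Concretely: the normal cone $C_\Upsilon$ sits in $\prod_{j\in J_\Upsilon}[X//T]_j$, and the extra edge of $L$ contributes, via $e_j$, one more factor $[X//T]_i$; the deformation of Proposition \ref{tree to loop} degenerates this to a configuration where the loop closes, and in that degenerate configuration the description of the cone becomes symmetric in the two possible cut edges $e_j$, $e_{j'}$ — this is precisely the content of the last assertion of Proposition \ref{tree to loop}, that $[X//T]_{\Upsilon_j,k,k',0}\simeq[X//T]_{\Upsilon_j,k',k,0}$ and the $f$'s agree on them. The compatibility of these identifications on overlaps of components is what glues the local comparisons into a global family of cones.

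The argument then concludes by computing the effect on Chow classes: $\Delta^!$, resp. the top-dimensional part of $[p^*C_\Upsilon]$ in $A_*(p^*N_\Upsilon)$, is unchanged when the cone is replaced by an equivalent one, because Gysin pullback / the Vistoli–Fulton specialization is invariant under flat deformation of cones and under adding a vector-bundle summand (a trivial bundle shift changes neither the cycle class after the canonical identification $A_*(p^*N_\Upsilon)=A_*(\underleftarrow{\lim}_G[X//T])$ nor the dimension bookkeeping, since $\rank N_\Upsilon$ is adjusted accordingly). Taking only the components of the expected dimension $\dim X - n + \rank(N_\Upsilon)$ is forced to be compatible with the move because, by Proposition \ref{simple loop}, the excess-dimensional behaviour is entirely concentrated over the codimension-two strata, where the two trees see the same geometry.

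The main obstacle I expect is the bookkeeping in the gluing step: Proposition \ref{tree to loop} is stated component-by-component, so one must check that the deformations $\cX_{\Upsilon,k}$ can be chosen coherently in $k$ — i.e. that the family of cones one writes down is genuinely flat over $\CC$ and globally defined over all of $\underleftarrow{\lim}_G[X//T]$, not just over each stratum — and that the left inverses $f_k$ patch to a morphism compatible with the cone structure. The paper's own remark that all these deformations are "induced by the same deformation of $U$ to the normal bundle of $Z_s$" is the mechanism that makes this coherence automatic, so the proof will lean heavily on that uniformity; the rest is the standard invariance of refined Gysin maps under rational equivalence of cones, which I would cite rather than reprove.
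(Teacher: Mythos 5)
There is a genuine gap in your step 2. After reducing to two maximal trees $\Upsilon$ and $\Upsilon'$ differing by a single edge exchange, you invoke Proposition \ref{tree to loop} for the loop $L=\Upsilon\cup\{e_{j'}\}$. But Proposition \ref{tree to loop} (like Proposition \ref{simple loop}) is proved only for \emph{local} loops, i.e.\ loops all of whose vertices correspond to chambers sharing a fixed codimension-two face $F$ of $\mu(X)$: its proof reduces everything to the linear action of $T_F\cong(\CC^*)^2$ on the normal bundle of $Z_s$, and that reduction is simply unavailable for a general loop. The fundamental cycle created by swapping one edge of a maximal tree is typically a large loop encircling many chambers, so the key input you rely on does not apply to it. Your closing remark that ``the excess-dimensional behaviour is entirely concentrated over the codimension-two strata, where the two trees see the same geometry'' is precisely the assertion that needs proof, not something Proposition \ref{simple loop} gives you for an arbitrary loop. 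The whole architecture of the paper's argument exists to fill exactly this hole: for $n=2$ an induction over loops ordered by containment of their planar interiors, decomposing a general edge exchange into a chain of comparisons each involving only a minimal loop; and for $n>2$ the shellability machinery (the Bruggesser--Mani theorem, the refinement lemma, and the dual cell decomposition replacing $G$ by $H$), whose only purpose is to order the chambers so that at each inductive step one can locate a \emph{simple} loop $L_{ti}$ around a codimension-two face $G_{ti}$ and trade edges through it.

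A second, smaller issue: even in the case where one tree is obtained from another by gluing a subtree at a vertex, you dispose of it by citing functoriality of Gysin maps. The paper points out that this cannot be quoted directly, because Definition \ref{virtual class} restricts the cone to selected components of $\underleftarrow{\lim}_{G}[X//T]$ before extracting the class; the functoriality argument has to be redone at the level of cones and their deformations, which is the content of Lemma \ref{cone def} and the surrounding construction of the flat family $\cC$ relative to the zero section. Your final paragraph correctly identifies the coherence of the deformations as the main obstacle, and your appeal to invariance of the specialization under flat deformation of cones and vector-bundle shifts is sound in principle; but without the reduction to local loops and without the cone-level version of Gysin functoriality, the proposed proof does not close.
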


\begin{proof}

We will first look to the case when $T= (\CC^*)^2$.

There is a geometric realization of the graph $G$ obtained by identifying the nodes of $G$ with the weight center of the corresponding polytopes and the edges with the union of pairs of segments joining the weight center of the polytopes with the weight center of their faces.  Because we work with $(\CC^*)^2$, all the polytopes are two dimensional so any loop $L$ in $G$ divides $\RR^2$ in two disjoint sets, one of which is bounded and contractible -- the interior of the loop.  We denote by $G_L$ the subgraph of $G$ containing only vertexes and edges inside $L$. Applying Definition \ref{virtual class} to $G_L$ (i.e. using only maximal trees in $G_L$), we obtain a class $\underleftarrow{\lim}_{ G_L} [X/T]$. We will first prove that this class is independent of the choice of a maximal tree in $G_L$.

Consider two choices of maximal trees $\Upsilon$ and $\Upsilon'$ in $G_L$, the corresponding cones $C_{\Upsilon}$ and $C_{\Upsilon'}$ and bundles $N_{\Upsilon}$ and $N_{\Upsilon'}$ from Definition \ref{virtual class}, the morphisms $p: \underleftarrow{\lim}_{G}[X//T] \to  \underleftarrow{\lim}_{\Upsilon} [X//T]$ and $p': \underleftarrow{\lim}_{G}[X//T] \to  \underleftarrow{\lim}_{ \Upsilon'}[X//T]$.  We claim that there is a flat deformation
\bea \diagram   \cC \dto \rto & {F\times \CC } \dlto \\  \CC \enddiagram \eea
of the subcone $ p^*C_{\Upsilon} \oplus p'^*N_{\Upsilon'} \mbox{ into }  p'^*C_{\Upsilon'}\oplus p^*N_{\Upsilon}$ inside $F:= p^*N_{\Upsilon}\oplus p'^*N_{\Upsilon'}$, whose restriction over any substack of the zero section $Z_0:=\underleftarrow{\lim}_{ G_L} [X//T]$ of $F$ is also flat over $\CC$. We say that $\cC$ is flat over $\CC$ relative to $Z_0$.

To prove this claim, we will proceed inductively, showing that if the statement is true for all loops $L$ contained in the interior of a loop $L_1$, then the statement will also be true for $L_1$.
In the case when the loop $L$ is minimal (i.e. it doesn't contain any vertex of $G$ in its interior) it follows from Proposition \ref{simple loop} that the virtual class of $G_L$ is independent of the choice of $\Upsilon$.

%For induction to work, instead of working with the virtual class we really need to work with the cones that give this virtual class. This means that whenever we say that two choices of maximal trees lead to the same definition of the virtual class we will mean that up to taking a direct sum with a vector bundle the two cones that give the virtual class sit inside the same vector bundle and could be obtained from one another by a flat deformation over the zero section. By a flat deformation over a scheme $Z$ I mean that for any subscheme $ S \subset Z $ the pull-back of the deformation to $S$ is also flat.

We first need to show that if $G_{L_1} $ is obtained from $G_{L}$ by gluing some trees at certain vertices, then the claim above is true for $G_{L_1}$ if it is true for $G_{L}$. Let $\Upsilon$ and $\Upsilon_1$ be two maximal trees for  $G_{L}$ and $G_{L_1} $, respectively, and assume that $\Upsilon_1$ is obtained by gluing the tree $\Upsilon''$ to $\Upsilon$. The relation between $v(\underleftarrow{\lim}_{G_L}[X//T])$ and $v(\underleftarrow{\lim}_{G_{L_1}}[X//T])$ is essentially a consequence of the functoriality of the Gysin pull-back for the lowest row of the following sequence of Cartesian diagrams
\bea\diagram
 \underleftarrow{\lim}_{ \Upsilon_1} [X//T] \rto  \ddto   & \underleftarrow{\lim}_{ \Upsilon} [X//T]   \times      {\prod_{j\in J_{\Upsilon''}}[X//T]_j}    \rto \dto & \prod_{j \in J_{\Upsilon_1}}[X//T]_j \dto\\
&{\prod_{i\in I_{\Upsilon}}[X//T]_i} \times {\prod_{j\in J_{\Upsilon''}}[X//T]_j} \rto \dto & {\prod_{e_{ji}\in E(\Upsilon)}[X//T]_i} \times {\prod_{j\in J_{\Upsilon''}}[X//T]_j}\dto \\
{\prod_{i \in I_{\Upsilon_1}}[X//T]_i} \rto & {\prod_{i \in I_{\Upsilon}}[X//T]_i} \times {\prod_{e_{ji} \in E(\Upsilon'')}[X//T]_i} \rto & {\prod_{e_{ji}\in E(\Upsilon_1)}[X//T]_i}
\enddiagram
\eea
However, due to the restriction to the component $\underleftarrow{\lim}_{G_L} [X//T] $ of $\underleftarrow{\lim}_{ \Upsilon} [X//T]$ in Definition \ref{virtual class}, the arguments related to the functoriality of the Gysin map will need to be reworked  in this case at the level of cycles/ cones, normal bundles.

For this we will need the following lemma:

\begin{lemma}\label{cone def}

 a) Assume $Z \subset Y \subset X$ are smooth varieties such that $N_{Z|X} = N_{Z|Y} \bigoplus (N_{Y|X})_{|Z}$. For a morphism $\phi :S \to X$, let $\phi_Y :S_Y= S \times_X Y \to Y$, and $\phi_Z: S_Z = S \times_X Z \to Z$.

 $C_{S_Z|S}  \subset \phi^*_Z(N_{Z|X})$ and $\phi^*_Z(N_{Z|X})$ is a vector bundle over $\phi^*_Z(N_{Z|Y})$. As such, $\phi^*_Z(N_{Z|X})$ admits a natural $\CC^*$--action
 whose fixed locus is $\phi^*_Z(N_{Z|Y})$, and
\bea\lim_{t\rightarrow 0} t C_{S_Z|S}= C_{S_Z|C_{S_Y|S}}.\eea
b) Let  $(p,\pi): \cC \to Z_0 \times \CC$ be a morphism of schemes and $s: Z_0 \times \CC \to \cC$ a section of $(p, \pi)$. For any subscheme $S \subset Z_0$, let   $\cC_S$ denote the family over $\CC$ whose fiber at $t \in \CC$  is $C_{s^{-1}(S \times t)|\pi^{-1}(t)}$.   If $\cC$ is flat relative to $Z_0$ (in the sense described above), then the family  $\cC_S$  is flat over $\CC$.

\end{lemma}

\begin{proof}

Let $q_1: M\to \CC$ be the deformation of $S $ to $C_{S_Y|S}$. Thus $M$ is an open set in $Bl_{S_Y \times \{0\} }S \times \CC$ and there is a natural embedding of $S_Z \times \CC$ into $M$. Let $p_2: N\to \CC$ be the deformation of $M$ to the normal cone $C_{S_Z \times \CC|M}$ and let $p_1: N\to \CC$ be the projection induced from $q_1$. We claim that $p_1^{-1}(0)$ is the deformation of $C_{S_Y|S}$ to the cone of $S_Z$ in $C_{S_Y|S}$.  Indeed, $M$ is an open set in $\Proj(\bigoplus_{n\geq0}(I_{S_Y}+(t))^n$. We can lift our discussion to $\Spec(\bigoplus_{n\geq 0}(I_{S_Y}+(t))^n$. Then $N$ could be replaced by an open set in $Bl_{\Spec{\oplus_nJ^n}}\Spec((\bigoplus_n(I_{S_Y}+(t))^n)[u])$, where $J$ is the ideal generated by $u$, the ideal of $S_Z$, and $\bigoplus_{n>0}(I_{S_Y}+(t))^n$. The claim above is equivalent to $J^n \cap (t)= (t) J^n$ which is straightforward. From here we also conclude that $p_2^{-1}(0)=C_{S_Z\times \CC|M}$ is a deformation of $C_{S_z|S}$ to the cone of $S_Z$ in $C_{S_Y|S}$.

Now let $M'$ be the deformation of $X$ to the normal bundle of $Y$ in $X$ and $N'$ the deformation of $M'$ to the normal bundle of $Z\times \CC$ in $M'$. There is a natural embedding of $C_{S_Z\times \CC|M} \subset N_{Z\times \CC|M'} \times_{Z\times \CC} S_Z\times \CC$. There is a natural isomorphism $f:N_{ Z\times\CC |M'} \to N_{Z|X} \times \CC$ (here we use $N_{Z|X} = N_{Z|Y} \bigoplus (N_{Y|X})_{|Z}$). This isomorphism however is not an extension of the canonical isomorphism $N_{ Z\times (\CC -\{0\})|M'_{|(\CC -\{0\})}} \cong N_{Z|X} \times (\CC -\{0\})$,  but the restriction of $f$ to $N_{ Z\times (\CC -\{0\})|M'_{|(\CC -\{0\})}}$ differs from the canonical isomorphism by multiplication with $\CC^*$ on $N_{Z|X} \times (\CC -\{0\})$.

Point b) follows by straightforward calculations.

\end{proof}

Returning now to the induction step, let $\Upsilon$ and $\Upsilon'$ be two maximal trees in $G_{L}$ which lead to the same definition of the virtual class. In a first instance we will consider two trees $\Upsilon_1$ and $\Upsilon_1'$ obtained by gluing a tree $\Upsilon''$ to the same vertex in  $\Upsilon$ and $\Upsilon'$.
The Lemma above can be applied to show that the virtual classes associated to $\Upsilon_1$ and $\Upsilon_1'$ are the same. We will denote by $G'_{L}$ the graph obtained from $G_{L}$ by gluing the tree $\Upsilon''$ to it.

Indeed we apply part a) of the Lemma twice, first when $Z \hookrightarrow Y \hookrightarrow X$ is \bean \label{exact sequence1} {\prod_{i \in I_{\Upsilon_1}}[X//T]_i} \hookrightarrow {\prod_{i \in I_{\Upsilon}}[X//T]_i} \times {\prod_{e_{ji} \in E(\Upsilon'')}[X//T]_i} \hookrightarrow {\prod_{e_{ji}\in E(\Upsilon_1)}[X//T]_i}\eean  and $S_Z \hookrightarrow S_Y \hookrightarrow S$ is
\bean \underleftarrow{\lim}_{ \Upsilon_1} [X/T] \hookrightarrow        \underleftarrow{\lim}_{ \Upsilon} [X/T]   \times      {\prod_{j\in J_{\Upsilon''}}[X//T]_j}                               \hookrightarrow  \prod_{j \in J_{\Upsilon_1}}[X//T]_j,\eean
then also for the case when $\Upsilon$ is replaced by $\Upsilon'$.
%After pullback to $\underleftarrow{\lim}_{G'_{L}}[X//L]$,
We obtain a deformation of $C_{\Upsilon_1}$ to $C_{\underleftarrow{\lim}_{ \Upsilon_1} [X/T]|(C_{\Upsilon} \times {\prod_{j\in J_{\Upsilon''}}[X//T]_j} )} $ inside $N_{\Upsilon_1}$. After pull-back to $\underleftarrow{\lim}_{ G'_{L}} [X//T]$, this induces a deformation of  $p^*_1C_{\Upsilon_1}\oplus p'^*_1N_{\Upsilon'_1}$ to $C_{\underleftarrow{\lim}_{G'_L} [X/T]|((p^*C_{\Upsilon} \oplus p'^*_1N_{\Upsilon'_1})\times {\prod_{j\in J_{\Upsilon''}}[X//T]_j} )}$ inside $p^*_1N_{\Upsilon_1}\oplus p'^*_1N_{\Upsilon'_1}$. (Here by abuse of notation, we identify some objects on $\underleftarrow{\lim}_{ G_{L}} [X//T]\times  \prod_{j \in J_{\Upsilon''}}[X//T]_j$ with their restriction on $\underleftarrow{\lim}_{ G'_{L}} [X//T]$).

By induction, there is a deformation of $ (p^*C_{\Upsilon}\oplus p'^*N_{\Upsilon'}) \times \prod_{j \in J_{\Upsilon''}}[X//T]_j$ to $ (p'^*C_{\Upsilon'}\oplus p^*N_{\Upsilon}) \times \prod_{j \in J_{\Upsilon''}}[X//T]_j$ in $(p^*N_{\Upsilon}\oplus p'^*N_{\Upsilon'})\times \prod_{j \in J_{\Upsilon''}}[X//T]_j$. This extends to a deformation $\cC$  of $ (p^*C_{\Upsilon}\oplus p_1'^*N_{\Upsilon'_1}) \times \prod_{j \in J_{\Upsilon''}}[X//T]_j$ to $ (p'^*C_{\Upsilon'}\oplus p_1^*N_{\Upsilon_1}) \times \prod_{j \in J_{\Upsilon''}}[X//T]_j$. Indeed, from (\ref{exact sequence1}),
%if we denote by $\pi '$ the natural projection $\underleftarrow{\lim}_{\Upsilon' }  \times {\prod_{e_{ji} \in E(\Upsilon'')}[X//T]_i} \to  \underleftarrow{\lim}_{\Upsilon' }$ and by $ \pi ''$ the natural projection $\underleftarrow{\lim}_{\Upsilon'' }[X//T]  \times {\prod_{i \in I_(\Upsilon')}[X//T]_i} \to  \underleftarrow{\lim}_{\Upsilon'' } [X//T]$
the bundle $N_{\Upsilon_1} $ splits as a direct sum of the pull-back of $N_{\Upsilon''}$ and $N_{\Upsilon}$.
%\bea N_{\Upsilon_1}= \pi''( N_{\Upsilon''}) \oplus \pi' (N_{\Upsilon'}).\eea
 We now apply part b) of the Lemma where $Z_0 = \underleftarrow{\lim}_{ G_{L}} [X//T]\times  \prod_{j \in J_{\Upsilon''}}[X//T]_j$ and $S= \underleftarrow{\lim}_{ G'_{L}} [X//T]$ .
This finishes the induction step in this case.

Let $\Upsilon$ be a maximal tree in $G_{L_1}$.  By adding an edge $e$ (whose vertices are in $\Upsilon$) to $\Upsilon$  we form a loop $L$ which is interior to ${L_1}$. We denote by $\Upsilon_{L}$ the subgraph of $\Upsilon$ which is interior to $L$. We first notice that this graph is a maximal tree in $G_{L}$. To check that $\Upsilon_{L}$ is a tree (connected), it is enough to consider two vertices in $\Upsilon_{L}$ and consider the path that joints them in $\Upsilon$. Any connected part of the path that is outside $L$ will intersect $L$ at two points and as such can be replaced by a path in $L$ so in the end we construct a path between the two vertices contained in $\Upsilon_{L}$. Notice that $\Upsilon$ contains all the vertices in $G_{L_1}$, so $\Upsilon_{L}$ contains all vertices in $G_{L}$ so it is maximal. Consider now $\Upsilon'$ a new maximal tree obtained by replacing an edge of $L$ with the edge $e$. By induction, the virtual class associated to $\underleftarrow{\lim}_{ G_{L}} [X//T]$ is the same whether its construction involves $\Upsilon_{L}$ or $\Upsilon'_{L}$. But then the virtual class of $$\underleftarrow{\lim}_{ G_L} [X//T]$$ is the same whether it is defined using $\Upsilon $ or $\Upsilon'$, as they are obtained from $\Upsilon_{L}$ and $\Upsilon'_{L}$ by gluing the same trees to some of their vertices.

We will prove now the general case,  $T = \CC^n$.  As the proof for quotients by $(\CC^*)^n$ for $n >2$ differs from the proof for $n=2$, we will need a few additional notions about complexes of polytopes:

\begin{definition}(see \cite{ziegler})

Let $\cC$ be a pure n-dimensional polytopal complex. A shelling of $\cC$ is a linear ordering $F_1,F_2,...,F_s$ of the facets of $\cC$ such that either $\cC$ is 0-dimensional, or it satisfies the following conditions:

i) The boundary complex $ \cC(\partial F_1) $ of the first facet $F_1$ has a shelling.

ii)For $1 < j< s$ the intersection of the facet $F_j$ with the previous facets is nonempty and is a beginning of a shelling of the $(n-1)$-dimensional boundary complex of $F_j$,that is,
$$F_j \cap( \cup_{i=1} ^{j-1} F_i)= G_1 \cup G_2 \cup...\cup G_r$$ for some shelling $G_1, G_2,....,G_r,...,G_t$ of $\cC(\partial F_j)$ and $1 \leq r \leq t$. (In particular, this requires that $F_j \cap( \cup_{i=1} ^{j-1} F_i)$ has a shelling, so it has pure $(n-1)$-dimensional, and connected for $n>1$.)

A polytopal complex is shellable if it is pure and has a shelling.

\end{definition}

\begin{remark}
With the notations above if $l,t \in \{1,...,r\}$ and $G_l$ and $G_t$ have a codimension one face in common then all the faces containing $G_l\cap G_t$ are between $F_1,...,F_s$. This is an immediate consequence of the fact that $F_j \cap( \cup_{i=1} ^{j-1} F_i)$ is a pure (n-1)-dimensional complex.

\end{remark}

\begin{theorem}(Bruggesser and Mani)
Polytopes are shellable. For any facet $F$ of $P$ there is a shelling of $P$ that ends with $F$.
\end{theorem}

  \begin{lemma}

  For any polytopal subdivision of a polytope has a refinement which is shellable

  \end{lemma}

  \begin{proof}

  For any subdivision of a polytope $P \subset \RR^n $ we can construct inductively a polytope $P' \subset \RR^{n+1}$ such that $P$ is a face of $P'$ and the polytope complex given by all the facets of $P'$ but $P$ is combinatorially equivalent to a refinement of the subdivision of $P$. We will proceed in steps such that at the  $k$-th step we construct a a polytope $P'_k$ such that the $k$- skeleton of the polytopal complex  $\cC(P'_k) $  is equivalent to a refinement of the $k$-skeleton of the subdivision of $P$. The key fact needed for this construction is the following simple observation:

  Let $P'_k$ be a polytope in $\RR^{n+1}$ and $ x \in \partial P'_k$. There is a unique complex of polytopes having as facets polytopes with one vertex $x$ and all the other vertexes being vertexes of $P_k'$. The $n$-skeleton of this complex will be denoted by $\cC(P'_k,x)$. We can move x to a new point $x'$ outside $P'_k$ such that if $P''_k$ is the polytope having as vertexes the vertexes of $P_k'$ and $x'$, $P_k''$ is combinatorially equivalent to $\cC(P'_k,x)$.

  We start by constructing  $P'_0$ such that $P$ is a facet of $P_0'$ and the vertexes of $P_0'$ project on the 0-dimensional faces of the subdivision of $P$ via the projection $p$. We consider the complex $\cC_0$ consisting in the subdivision of $\cC(\partial P_0')$ given by the preimage under projection map of all the faces of the subdivision of $P$. The preimage of a $1$-dimensional face of the subdivision of $P$ via $p$ is a union of 1 dimensional cells in the subdivision of $P$. We further refine the complex $\cC_{01}$ by adding extra points on each 1-dimensional face of $\cC_0$ which is not an edge of $P_0'$ and is of the form $p^{-1}(F)$ for $F$ a 1-dimensional face of $P$. Using the previous observation we can move all  $0$-dimensional faces of $\cC_{0,1}$ such that we obtain a new polytope $P_1'$ with the 1-dimensional skeleton of $\cC(\partial P_1')$ combinatorially equivalent to the one dimensional skeleton of $\cC_{01}$. We complete $\cC(\partial P_1')$ to a polytopal complex $\cC_1$ which is combinatorially equivalent to $\cC_{01}$. We introduce new points on all the $2$-dimensional faces of $\cC_1$ which are not faces of $\cC( \partial P_1')$ and construct a new complex $\cC_{12}$ which can be deformed to a polytope $P_2'$ and we continue the procedure till we construct $P_n'=P'$.

   We apply now the previous theorem to $P'$ and consider a shelling that ends with $P$. Via projection this corresponds to a shelling of a subdivision of $P$.

   %The proof for $T= (\CC^*)^2$  can be adapted to this context, with a loop $L$ replaced by a union $S$ of codimension one cells which is homeomorphic to $S^{n-1}$. We denote by $G_S$ the graph made by all one--dimensional cells of $D$ in or inside $S$. We will prove that the virtual class of $\lim_{G_S}[X//T]$ is independent of a choice of a maximal tree of $G_S$.  We can repeat the proof for $T= (\CC^*)^2$ with the exception of the initial step of induction, which in this case refers to unions $S$ having no $0$--dimensional cell inside. However the proof for this initial step is similar to the proof for $T= (\CC^*)^{n-1}$, in that we can work with a polytope in $\RR^{n-1}$ by erasing one of the faces of $S$.

\end{proof}

We now return to the quotients by $T=(\CC^*)^n$ for $n>2$. For each polytope $P \subset \mu(X)$ which can be written as a union of chambers we consider a further decomposition of each chamber in polytopes such that the complex obtained is shellable. We construct a dual cell decomposition $D$ of the polytope $P$ whose zero cells are the weight centers of the polytopes in the decomposition of $P$  and each of its $k$-dimensional faces $F^{\vee}$ corresponds to a choice of an $n-k$ dimensional face $F$ in the polytope complex and the vertexes of $F^{\vee}$ are the weight center of its adjacent polytopes. As each 0-dimensional cell is inside a chamber it corresponds to a quotient of $X$ by $T$. Each $1$-dimensional cell corresponds either to a flip in the case we cross a wall of a chamber or to an isomorphism if the 1-dimensional cell is contained in a chamber. In this way we replaced our graph $G$ by a new graph $H$ with a natural surjective map $H \to G$, but as all new arrows are isomorphisms we didn't change in any significant way our problem.

 Let $\Upsilon$ and $\Upsilon'$ be two maximal trees in $H$ which differ by one edge. We will have to prove that the virtual classes defined using $\Upsilon$ or $\Upsilon'$ are the same. Let $F_1,...,F_n$ be an ordering of the polytopes in the subdivision of $P$ coming from the shellability of the complex. We will proceed by induction and at the $k$-th step of induction we will assume that all maximal trees in $F_1 \cup F_2 \cup... \cup F_{k-1}$ lead to the same virtual class. For $k=1$ or $k=2$ there is nothing to prove as there is no loop as two polytopes contains a unique 1-dimensional  cell. Let $F_k \cap( \cup_{i=1} ^{k-1} F_i)= G_1 \cup G_2 \cup...\cup G_r$. We will denote by $c_1,...,c_r$ the dual $1$-dimensional cells. Again using induction we will assume that if $L \subset \Upsilon \cup \Upsilon'$ doesn't contains any of $c_i, c_{i+1},...,c_r$ $\Upsilon$ and $\Upsilon'$ lead to the same virtual class and we will check that the same is true if $L$ doesn't contains any of $ c_{i+1},...,c_r$.
There exists $G_t$  for $t<i$ such that $G_t \cap G_i= G_{ti} \neq \emptyset$. Let $L_{ti}$ be the simple loop around $G_{ti}$. We will show that we can make transformations of $\Upsilon$ into another maximal tree that defines the same virtual class and is missing only one edge of $L_{ti}$. If $\Upsilon$ doesn't contain both $c_i$ and $c_t$ we just arbitrary replace edges of $\Upsilon \cap$ with edges of $L_{ti}$ such that we do not add both $c_i$ and $c_t$. By induction this procedure doesn't lead to a change in the definition of the virtual class. The case $\Upsilon$ contains  both $c_i$ and $c_t$ needs one extra construction. We divide in each of the polytopes $ F_j \subset \cup_{i=1}^{k-1}F_i$ such that $G_{it}\subset \partial F_j$ in two polytopes $F_j=F_j^1 \cup F_j^2$, $G_{it} \subset F_j^1$, and $G_{it} \nsubseteq F_j^2$. We can now replace $\Upsilon$ by a new tree $\Upsilon^1$ obtained from $\Upsilon$ by adding the path going through all $F_j^1$ for all $j \neq t$. As $\underleftarrow{\lim}_H [X//T] \hookrightarrow \underleftarrow{\lim}_{\Upsilon }[X//T] $ is the composition of  $\underleftarrow{\lim}_H [X//T] \hookrightarrow \underleftarrow{\lim}_{\Upsilon^1 }[X//T] $ and the projection  $\underleftarrow{\lim}_{\Upsilon_1} [X//T] \rightarrow \underleftarrow{\lim}_{\Upsilon }[X//T] $ the virtual class obtained using the trees $\Upsilon$ or $\Upsilon^1$ are the same. Now the loop around $G_{jt}$ is again missing only one edge. As we have seen in the case $T=(\CC^*)^2$ if two trees differ by an edge and their union contains a small loop they lead to the same virtual class.

\end{proof}

\section{Tautological rings and their properties}

\begin{definition}

Let $P_t = \cup_{i=1}^t F_i$ where $F_i$ are part of a shelling as above. As before $G_{P_t}$ denotes the graph determined by the $0$-dimensional skeleton of the center of weights of $F_i$ with $i<t$ and the $1$-dimensional skeleton that joints them.
We define the tautological ring $T(\underleftarrow{\lim}_{ G_{P_t}}[X//T])$ of $\underleftarrow{\lim}_{ G_{P_t}}[X//T]$  as the image under the homomorphism
\bea  A^*(\underleftarrow{\lim}_{ G_{P_t}}[X//T]) & \to  & A_*(\underleftarrow{\lim}_{ G_{P_t}}[X//T]) \\
\alpha & \to &  \alpha \cap v(\underleftarrow{\lim}_{ G_{P_t}}[X//T]) \eea
of the subring of $A^*(\underleftarrow{\lim}_{ G_{P_t}}[X//T])$ generated by the pullbacks of all the classes in $A^*([X//T]_{F_s})$ for $s<t$ .

\end{definition}

\begin{proposition} \label{push and pull}

Whenever $j<t $ there are natural maps $p^*:T(\underleftarrow{\lim}_{ G_{P_j}}[X//T]) \to T(\underleftarrow{\lim}_{ G_{P_t}}[X//T])$ and
$p_*:T(\underleftarrow{\lim}_{ G_{P_t}[X//T]}) \to T(\underleftarrow{\lim}_{ G_{P_j}}[X//T])$, with $p^*$ a ring morphism.

\end{proposition}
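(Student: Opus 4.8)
The plan is to derive both homomorphisms from a single morphism of stacks together with the way the virtual classes are built on top of one another. First I would construct $p$. Since $P_j=\bigcup_{i<j}F_i$ is a union of shelling polytopes contained in $P_t$, the graph $G_{P_j}$ is the full subgraph of $G_{P_t}$ spanned by the vertices attached to $F_1,\dots,F_{j-1}$; in particular $I_{G_{P_j}}\subseteq I_{G_{P_t}}$, $J_{G_{P_j}}\subseteq J_{G_{P_t}}$, and every edge of $G_{P_t}$ joining two vertices of $G_{P_j}$ already lies in $G_{P_j}$. Given a family $(f:\cT\to B,\ \phi:\cT\to U_{G_{P_t}})$ of the type classified by $\underleftarrow{\lim}_{G_{P_t}}[X//T]$, its restriction over $\phi^{-1}(U_{G_{P_j}})$ is again flat and its fibres are $\Gamma'$--type maps for suitable contractions $\Gamma'$ of $G_{P_j}$ (the $\Gamma'$--type conditions are inherited by a sub-union of the components, and the support of a contraction of $G_{P_t}$ meets every chamber of $G_{P_j}$). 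Functorially this yields
$$ p:\underleftarrow{\lim}_{G_{P_t}}[X//T]\longrightarrow \underleftarrow{\lim}_{G_{P_j}}[X//T], $$
equivalently the projection of the fibre products of Remark~\ref{colimit} that forgets the factors indexed by $K_{G_{P_t}}\setminus K_{G_{P_j}}$. It factors as a closed immersion $\underleftarrow{\lim}_{G_{P_t}}[X//T]\hookrightarrow \underleftarrow{\lim}_{G_{P_j}}[X//T]\times\prod_{k\in K_{G_{P_t}}\setminus K_{G_{P_j}}}[X//T]_k$ followed by projection onto the first factor, which is proper because the remaining factors are proper stacks; so $p$ is representable and proper, and by the argument of Proposition~\ref{simple loop} it restricts to an isomorphism over the open substack of points stable for every chamber of $G_{P_t}$.

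Next I would relate the two virtual classes. By the independence result just proved (the Theorem above), I may compute $v(\underleftarrow{\lim}_{G_{P_t}}[X//T])$ and $v(\underleftarrow{\lim}_{G_{P_j}}[X//T])$ using a maximal tree $\Upsilon_t$ of $G_{P_t}$ that restricts to a maximal tree $\Upsilon_j$ of $G_{P_j}$. Iterating the Cartesian squares of Remark~\ref{colimit} exhibits $\underleftarrow{\lim}_{G_{P_t}}[X//T]$ as built from $\underleftarrow{\lim}_{G_{P_j}}[X//T]$ by further fibre products with the quotients $[X//T]_k$, $k\in K_{G_{P_t}}\setminus K_{G_{P_j}}$, and Definition~\ref{virtual class} then presents $v(\underleftarrow{\lim}_{G_{P_t}}[X//T])$ as the $(\dim X-n)$--dimensional part of $\Delta^!$ applied to $v(\underleftarrow{\lim}_{G_{P_j}}[X//T])\boxtimes\prod_{j}[X//T]_j$, the product running over the new $J$--vertices. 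By the compatibility of refined Gysin homomorphisms with operational cap products, this gives, for every operational class $\alpha$ on $\underleftarrow{\lim}_{G_{P_j}}[X//T]$,
$$ p^*\alpha\cap v(\underleftarrow{\lim}_{G_{P_t}}[X//T])=\Big(\Delta^!\big((\alpha\cap v(\underleftarrow{\lim}_{G_{P_j}}[X//T]))\boxtimes{\textstyle\prod}_j[X//T]_j\big)\Big)_{\dim X-n}. $$

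Then I would define the two homomorphisms. For a generator $\alpha\cap v(\underleftarrow{\lim}_{G_{P_j}}[X//T])$ of $T(\underleftarrow{\lim}_{G_{P_j}}[X//T])$, with $\alpha$ a polynomial in pullbacks of classes from $[X//T]_{F_s}$ for $s<j$, I set $p^*\big(\alpha\cap v(\underleftarrow{\lim}_{G_{P_j}}[X//T])\big):=p^*\alpha\cap v(\underleftarrow{\lim}_{G_{P_t}}[X//T])$. Since the structure morphism to $[X//T]_{F_s}$ for $s<j<t$ factors through $p$, $p^*\alpha$ lies in the subring generating $T(\underleftarrow{\lim}_{G_{P_t}}[X//T])$, so the value lands there; independence of the representative $\alpha$ follows from the displayed identity, since equal classes $\alpha\cap v$ are carried to equal outputs by the same refined-Gysin-and-truncation operation; and $p^*$ is a ring morphism because operational pullback is multiplicative and capping with the virtual class is a module structure. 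For $p_*$ I take the ordinary proper pushforward: writing a generator of $T(\underleftarrow{\lim}_{G_{P_t}}[X//T])$ as $\big(p^*\alpha_1\big)\cdot\alpha_2\cap v(\underleftarrow{\lim}_{G_{P_t}}[X//T])$ with $\alpha_1$ a polynomial in pullbacks from $[X//T]_{F_s}$ ($s<j$) and $\alpha_2$ one in pullbacks from $[X//T]_{F_s}$ ($j\le s<t$), the projection formula gives $p_*=\alpha_1\cap p_*\big(\alpha_2\cap v(\underleftarrow{\lim}_{G_{P_t}}[X//T])\big)$, so it remains to check that $p_*\big(\alpha_2\cap v(\underleftarrow{\lim}_{G_{P_t}}[X//T])\big)$ lies in $T(\underleftarrow{\lim}_{G_{P_j}}[X//T])$. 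This is where the local geometry enters: pushing a class pulled back from a newly glued quotient $[X//T]_{F_s}$ across the flips that connect $F_s$ to the polytopes of $P_j$ rewrites it, after capping with $v(\underleftarrow{\lim}_{G_{P_j}}[X//T])$, as a polynomial in classes of the old quotients; this follows from the explicit blow-up description of the flips in Proposition~\ref{moduli} together with the deformation-to-the-normal-cone comparisons of Proposition~\ref{tree to loop}, and it is exactly what the algorithm of Remark~\ref{compute} computes. Granting these relations, $p_*$ carries $T(\underleftarrow{\lim}_{G_{P_t}}[X//T])$ into $T(\underleftarrow{\lim}_{G_{P_j}}[X//T])$.

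I expect the last step to be the main obstacle: making precise the push-forward relations that move classes off the freshly glued quotients into the tautological ring of the smaller inverse limit, and verifying that the truncation to the expected-dimensional part in Definition~\ref{virtual class} commutes both with the iterated refined Gysin maps of the second paragraph and with $p_*$. By contrast, the existence and properness of $p$, the formula for $p^*\alpha\cap v$, and the fact that $p^*$ is a well-defined ring morphism are formal once that compatibility is established.
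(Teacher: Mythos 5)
Your overall strategy follows the same skeleton as the paper's proof: induct along the shelling, handle the case where $G_{P_t}$ is obtained from $G_{P_j}$ by gluing trees via compatibility of Gysin maps with proper pushforward and the projection formula, and reduce the case of a newly created loop to the tree case. The $p^*$ half of your argument is essentially the paper's, modulo one caveat: your displayed identity expressing $v(\underleftarrow{\lim}_{G_{P_t}}[X//T])$ as a truncated iterated Gysin pullback of $v(\underleftarrow{\lim}_{G_{P_j}}[X//T])$ is not automatic, precisely because Definition \ref{virtual class} truncates to the expected-dimensional components of a cone rather than taking a genuine Gysin class; the paper has to rework the functoriality of the Gysin map at the level of cones (Lemma \ref{cone def}) to justify exactly this kind of statement, and you correctly flag this but do not supply it.

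The genuine gap is in $p_*$. After the projection formula you are left with showing that $p_*\bigl(\alpha_2\cap v(\underleftarrow{\lim}_{G_{P_t}}[X//T])\bigr)$ lies in $T(\underleftarrow{\lim}_{G_{P_j}}[X//T])$ when $\alpha_2$ is pulled back from the freshly glued quotients, and at this point you write ``granting these relations'' --- but these relations are the actual content of the proposition in the loop case, not a corollary of Remark \ref{compute} (which presupposes the proposition). The paper's mechanism is concrete: cut an edge of the simple loop $L''$ to get a tree $\Upsilon$, use the deformations of Proposition \ref{tree to loop} to produce, component by component, a surjection $f_k$ between the special fibers; then prove that the Chow groups of the deformed and undeformed components agree (Lemma \ref{affine bundle}, the affine-bundle/$\CC^*$ computation) and that each component is presented by weighted blow-ups so that Lemma \ref{keel} applies. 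This assembles into a homomorphism $f$ on Chow groups satisfying $f\bigl(v(\cdots\times_{\underleftarrow{\lim}_{\Upsilon'}}\underleftarrow{\lim}_{\Upsilon}[X//T])\bigr)=v(\cdots\times_{\underleftarrow{\lim}_{\Upsilon'}}\underleftarrow{\lim}_{L''}[X//T])$ and $p_{\Upsilon*}=f\circ p_{L''*}$, which is the identity that lets the projection formula close the loop case. Without constructing $f$ and verifying these two properties, your argument for $p_*$ assumes what it needs to prove.
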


\begin{proof}

In the case when $G_{P_t}$ is obtained from $G_{P_j}$ by gluing some trees at the vertices of $G_{P_j}$, both $p^*$ and $p_*$ are well defined because of  the commutativity of the composition of Gysin maps respectively projection formula. If $G_{P_j}$ is obtained from $G_{P_t}$ by  gluing a simple loop $L''$, by cutting one edge of the loop which is not in $G_{P_j}$ and using Proposition \ref{tree to loop} we can can reduce the problem to the case of gluing a tree. Indeed let $\Upsilon$ denote the tree obtained from $L''$ by cutting an edge and $\Upsilon'$ the tree given by $\Upsilon \cap G_{P_j}$. Using Proposition \ref{tree to loop} we obtain two families of deformations of $[X//T]_{G_{P_j},\Upsilon,k} = \underleftarrow{\lim}_{ G_{P_j}}[X//T] \times_{[X//T]_{\Upsilon',k} } [X//T]_{\Upsilon,k}$ and  $[X//T]_{G_{P_j},L'',k} = \underleftarrow{\lim}_{ G_{P_j}}[X//T] \times_{[X//T]_{\Upsilon',k} } [X//T]_{\Upsilon,k}$ with a natural morphism
\bea f_k:[X//T]_{G_{P_j},\Upsilon,k,0} \to [X//T]_{G_{P_j},L'',k,0} \eea  between their fibers over $0$. First we will verify that there exist isomorphisms \bea A([X//T]_{G_{P_j},\Upsilon,k}) \simeq A([X//T]_{G_{P_j},\Upsilon,k,0}) \mbox{  and  } A([X//T]_{G_{P_j},L'',k}) \simeq A([X//T]_{G_{P_j},L'',k,0}) \eea induced by the family of deformations for all components $[X//T]_t$ whose generic point doesn't parameterize irreducible orbit of $T$ .

The check boils down to the following to lemma:

\begin{lemma} \label{affine bundle}

If A is an affine bundle over $Z$ with a $\CC^*$ action and a zero section $Z \subset A$, and $N_Z$ denotes the normal bundle of $Z$ in $A$, then there is a natural isomorphism of Chow groups $A([A \setminus Z/ \CC^*]) \simeq A([N_Z \setminus Z/ \CC^*])$ induced by the deformation to the normal cone.

\end{lemma}

\begin{proof}

Both $A([A \setminus Z/ \CC^*])$ and $A([N_Z \setminus Z/ \CC^*])$ are equal with $A(Z)[X]/ P(X)$ where $P(X)$ is the equivariant top Chern class of $N_Z$ (see \cite{noi1}).

\end{proof}

The next lemma part a) reduces easily to Proposition 6.7  from  \cite{fulton}.It can be used to effectively to compute the "virtual" intersection ring of the pull-back of a weighted blow-up.  Part b) is a  slight generalization of Theorem 1 from \cite{keel} and it shows the result of such a computation under some additional assumptions which makes the final formula look nicer. As we don't need the explicit formula part b) is not essential for our proof. The proof of part b) is the same proof as in \cite{keel} once we know Lemma \ref{affine bundle}.

\begin{lemma} \label{keel}

  Let $Z \subset Y$ be an imbedding of smooth varieties $K$ the kernel of the Gysin map $i^!:A(Y) \to A(Z)$ . Let $Bl^w_Z Y$ denote a weighted blow-up of $Y$ along $Z$.

a)A class $ \alpha \in v\cap A^*(Bl^w_Z Y \times_Y T)$ is $0$  if and only if its image in $A(T)$ is $0$ and $\alpha E=0$.

b)Assume $i^!$ is surjective. There exists a natural action of $\CC^*$ on of the normal bundle of $Z$ $N_Z$ and we will denote by  $P(E)$ a polynomial with coefficients in $A(Y)$ whose free term is $Y$ and whose pullback to $Z$ is the equivariant top Chern class of $N_Z$
  Then $A(Bl^w_Z Y)= A(Y)[E]/I$ where  $I$ is generated by $K \cdot E$ and $P(E)$. Moreover for any $\phi: T \to Y$ birational and $K_T$ denotes the kernel of Gysin map $A(T) \to A(Z \times_Y T)$ then if $ v= \phi^!([Bl^w_Z Y])$ then $A(T)[E]/(P_N(E), EK)$ is a subring of $v\cap A^*(Bl^w_Z Y \times_Y T)$, where we identified the classes in $A(Y)$ with their pull-back in $A(T)$.

  \end{lemma}

Indeed as all the components  $[X//T]_{G_{P_j},\Upsilon,k}$  which don't contain points parameterizing irreducible orbits can be described in terms of weighted blow-ups of sequences of weighted projective sub-bundles and fiber products of those. So their Chow groups can be computed by repeatedly applying Lemma \ref{affine bundle} and Lemma \ref{keel}.

In conclusion there exists a morphism $f: A( \underleftarrow{\lim}_{ G_{P_j}}[X//T] \times_{\underleftarrow{\lim}_{\Upsilon'}[X//T]} \underleftarrow{\lim}_{\Upsilon}[X//T]) \to A( \underleftarrow{\lim}_{ G_{P_j}}[X//T] \times_{\underleftarrow{\lim}_{\Upsilon'}[X//T]} \underleftarrow{\lim}_{L''}[X//T])$.

 As $ v(\underleftarrow{\lim}_{L''}[X//T])=[\underleftarrow{\lim}_{L''}[X//T]]$ we conclude that $$f(v(\underleftarrow{\lim}_{ G_{P_j}}[X//T] \times_{\underleftarrow{\lim}_{\Upsilon'}[X//T]} \underleftarrow{\lim}_{\Upsilon}[X//T])) = v(\underleftarrow{\lim}_{ G_{P_j}}[X//T] \times_{\underleftarrow{\lim}_{\Upsilon'}[X//T]} \underleftarrow{\lim}_{L''}[X//T]).$$

   If $p_{\Upsilon}: \underleftarrow{\lim}_{ G_{P_j} \cup \Upsilon} [X//T] \to \underleftarrow{\lim}_{ G_{P_j}}[X//T]$ and $p_{L''}: \underleftarrow{\lim}_{ G_{P_j}\cup L''} [X//T] \to \underleftarrow{\lim}_{ G_{P_j}}[X//T]$ denote the two natural projections it is easy now to check that $p_{\Upsilon *} = f \circ p_{L'' *}$. The proposition can now be proved using projection formula for our case when $G_{P_t}$ is obtained from $G_{P_j}$ by gluing a simple loop. In general the proposition can be solved inductively by gluing trees and simple loops.

\end{proof}

\begin{remark} \label{compute}
We can construct the tautological ring  $T(\underleftarrow{\lim}_{ G}[X//T])$ inductively using Lemma \cite{keel} every time we extend a graph by adding a new vertex and using the morphism $f: T( \underleftarrow{\lim}_{ G_{P_j}}[X//T] \times_{\underleftarrow{\lim}_{\Upsilon'}[X//T]} \underleftarrow{\lim}_{\Upsilon}[X//T]) \to T( \underleftarrow{\lim}_{ G_{P_j}}[X//T] \times_{\underleftarrow{\lim}_{\Upsilon'}[X//T]} \underleftarrow{\lim}_{L''}[X//T])$ every time we add a simple loop.  We can compute the contribution to the virtual class of $\underleftarrow{\lim}_{\Upsilon'}[X//T]$ of each of the components of $\underleftarrow{\lim}_{\Upsilon',k,O}[X//T]$ and so explicitly describe the kernel of $f$.

\end{remark}

I believe that the natural context in which to understand the virtual classes of the substrata in $\lim_ {G_{P_t}} [X//T]$ and the relations between them is an $n$-category naturally associated to the $T$--action on $X$. The objects in this category are cycles in the fixed loci of $T$. Let $c$ and $c'$ be two such cycles. A simple 1-morphism between them is given by the following data:

$\bullet $ An $(n-1)$-- dimensional  subtorus $T'$ of $T$.

$\bullet$ A fixed locus $X_{T'}$ for the $T'$--action on $X$, which contains both $c$ and $c'$.

$\bullet$ The union of all the orbits for the action of $T/T'$ on $X_{T'}$ that join $c$ and $c'$. We call this union the geometric presentation of $f$.

All 1-morphisms are compositions of simple 1-morphisms. The geometric presentation of such a 1-morphism is the union of the geometric presentations of the simple 1-morphisms involved.

Consider two 1-morphisms $f$ and $g$ from $c$ to $c'$. A simple 2-morphism from $f$ to $g$ is given by the following data:

$\bullet$ $T'' \hookrightarrow T$ subtorus of dimension $n-2$.

$\bullet$ A fixed locus $X_{T''}$ of $T''$ containing the geometric presentations of $f$ and $g$.

$\bullet$ All the orbits of the action of $T/T''$ on $X_{T''}$ whose boundaries are contained in the geometric presentations of $f$ and $g$. We call this union the geometric presentation of the 2-morphism.

All 2-morphisms are compositions of simple 2-morphisms. The geometric presentation of such a 2-morphism is the union of the geometric presentations of the simple 2-morphisms involved.

And similarly with 3-morphisms,..., $n$-morphisms. From this points of view, the moment map is a functor between $n$-categories, because a polytope divided in chambers leads also to a simple example of an $n$-category  by putting arrows on its faces corresponding to the flow of different $\CC^* \subset T$.

We would like to describe this $n$-category up to linear equivalence. The full description might get involved but here are some basic points:

$\bullet$ The objects of the $n$-category are replaced by the Chow ring $A(X_T)$ of all the fixed loci of the action of $T$ on $X$;

$\bullet$ Consider $T''\hookrightarrow T'\hookrightarrow T$ with $T''$ and $T'$ of dimensions $(k-1)$ and $k$, respectively. For any fixed locus $X_{T''}$ of $T''$ and for any fixed locus $Z$ of $T'$ in $X_{T''}$ we will construct two tautological rings $T(Z^+)$, and $T(Z^-)$, whose objects should be thought of as morphisms with target in $Z$, and, respectively, morphisms with source in $Z$. Let $BZ^+$ and $BZ^- \hookrightarrow X_{T''}$ be the unions of one dimensional orbits whose limit at $\infty$, respectively 0, is in $Z$. Let $I'$ be the set of indexes in $I$ corresponding to semistable conditions such that $U_i \cap BZ^+ \neq \emptyset $.

$ \underleftarrow{\lim}[BZ^+// (T/T')]$ is a weighted projective bundle over $\underleftarrow{\lim}[Z//T'']$. It could also be naturally be interpreted as defining a class in the ring $ T(\underleftarrow{\lim}_{I'} [X_{T''}//(T'/T'')]_i)). $
Let \bea  p^*_{I'}: T(\underleftarrow{\lim}_{I'} [X_{T''}//(T'/T'')]_i) \to T(\underleftarrow{\lim} [X_{T''}//(T'/T'')])\eea be the pull-back morphism described above. $p_{I'}^*([BZ^+// (T/T')])$ is independent of $i$ and it can be used to define \bea T(Z^+) = p_{I'}^*([B_Z^+// (T/T')])\cap  T(\underleftarrow{\lim} [X_{T'}//(T/T')]). \eea (Here $\cap$ denotes the cap product). Similarly we can define $T(Z^-)$.  One could proceed further to define tautological rings of morphisms with given source and given target.

\section{Functorial behavior of the virtual class}

We will denote by $A^{vir}(\underleftarrow{\lim}[X/T])$ the subgroup of the Chow group  $A(\underleftarrow{\lim}[X/T])$ obtained by intersection classes of  $A^*(\underleftarrow{\lim}[X/T])$ with $v(\underleftarrow{\lim}[X/T])$.

  We will start by proving that for any $T$- equivariant  morphism between two projective  smooth varieties $X$ and $Y$ with a torus action there is a morphism between the moduli spaces $\underleftarrow{\lim}[X/T]$ and $\underleftarrow{\lim}[Y/T]$.After this we will prove that there is a pull-back morphism between  $A^{vir}(\underleftarrow{\lim}[X/T])$ and $A^{vir}(\underleftarrow{\lim}[Y/T])$. Also we will check that there are natural push forward and pullback morphisms between the tautological rings:  $T(\underleftarrow{\lim}[X/T])$ and $T(\underleftarrow{\lim}[Y/T])$. This will be done by investigating the relation between the virtual classes of  $\underleftarrow{\lim}[X/T]$ and $\underleftarrow{\lim}[Y/T]$.

\begin{proposition}

Let $f:X \to Y$  be an equivariant morphism between $2$ smooth projective varieties. Let $L$ be a line bundle on $Y$ and $\Gamma(Y)$ a tree associated to the chamber decomposition of the image of the moment map given by $L$. There exists a line bundle $L'$ on $X$ and  a tree $\Gamma(X)$ such that there is a morphism between $\underleftarrow{\lim}_{\Gamma(X)}[X/T]$ and $\underleftarrow{\lim}_{\Gamma(Y)}[Y/T]$.

\end{proposition}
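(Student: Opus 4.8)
The plan is to choose $L'$ so that the chamber decomposition of $X$ refines the $f$-pullback of the one on $Y$, to take $\Gamma(X)$ to be a lift of $\Gamma(Y)$ along this refinement, and then to build the morphism by post-composing a family over $X$ with $f$ and contracting the components that $f$ collapses.

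First I would fix the line bundle. Choose a $T$-linearized very ample line bundle $A$ on $X$ and, for a small positive rational $\varepsilon$, set $L' := (f^*L \otimes A^{\varepsilon})^{\otimes m}$, with $m$ clearing the denominator of $\varepsilon$. Since $f^*L$ is globally generated (being the pullback of a very ample bundle) and $A$ is ample, $L'$ is $T$-linearized and ample, so it defines a moment map $\mu_{L'}:X\to\RR^n$ and a chamber decomposition. For $\varepsilon$ small enough, a Hilbert--Mumford weight computation shows that this decomposition refines the $f$-image of the decomposition of $\mu_L(Y)$, in the sense that there is a map $\pi$ from the set of chambers of $\mu_{L'}(X)$ to the set of chambers of $\mu_L(Y)$ with $f(U_i)\subseteq U^Y_{\pi(i)}$ for every $i$; each codimension-one wall locus $Y_j\subset X$ of $\mu_{L'}(X)$ is carried by $f$ either into the stable locus of a chamber of $\mu_L(Y)$, in which case we call $j$ \emph{new}, or into a wall locus $Y^Y_{j'}\subset Y$, in which case $\CC^*_j=\CC^*_{j'}$, while for a new $j$ the image $f(Y_j)$ has strictly smaller dimension.

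Next I would fix the tree and construct the morphism. Using the refinement, one lifts $\Gamma(Y)$ to a subtree $\Gamma(X)\subseteq G$ of the graph of $X$: over each vertex of $\Gamma(Y)$ one takes a connected subtree of chambers of $\mu_{L'}(X)$, joined across the walls of $\mu_{L'}(X)$ lying over the walls of $\Gamma(Y)$ (an edge of $\Gamma(Y)$ over which $f$ induces an isomorphism of GIT quotients being simply replaced by a repeated vertex), so arranged that the $J'$-contraction of $\Gamma(X)$ along its set $J'$ of new walls, in the sense defined above, is exactly $\Gamma(Y)$, with the partition maps $I_{\Gamma(X)}\to I_{\Gamma(Y)}$ induced by $\pi$. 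Now, given a family $\cT\xrightarrow{g} B$, $\psi:\cT\to U_{\Gamma(X)}$ as in the gluing-by-fiber-products Proposition (equivalently, a point $B\to\underleftarrow{\lim}_{\Gamma(X)}[X//T]$), I post-compose with $f$ to get $f\circ\psi:\cT\to Y$, which by the refinement lands in $U^Y_{\Gamma(Y)}$. A fiber $\cT_b$ is a $\Gamma'$-type map for some contraction $\Gamma'$ of $\Gamma(X)$, and $f\circ\psi$ collapses exactly the components of $\cT_b$ lying over the new walls; contracting these in every fiber yields a family $\overline{\cT}\to B$ with a $T$-equivariant $\overline\psi:\overline{\cT}\to U^Y_{\Gamma(Y)}$ whose fibers are $\Gamma''$-type maps, where $\Gamma''$ is $\Gamma'$ with its new edges contracted. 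Since contracting all new edges of $\Gamma(X)$ produces $\Gamma(Y)$, $\Gamma''$ is a contraction of $\Gamma(Y)$, so $(\overline{\cT}\to B,\overline\psi)$ meets the conditions of the gluing-by-fiber-products Proposition for $\Gamma(Y)$ and is classified by a map $B\to\underleftarrow{\lim}_{\Gamma(Y)}[Y//T]$; naturality in $B$, or equivalently applying the construction to the universal family over $\underleftarrow{\lim}_{\Gamma(X)}[X//T]$, gives the desired morphism.

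The hardest part will be showing that the fiberwise contraction globalises to a \emph{flat} family $\overline{\cT}\to B$ whose formation commutes with base change. I would argue exactly as in the Proposition above establishing the flatness of $\overline{f}$: for $k\gg 0$ the sheaf $g_*\big((f\circ\psi)^*L^{k}\big)$ is locally free because $h^0$ of $(f\circ\psi)^*L^{k}$ on a fiber equals the number of lattice points in the relevant moment polytope and is therefore independent of $b$, first over $B_{\mathrm{red}}$ and then over the nilpotents by the usual d\'evissage through powers of the nilradical, and the contracted fibers are recovered as $\Proj$ of the section rings of $(f\circ\psi)^*L^{k}$, on which this bundle becomes relatively ample. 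Two secondary points that must also be nailed down are that $\varepsilon$ really can be taken small enough for the refinement statement, and that $\Gamma(X)$ can be chosen as a tree lifting $\Gamma(Y)$, the latter reducing to a connectivity statement for the part of $G$ lying over the closed star of each vertex of $\Gamma(Y)$.
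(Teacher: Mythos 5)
Your choice of $L'$ and the resulting tree correspondence are essentially the paper's: the paper takes $L'=L^n\otimes K$ with $K$ relatively ample and $n\gg 0$, which is your $(f^*L\otimes A^{\varepsilon})^{\otimes m}$ in different normalization, and it likewise reduces the problem to producing, for each wall $j'$ of $\Gamma(Y)$, a morphism from the inverse limit over the chain of chambers of $X$ sitting over $j'$ to $[Y/T]_{j'}$. Where you genuinely diverge is in how that morphism is built. The paper invokes Proposition \ref{moduli}, which realizes $[Y/T]_{j'}$ as an open substack of $[\ol{M}_{0,\cA_s}(Y,d,a)/T_{j'}]$: the universal family $\cT$ over $\underleftarrow{\lim}_{T_{j'}}[X/T]$ gives a family of two-pointed genus-zero curves over $\cT/\CC^*_{j'}$ mapping to $Y$, and one simply \emph{stabilizes} that family, so the flatness and base-change issues you isolate as the hardest part are absorbed into the standard stabilization of families of prestable maps. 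You instead contract the non-compact fibers by hand and re-prove flatness of the contraction via the lattice-point/Hilbert-polynomial argument of the compactification proposition. Both routes are viable; the paper's is cheaper because the contraction is delegated to existing machinery, while yours stays entirely inside the functor of the gluing-by-fiber-products proposition at the cost of redoing the flatness argument on families whose fibers are not proper (so the $\Proj$-of-section-rings step needs the equivariant-sections/compactified-orbit formulation to make sense).

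One statement in your refinement step is wrong as written: a wall locus $Y_j\subset X$ is never carried by $f$ into the stable locus of a chamber of $Y$, because stabilizers only grow under an equivariant morphism ($\Stab(x)\subseteq\Stab(f(x))$), so $f(Y_j)$ always lies in the positive-dimensional-stabilizer locus of $Y$. The correct dichotomy is between walls of $\mu_{L'}(X)$ across which the associated chamber of $\mu_L(Y)$ changes and those across which it does not: for $\varepsilon$ small all walls of $\mu_{L'}(X)$ are confined to $\varepsilon$-slabs around the walls of $\mu_L(Y)$, and your ``new'' walls are the extra walls inside a single slab. This does not derail the construction -- the components that get contracted are still exactly those all of whose constituent chambers lie in one slab, since their image orbits in $Y$ are stabilized by the corresponding $\CC^*_{j'}$ -- but the criterion ``$f(Y_j)$ lands in the stable locus'' would select the empty set of new walls, so it should be replaced before the contraction step is run.
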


\begin{proof}

 For each point $y \in Y$ and $ x \in f^{-1}(y)$  the stabilizer group $Stab(y)$ is a subgroup of $Stab(x)$. In particular if $U$ is an open set in $Y$ such that for any point $y \in U$ has finite stabilizers and such that U has compact quotient $f^{-1}(U)$ will have the same properties. Let $L$ be an ample line bundle on $Y$ and let $K$ be a linearized line bundle on $X$ which is ample along the fibers of $f$. We can pick a very large $n$ such that $L'= L^n \otimes K$ is very ample.  Moreover for any linearization of $L$ such that the point zero in the image of the moment map is far enough from the walls if we denote by $U$ the open set of stable points associated to this linearization, the corresponding linearization for $L'$ will have as set of stable points $f^{-1}(U)$  as it can be checked by the numerical criteria.  However there are chambers in the image of the moment map for $L'$ which are not associated to chambers of $L$.  For any $y \in Y$ is $\CC^* $ the image of the $f^{-1}(y)$ under the moment map covers a union of chambers with the walls  between them corresponding to the chosen $\CC^*$. So for each wall $F_j \in J_Y$  in the image of the moment map of Y, there is a unique path $T_j$ in  $\Gamma(X)$ such that for any point $y$ which is mapped into $F_j$ under the moment map and for any $ x \in f^{-1}(y)$ the image of $x$ under the moment map will be in one of the chambers or walls associated to $T_j$. The components of the fixed locus of the $\CC^*$ in $X$ are inside the preimage of the fixed loci of $Y$. So all orbits in $X$ which are mapped by the moment map into the chambers associated to $T_j$ are in the preimage of the fixed loci in $Y$ which are mapped into $F_j$. In this way for any tree $\Gamma(Y)$ we have a canonical tree $\Gamma(X)$ associated to it.

To construct the morphism between  $\underleftarrow{\lim}_{\Gamma(X)}[X/T]$ and $\underleftarrow{\lim_{\Gamma(Y)}}[Y/T]$ we just need to check that there exists a natural morphism  $\underleftarrow{\lim}_{T_j}[X/T]$ to  $[Y/T]_j$ . The universal family $\cT$ over $\underleftarrow{\lim}_{T_j}[X/T]$  is a family of curves over $\cT/ \CC^*$ with a map into $Y$. After stabilizing  this family of curves we obtain a new family $\cT_s$ which corresponds to the data needed for a morphism into $[Y/T]_j$.

\end{proof}

\begin{theorem}

For any equivariant morphism $f : X  \to Y$ between two smooth projective varieties with a torus action $T$, the  morphism $\underleftarrow{\lim} f : \underleftarrow{\lim}[X/T] \to \underleftarrow{\lim}[Y/T]$ induces  morphisms $\underleftarrow{\lim} f_* : T( \underleftarrow{\lim}[X/T]) \to T(\underleftarrow{\lim}[Y/T])$ and  $\lim f^* : T( \underleftarrow{\lim}[X/T]) \to T(\underleftarrow{\lim}[Y/T])$ where the first is a morphism of groups and the last a morphism of rings. Also $\underleftarrow{\lim} f_* $ extends to a morphism defined on $ A^{vir}( \underleftarrow{\lim}[X/T])$. In particular if the morphism  $f : X  \to Y$ is birational   $\underleftarrow{\lim} f_*(v( \underleftarrow{\lim}[X/T]) = \underleftarrow{\lim}[Y/T]$.
\end{theorem}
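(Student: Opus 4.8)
The plan is to build both maps from the inductive presentation of the tautological rings in Proposition \ref{push and pull} and Remark \ref{compute}, reducing at every stage to the two elementary moves (gluing a tree, gluing a simple loop). First I would record the local shape of $\underleftarrow{\lim}f$. By the previous Proposition the canonical tree $\Gamma(X)$ is assembled from $\Gamma(Y)$ wall by wall, each wall $F_j\in J_Y$ being replaced by a path $T_j\subseteq\Gamma(X)$, and the composition $\underleftarrow{\lim}_{\Gamma(X)}[X/T]\to\underleftarrow{\lim}_{\Gamma(Y)}[Y/T]\to[Y//T]_j$ factors through the morphism $\underleftarrow{\lim}_{T_j}[X/T]\to[Y//T]_j$ constructed there; the corresponding statement for a chamber $F_i$ of $Y$ holds because $f^{-1}(F_i)$ is a union of chambers of $X$. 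Consequently the operational pullback $(\underleftarrow{\lim}f)^{*}$ carries the pullback of any class from $[Y//T]_{F_s}$ into the subring $R_X\subseteq A^{*}(\underleftarrow{\lim}[X/T])$ generated by pullbacks from the $[X//T]_{F_r}$, since the vertices of each $T_j$ are among the $F_r$. Since $X$ and $Y$ are projective the moduli stacks $\underleftarrow{\lim}[X/T]$ and $\underleftarrow{\lim}[Y/T]$ are proper, so $\underleftarrow{\lim}f$ is proper; thus we already have a proper pushforward $(\underleftarrow{\lim}f)_{*}$ on Chow groups and a ring map $(\underleftarrow{\lim}f)^{*}$ on operational rings, and the content of the theorem is that these respect the tautological subgroups and the cap products with the virtual classes.

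For the pullback $f^{*}$, which goes from $T(\underleftarrow{\lim}[Y/T])$ to $T(\underleftarrow{\lim}[X/T])$, I would set $f^{*}(\alpha_Y\cap v(\underleftarrow{\lim}[Y/T])):=(\underleftarrow{\lim}f)^{*}\alpha_Y\cap v(\underleftarrow{\lim}[X/T])$ for $\alpha_Y\in R_Y$, and check it is well defined and multiplicative by descending $(\underleftarrow{\lim}f)^{*}|_{R_Y}\colon R_Y\to R_X$ through the two surjections $R_Y\twoheadrightarrow T(\underleftarrow{\lim}[Y/T])$ and $R_X\twoheadrightarrow T(\underleftarrow{\lim}[X/T])$ given by capping with $v$. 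The kernel of the first is $\{\alpha_Y:\alpha_Y\cap v=0\}$, which --- working through the building-block description of Remark \ref{compute} and applying Lemma \ref{keel}(a) at each step --- is generated by classes that vanish on the smooth open part of $\underleftarrow{\lim}[Y/T]$ together with their products with the exceptional (boundary) divisors produced by the weighted blow-ups of Proposition \ref{moduli}. The key point is the naturality of these data in $f$: the smooth open part of $\underleftarrow{\lim}[Y/T]$ pulls back inside the smooth open part of $\underleftarrow{\lim}[X/T]$, and each boundary divisor on the $Y$-side pulls back to a sum of boundary divisors on the $X$-side supported along the paths $T_j$, because the deformation-to-the-normal-cone constructions of Propositions \ref{simple loop} and \ref{tree to loop} and of Lemma \ref{affine bundle} are all induced from the deformation of $X$ (resp. $Y$) to a normal bundle, and $\underleftarrow{\lim}f$ carries the first family to the second. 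Hence $(\underleftarrow{\lim}f)^{*}$ sends the kernel on the $Y$-side into the kernel on the $X$-side, so $f^{*}$ is a well-defined ring homomorphism, and it manifestly commutes with the $p^{*}$ of Proposition \ref{push and pull}.

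For the pushforward I would use the projection formula together with the fact, from the previous Proposition, that the universal family over $\underleftarrow{\lim}_{T_j}[X/T]$ stabilizes to the family of stable maps classified by $[Y//T]_j$; concretely, $\underleftarrow{\lim}f$ carries each stratum $[X/T]_{\Gamma',k}$ onto a stratum of $\underleftarrow{\lim}[Y/T]$, and the restricted morphism is again of the fibre-product shape analysed in Proposition \ref{tree to loop}. Writing an element of $T(\underleftarrow{\lim}[X/T])$, by Remark \ref{compute}, as a combination of classes $[X/T]_{\Gamma',k}\cap(\text{pullbacks of }R_X\text{-generators})$ and applying the projection formula stratum by stratum shows $(\underleftarrow{\lim}f)_{*}$ of such a class is again tautological on $Y$; the same computation shows that for arbitrary $\alpha\in A^{*}(\underleftarrow{\lim}[X/T])$ the pushforward of $\alpha\cap v(\underleftarrow{\lim}[X/T])$ is a legitimate class of $\underleftarrow{\lim}[Y/T]$, so $\underleftarrow{\lim}f_{*}$ extends to $A^{vir}(\underleftarrow{\lim}[X/T])$. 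Finally, if $f$ is birational then $\dim X=\dim Y$ and $f$ restricts to an isomorphism over a dense $T$-invariant open of $Y$, so the component of $\underleftarrow{\lim}[X/T]$ parametrizing generic irreducible orbits maps birationally onto the corresponding component of $\underleftarrow{\lim}[Y/T]$; since the cone presentations of Definition \ref{virtual class} are compatible under $\underleftarrow{\lim}f$ and every remaining top-dimensional component of the defining cone cycle is either matched or contracted, one obtains $\underleftarrow{\lim}f_{*}\,v(\underleftarrow{\lim}[X/T])=v(\underleftarrow{\lim}[Y/T])$.

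The step I expect to be the main obstacle is the well-definedness in the second paragraph: showing that $(\underleftarrow{\lim}f)^{*}$ carries the ideal of relations cutting out $T(\underleftarrow{\lim}[Y/T])$ inside the operational ring into the corresponding ideal for $X$. This is exactly where one must verify that the (weighted) blow-up and deformation-to-normal-cone presentations of the two virtual classes, and in particular the exceptional divisors appearing in Lemma \ref{keel}, are matched up by the canonical correspondence $\Gamma(Y)\mapsto\Gamma(X)$ of the previous Proposition; once that compatibility is in hand, the rest reduces to the projection formula and to the functoriality already recorded in Propositions \ref{push and pull} and \ref{tree to loop}.
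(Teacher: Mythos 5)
Your route is genuinely different from the paper's: you work directly with the inductive presentation of the tautological rings from Proposition \ref{push and pull} and Remark \ref{compute} and try to descend the operational pullback through the two surjections $R_Y\twoheadrightarrow T(\underleftarrow{\lim}[Y/T])$ and $R_X\twoheadrightarrow T(\underleftarrow{\lim}[X/T])$, whereas the paper factors $f$ as a closed embedding followed by a smooth morphism and, in each case, produces explicit cycle-level representatives of the two virtual classes --- a cone $C_Y$ in a bundle $E$ on $\underleftarrow{\lim}[Y/T]$ and a cycle $C_X$ in $\underleftarrow{\lim}f^*(E)$ --- such that $f_E$ maps $C_X$ into $C_Y$, together with a correction class $\alpha$ (a product of classes $\alpha_j$ chosen so that $v(\underleftarrow{\lim}_{T_j}[X/T])\cap\alpha_j$ pushes forward to the fundamental class of $[Y/T]_j$) satisfying $f_{E*}(\alpha\cap[C_X])=[C_Y]$. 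Everything in the paper's argument --- the existence of $\underleftarrow{\lim}f_*$, the existence of $\underleftarrow{\lim}f^*$ via the projection formula, and the birational statement --- is extracted from this one compatibility of representatives.

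The gap in your version is exactly the step you flag as "the main obstacle," and flagging it is not the same as closing it. You need that $(\underleftarrow{\lim}f)^*$ carries $\{\alpha_Y\in R_Y:\alpha_Y\cap v_Y=0\}$ into $\{\alpha_X\in R_X:\alpha_X\cap v_X=0\}$, and your justification ("the smooth open part pulls back inside the smooth open part, and each boundary divisor pulls back to a sum of boundary divisors supported along the paths $T_j$") is an assertion, not an argument: the virtual classes are defined by selecting only certain top-dimensional components of $p^*C_{\Upsilon}$ (Definition \ref{virtual class}), and there is no a priori reason the components discarded on the $X$-side and on the $Y$-side correspond under $f$. This is precisely the compatibility that the paper manufactures by hand via the deformations of Lemma \ref{cone def} and Proposition \ref{tree to loop} (components discarded upstairs are shown to be contracted downstairs, or matched after deformation). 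Relatedly, your stratum-by-stratum projection formula for $f_*$ assumes each stratum of $\underleftarrow{\lim}[X/T]$ maps onto a stratum of $\underleftarrow{\lim}[Y/T]$ of the same fibre-product shape, which again is not automatic for excess-dimensional components; and without some analogue of the identity $f_{E*}(\alpha\cap[C_X])=[C_Y]$ you have no mechanism to conclude $\underleftarrow{\lim}f_*\,v(\underleftarrow{\lim}[X/T])=v(\underleftarrow{\lim}[Y/T])$ in the birational case --- "matched or contracted" must be proved at the level of the cone cycles, which is the actual content of the paper's proof.
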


\begin{proof}

We study first the case when $f$ is an embedding.  In this case the decomposition in chambers of the image of the moment map for $X$ is a subdivision of the decomposition for $Y$. We enlarge the category of quotients of $X$ by isomorphisms such that we obtain an isomorphism between the graph of chambers associated to $X$ and $Y$. Let $\Gamma$ be a maximal tree in associated to $Y$, and $ \Delta(Y) $ and $\Delta(X)$ be the diagonal of $\prod_{i  \in j \in \Gamma} [Y/T]_i$ respectively $\prod_{i  \in j \in \Gamma} [X/T]_i$ .
Let $C_Y$ be the cone of $\underleftarrow{\lim}_{\Gamma}[Y/T]$ in  $\prod_{ j \in \Gamma} [Y/T]_j$, embedded in $E(Y)$ the pullback of the normal bundle of $\Delta (Y)$ in  $\prod_{i  \in j \in \Gamma} [Y/T]_i$. The restriction of $C_Y$ to $ \underleftarrow{\lim}_{\Gamma}[X/T]$  contains the cone $ C(X)$ of  $\underleftarrow{\lim}_{\Gamma}[X/T]$ in  $\prod_{ j \in \Gamma} [X/T]_j$. Let $E(X)$ be the pullback of the normal bundle of $\Delta(X)$ in  $\prod_{i  \in j \in \Gamma} [X/T]_i$. Notice that the natural embeddings $C(X) \subset E(X)$,  $C(Y) \subset E(Y)$ and $E(X) \subset E(Y)$ and $C(X) \subset C(Y)$ form a commutative diagram. By pulling everything back to   $ \underleftarrow{\lim}[Y/T]$ we can define   $\underleftarrow{\lim} f^*$ and  $\underleftarrow{\lim} f_*$.  As pulling back and pushing forward classes between $ \underleftarrow{\lim}[Y/T]$ and  $ \underleftarrow{\lim}[X/T]$ can be done via pull back by pulling back and pushing forward classes between $\prod_{ j \in \Gamma} [Y/T]_j$ and $\prod_{ j \in \Gamma} [X/T]_j$, we conclude that $\underleftarrow{\lim} f^*$ and  $\underleftarrow{\lim} f_*$ can be restricted to tautological rings.

We consider now the case when the morphism $ f : X \to Y$  is smooth.  We will construct a  cone $C_Y$ inside a vector bundle $E$ on $\underleftarrow{\lim}[Y/T]$, of Chow class $v(\underleftarrow{\lim}[Y/T])$, and a cycle $C_X$ inside $ \underleftarrow{\lim} f^*(E)$ of class $v(\underleftarrow{\lim}[X/T])$ such that the natural morphism  $ f_E : \underleftarrow{\lim} f^*(E)  \to E$ maps $C_X$ into $C_Y$.This implies the existence of $\underleftarrow{\lim} f_*$.  We will check that there exists a class $\alpha \in A^*(\underleftarrow{\lim}[X/T]) $ such that $$f_{E*}(\alpha \cap [C_X])=[ C_Y].$$ By using this equality in conjunction with projection formula we can prove the existence of  $\underleftarrow{\lim} f^*$.

We will employ the notations from the previous proposition, with $\Gamma(Y)$ being a maximal tree and for each vertex $j$ of $\Gamma(Y)$, $T_j$ denotes the corresponding path in $\Gamma(X)$. We complete each of $\Gamma(X)$ to a maximal tree by completing each path $T_j$ to a bigger tree. We will continue to call them $\Gamma(X)$ and $T_j$ but they denote now a maximal tree, respectively a tree mapped to $j$. We will denote by $C(\underleftarrow{\lim_{T_j}}[X/T])$ a cycle in $\underleftarrow{\lim_{T_j}}[X/T]$ such that the class of $C(\underleftarrow{\lim_{T_j}}[X/T])$ is $v(\underleftarrow{\lim_{T_j}}[X/T])$. $N$ is the normal bundle of the diagonal $\Delta(Y)$ of $\prod_{i  \in j \in \Gamma(Y)} [Y/T]_i$, $E$ is the pullback of $N$ to $\underleftarrow{\lim}[Y/T]$. $C_Y$ is the restriction of the normal cone of $\underleftarrow{\lim_{\Gamma(Y)}}[Y/T] \to \prod_{j \in \Gamma} [Y/T]_j$ to $\underleftarrow{\lim}[Y/T]$. Let $C$ denote the normal cone of $\Delta(Y) \times_{\prod_{i  \in j \in \Gamma(Y)} [Y/T]_i}
\prod_{j \in \Gamma(Y)} C(\underleftarrow{\lim_{T_j}}[X/T])$ in $\prod_{j \in \Gamma(Y)} \underleftarrow{\lim_{T_j}}[X/T]$. There is a natural embedding of $C$ in the pull-back of $E$ which we will denote by $E'$. Let $E_{\underleftarrow{\lim_{\Gamma(X)}}[X/T]}$ and $C_{\underleftarrow{\lim_{\Gamma(X)}}[X/T]}$ denote the restriction of $E'$ and $C$ to $\underleftarrow{\lim_{\Gamma(X)}}[X/T]$. We further deform $E'$ to the normal cone of $E_{\underleftarrow{\lim_{\Gamma(X)}}[X/T]}$ in $E'$which induces a deformation of $C$ to $C'(X)$, the normal cone of $C_{\underleftarrow{\lim_{\Gamma(X)}}[X/T]}$. Note that the deformation of $E'$ splits now as a sum of two vector bundles: $E_{\underleftarrow{\lim_{\Gamma(X)}}[X/T]}$ and the pullback of the relative tangent bundle of the morphism $\Delta(X) \to \Delta(Y)$, which we will denote by $T_{\Delta(X)|\Delta(Y)}$  If $C(X)$ denotes the restriction of $C'(X)$ to $\underleftarrow{\lim}[X/T]$ by Lemma\ref{cone def} and the proof of Theorem 4.4, $[C(X)]$ as a class in the deformation of $E'$ is the same with $v(\underleftarrow{\lim}[Y/T])$. $C_X$ is a cycle in $C_{\underleftarrow{\lim_{\Gamma(X)}}[X/T]}$ restricted to $\underleftarrow{\lim}[Y/T]$ of class equal with $[C(X)]$. We should notice also that each time we discard some of the components of the normal cones by completing a local loop in $X$ an restricting to the inverse limit these components are contracted by the morphism into the corresponded space for $Y$.  In fact, instead of discarding the components of the normal cone, we can use Propositon 3.6 to contract them up to deformations.  First of all the deformation from Proposition 3.6 can be done in 2 steps: first it can be done for $Y$ and this induces a deformation for $X$, and then we can deform the deformation of $X$.   The contractions from Proposition 3.6 for $X$ either correspond to similar contractions for $Y$, in the case when closing a loop upstairs correspond to closing a loop downstairs, or this contractions identifies chains of orbits whose image is identical in $Y$ anyhow.

 We pick in each $A^*( \underleftarrow{\lim_{T_j}}[X/T])$ a cohomology class $\alpha_j$ such that $v(\underleftarrow{\lim_{T_j}}[X/T])\cap \alpha_j $ has as push-forward in $A_*([Y/T]_j)$ the fundamental class of $[Y/T]_j$.
 The pull back of $\prod_i \alpha_i \cup \prod_j \alpha_j$ to $C(X)$ will be denoted by $\alpha$ and has the property that $$f_{E*}(\alpha \cap [C_X])=[ C_Y].$$

As any morphism between smooth varieties can be factored in a closed embedding and a smooth morphism, we finished the
 proof.
\end{proof}

 We will continue to consider the smooth morphism between $X$ and $Y$ and restrict attention to an open sets  $U \subset \prod_{ j \in \Gamma} [X/T]_j$   such that the morphism between $\prod_{ j \in \Gamma} [X/T]_j$ and $\prod_{ j \in \Gamma} [Y/T]_j$ restricted to the $U_X$ and $U_Y$ is smooth.

 We consider $\cC_Y$ to denote the intrinsic normal cone of $\underleftarrow{\lim_{\Gamma(Y)}}[Y/T]$, (see \cite{behrend}) and the complex $E^{\bullet}= T \to N$ where $T$ is the pullback of the tangent bundle of $ \prod_{j \in \Gamma(Y)}$ to $\underleftarrow{\lim_{\Gamma(Y)}}[Y/T]$, and $ N$ is the pull-back of normal bundle of the diagonal $\Delta(Y)$. Following \cite{behrend} we define $h^1/h^0(E^{\bullet})$ as the stack quotient of $N$ by $T$. There is a natural embedding of $\cC_Y$ into $h^1/h^0(E^{\bullet})$.

From now on we will restrict all the stacks to $U$ whenever meaningful.
Preserving the notation from the proof of the previous theorem because of the way $U$ was chosen $ C = f_{\underleftarrow{\lim}}^*C_Y$ and its further deformation leads to $C'(X)= f_{\underleftarrow{\lim}}^*C_Y \times_ {\underleftarrow{\lim_{\Gamma(X)}}} C_2 $, where $C_2$ is the normal cone of $ \underleftarrow{\lim_{\Gamma(X)}}$ in $\underleftarrow{\lim_{\Gamma(Y)}}[Y/T]\times_{ \prod_{ j \in \Gamma} [Y/T]_j} \prod_{ j \in \Gamma} [X/T]_j$. We denote by $\cC_{\Gamma(X|Y)}$ the relative intrinsic normal cone of $ f_{\underleftarrow{\lim}}: \underleftarrow{\lim_{\Gamma(X)}}[X/T] \to \underleftarrow{\lim_{\Gamma(Y)}}[Y/T]$. We denote by $T^{\bullet}$ the complex given by the map from the relative tangent bundle  of $\prod_{ j \in \Gamma} [X/T]_j \to \prod_{ j \in \Gamma} [Y/T]_j$ restricted to $\underleftarrow{\lim_{\Gamma(X)}}[X/T]$ and the relative tangent bundle of  $\prod_{i \in j \in \Gamma} [X/T]_i \to \prod_{i \in j \in \Gamma} [Y/T]_i$.

The morphism between $X \to Y$ that we will consider will be of the form $Z \times \PP^N  \to \PP^N$, given by the projection on the second factor, and  the open set $U_Y$ corresponds to pairs of $T$ orbits which do not get contracted either when they are mapped in $Z$ or in $\PP^N$.

There exists an Artin stack  which we will denote $\underleftarrow{\lim_{G}}[BT] $, where $G$ is the graph associated to the chamber decomposition of the image of moment mapdefined on $\PP^N$.  $\underleftarrow{\lim_{G}}[BT] $ remembers the fibers of the universal family over   $\underleftarrow{\lim}[\PP^N/T] $ but forgets the morphism in $\PP^N$.  In other words there is smooth morphism  $\underleftarrow{\lim}[\PP^N/T] \to \underleftarrow{\lim_{G}}[BT] $. To check this we define  $\underleftarrow{\lim}[\PP^N/T]\times_{\underleftarrow{\lim_{G}}[BT] } \underleftarrow{\lim}[\PP^N/T] $ as the open set $U$ in  $\underleftarrow{\lim}[\PP^N \times \PP^N/T] $  such that for any $x \in U$ the universal family over $U$ is a union of $T$ orbits which are not contracted by projection to either $\PP^N$. The fact that $\underleftarrow{\lim_{G}}[BT] $ is an Artin stack is equivalent with the morphism $U \to  \underleftarrow{\lim}[\PP^N/T]$ being smooth. This is true as the the fiber of this morphism corresponds to a choice of $N+1$ equivariant sections in a line bundle. We should notice that for any partial graph $\Gamma$ corresponding to the inverse system associated to the action of $T$ on $\PP^N$ we could also create an Artin stack $\underleftarrow{\lim_{\Gamma}}[BT] $. We will give in fact the construction of this stack without to much reference to $\PP^n$. Let $ \mathcal{M}_{0,2,2}$ be the open substack of the Artin stack $\mathcal{M}_{0,2}$ of prestable genus zero curves with 2 marked points, consisting in those curves which have at most 2 components and have at least one marked point on each component. Notice that there is another open set in $\cM_{0,2}$ isomorphic to $B \CC^*= [point/\CC^*]$ corresponding to irreducible curves with 2 marked points. There is also a morphism from  $ \mathcal{M}_{0,2,2}$ to  $B \CC^*$ which forgets one of the components of any curve with 2 components. For each chamber $F_i$ in the image of the moment map for the $T$ action on $\PP^N$ we consider a copy of $BT_i=[point/T]$. For each wall $F_j$ let $\CC^*_j$ be the one parameter group associated to $F_j$ and $T_j=T/\CC^*_j$.  We denote by $[BT]_j:= [\cM_{0,2,2}/T_j]$. For $i \in j$ we have a morphism $[BT]_j \to [BT]_i$ induced by the morphism $ \mathcal{M}_{0,2,2} \to  B \CC^*$.  For each graph $\Gamma$ as above we can define $\underleftarrow{\lim_{\Gamma}}[BT] $ to be the inverse limit corresponding to the associated inverse system of maps between $BT_j$'s and $BT_i$'s.
% After insisting so much on the relation between families of curves with $\CC^*$ action an families of toric varieties with the action of $T$ the above construction should be justified enough. As before there is a two term complex $E^{\bullet}$ over $\underleftarrow{\lim_{\Gamma}}[BT] $  and an embedding of the intrinsic normal of  $\underleftarrow{\lim_{\Gamma}}[BT] $ into  $E^{\bullet}$.  We will denote the class of $\underleftarrow{\lim_{\Gamma}}[BT] \subset  $E^{\bullet}$ by $vir(\underleftarrow{\lim_{\Gamma}}[BT]$.

\begin{theorem}

Let $Z$ be a smooth projective variety with an action of the torus $T$. Let $\Gamma$ be a maximal tree associated to the moment map of $Z$. Let $\cT$ be the universal family (with non-compact fiber)over $\underleftarrow{\lim_{\Gamma}}[Z/T]$. Let $e:\cT \to Z$ be the evaluation map, $\pi :\cT \to \underleftarrow{\lim_{\Gamma}}[Z/T]$ the projection map and $f: \underleftarrow{\lim_{\Gamma}}[Z/T] \to \underleftarrow{\lim_{\Gamma}}[BT]$ the forgetful map that forgets the morphism to $Z$. Let $\cC_Z$ denote the the relative intrinsic normal cone of $f: \underleftarrow{\lim_{\Gamma}}[Z/T] \to \underleftarrow{\lim_{\Gamma}}[BT]$. $R^{f}\pi_*(e^*T_Z)$ has cohomology only in degree $0$ and $1$ , where $T_Z$ is the tangent bundle of $Z$, $R^{f}\pi_*$ is the fixed part of the direct image.  There is an embedding of $\cC_Z $ into $h^1/h^0(R^{f}\pi_*(e^*T_Z))$. The pull back of $vir(\underleftarrow{\lim_{\Gamma}}[BT])$ via the class of $\cC_Z$ in  $h^1/h^0(R^{f}\pi_*(e^*T_Z))$ is $vir( \underleftarrow{\lim_{\Gamma}}[Z/T] )$.
\end{theorem}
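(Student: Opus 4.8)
The plan is to follow the Behrend--Fantechi blueprint relative to the base Artin stack $\underleftarrow{\lim}_{\Gamma}[BT]$: first produce a relative perfect obstruction theory for $f$, then let $\cC_Z$ embed into the associated vector bundle stack, and finally identify the resulting virtual class with the one of Definition \ref{virtual class} by reducing to the local models of Proposition \ref{moduli} and the deformation arguments of Section $4$. Throughout we work \'etale--locally on $\underleftarrow{\lim}_{\Gamma}[Z/T]$, where, by the gluing construction and the proof of Proposition \ref{moduli}, the family $\cT$ is a union of relative torus orbits whose dual graph over each point is a contraction of $\Gamma$. For the cohomology vanishing, fix a geometric point; the corresponding fibre of $\pi$ is $\cup_i T_i$ with $T_i\cong T$ and, for $j=\{i,i'\}$ in the relevant contraction, $T_i\cap T_{i'}\cong T/\CC^*_j$. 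Since a contraction of the tree $\Gamma$ is again a tree, the nerve of the cover $\{T_i\}$ is one--dimensional, and since each $T_i$ and each $T_i\cap T_{i'}$ is affine, the \v{C}ech complex of $e^*T_Z$ for this cover has cohomology concentrated in degrees $0$ and $1$. Taking the fixed part under the relative $T$--action (an exact operation, $T$ being reductive) and using base change, $R^f\pi_*(e^*T_Z)$ is a perfect complex of amplitude $[0,1]$, hence has cohomology only in degrees $0$ and $1$; this is the first assertion.

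For the second assertion, set $E^{\bullet}:=(R^f\pi_*(e^*T_Z))^{\vee}$, a perfect complex of amplitude $[-1,0]$. The equivariant deformation theory of an equivariant map $\varphi\colon C\to Z$ with $C$ fixed in $\underleftarrow{\lim}_{\Gamma}[BT]$ is governed by the fixed parts of $H^0(C,\varphi^*T_Z)$ and $H^1(C,\varphi^*T_Z)$; the standard comparison morphism then gives a map $E^{\bullet}\to L_f$ to the relative cotangent complex of $f$ which is an isomorphism on $h^0$ and an epimorphism on $h^{-1}$, i.e.\ a relative perfect obstruction theory (the fixed--part decoration is legitimate precisely because $\cT\to\underleftarrow{\lim}_{\Gamma}[Z/T]$ carries a relative $T$--action with the base as quotient). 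By the general machinery this identifies the relative intrinsic normal cone $\cC_Z$ with a closed subcone stack of $h^1/h^0((E^{\bullet})^{\vee})=h^1/h^0(R^f\pi_*(e^*T_Z))$, which is a vector bundle stack. This gives the embedding claimed.

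It remains to identify the virtual classes. The Artin stack $\underleftarrow{\lim}_{\Gamma}[BT]$ is assembled from the pieces $[BT]_i=[\mathrm{pt}/T]$ and $[BT]_j=[\cM_{0,2,2}/T_j]$ by the fibre product of Remark \ref{colimit}, so Definition \ref{virtual class} applies verbatim and produces $vir(\underleftarrow{\lim}_{\Gamma}[BT])$. Locally the morphism $f$ is the ``universal'' smooth morphism of the construction preceding this theorem (a choice of $n+1$ equivariant sections of a line bundle), and the local models of $\underleftarrow{\lim}_{\Gamma}[Z/T]$ over each chart of the base are obtained from those of $\underleftarrow{\lim}_{\Gamma}[BT]$ by the $e^*T_Z$--data, exactly as $X$ was obtained from $Y$ in the functoriality theorem of Section $6$. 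Running the deformation argument of the independence theorem of Section $4$ together with Lemma \ref{cone def}, but now relatively over $\underleftarrow{\lim}_{\Gamma}[BT]$, one finds that the normal cone $C_{\Upsilon}$ of $\underleftarrow{\lim}_{\Upsilon}[Z/T]$ in $\prod_j[Z//T]_j$ sits inside $N_{\Upsilon}=f^*N_{\Upsilon}^{BT}\oplus(\text{relative part})$ and deforms, relative to the zero section, to $f^*C_{\Upsilon}^{BT}$ fibred with a cone which is precisely the local model of $\cC_Z\subset h^1/h^0(R^f\pi_*(e^*T_Z))$. Passing to components of the expected dimension $\dim Z-n$ and pulling back along $p\colon\underleftarrow{\lim}_G[Z/T]\to\underleftarrow{\lim}_{\Upsilon}[Z/T]$ as in Definition \ref{virtual class}, the left--hand side is $vir(\underleftarrow{\lim}_{\Gamma}[Z/T])$ and the right--hand side is exactly the intersection of $p^*vir(\underleftarrow{\lim}_{\Gamma}[BT])$ with $[\cC_Z]$ inside the vector bundle stack $h^1/h^0(R^f\pi_*(e^*T_Z))$, i.e.\ the pull--back of $vir(\underleftarrow{\lim}_{\Gamma}[BT])$ through the class of $\cC_Z$.

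The main obstacle is this last identification: one must match, cycle by cycle, the ad hoc definition via the Gysin map $\Delta^!$ of Remark \ref{colimit} with the Behrend--Fantechi refined Gysin map of the vector bundle stack, while controlling (i) the failure of $f$ to be smooth along the strata parametrizing reducible curves, (ii) the compatibility of the fixed part $R^f\pi_*$ with base change, and (iii) the compatibility of the relative obstruction theory with the gluing by fibre products, so that on each local chart $h^1/h^0(R^f\pi_*(e^*T_Z))$ is the relative normal bundle of the corresponding weighted blow--up of Proposition \ref{moduli}. Concretely this amounts to re--running the deformation--to--the--normal--cone arguments of Lemma \ref{cone def} and of the independence theorem in the relative setting over $\underleftarrow{\lim}_{\Gamma}[BT]$, which is where the bulk of the work lies.
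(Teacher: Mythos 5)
Your overall strategy (a relative Behrend--Fantechi theory over $\underleftarrow{\lim}_{\Gamma}[BT]$, amplitude $[0,1]$ via an affine cover with tree combinatorics, then a deformation argument to match the two Gysin constructions) has the right shape, but two steps do not hold up as written. First, in the amplitude argument you cover a degenerate fibre by its irreducible components $T_i$ and assert that each is affine and isomorphic to $T$. The $T_i$ are closed (so this is a normalization/Mayer--Vietoris sequence, not a \v{C}ech cover of opens), and a component corresponding to a node of $\Gamma$ with $k\geq 2$ adjacent edges is $T$ with $k$ boundary divisors attached --- e.g.\ $\PP^1$ when $T=\CC^*$ --- which is not affine; so the cohomological bound does not follow from your cover. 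The paper instead covers the universal family by the open sets $\cT_j$ indexed by the \emph{edges} $j$ of $\Gamma$, each of which is relatively affine over the base; the tree structure then yields the two-term resolution $0 \to E \to \oplus_{j} E_{|\cT_j} \to (\oplus_{i\in j} E_{|\cT_i})/(\oplus_{i} E_{|\cT_i}) \to 0$, giving amplitude $[0,1]$ and, simultaneously, the identification of $R^{f}\pi_*(e^*T_Z)$ with the explicit two-term complex $T^{\bullet}$ built from the relative tangent bundles of the ambient products.

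Second --- and this is the substantive gap --- the identification of the two virtual classes is exactly the point you defer (``where the bulk of the work lies''). The paper does not re-run the deformation arguments relatively over $\underleftarrow{\lim}_{\Gamma}[BT]$; it reduces the whole theorem to the functoriality statement already proven for the smooth morphism $Z\times\PP^N\to\PP^N$, using the auxiliary open set $U\subset \underleftarrow{\lim}_{\Gamma}[\PP^N\times Z/T]$ and the smoothness of $\underleftarrow{\lim}[\PP^N/T]\to\underleftarrow{\lim}_{\Gamma}[BT]$. After that reduction ``the only thing left to check'' is the derived-category identification $T^{\bullet}=h^1/h^0(R^{f}\pi_{U*}(\bar{u}^*e^*T_Z))$, which is the Mayer--Vietoris computation above. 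Without either that reduction or an actual execution of your proposed relative deformation argument, the final assertion is not established. Likewise, your claim that $(R^{f}\pi_*(e^*T_Z))^{\vee}\to L_f$ is a relative perfect obstruction theory is asserted rather than derived; in the paper the embedding of $\cC_Z$ comes for free once $R^{f}\pi_*(e^*T_Z)$ is identified with $T^{\bullet}$, since the relative intrinsic normal cone embeds into $h^1/h^0(T^{\bullet})$ by the very construction of $\underleftarrow{\lim}_{\Gamma}[Z/T]$ as a fibre product.
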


\begin{proof}
Let $U$ denote as before the open set in $ \underleftarrow{\lim_{\Gamma}}[\PP^N \times Z/T]$. Let $u: U \to \underleftarrow{\lim_{\Gamma}}[Z/T]$ denote the natural morphism that forgets the morphism into $\PP^N$, and $\bar{u}$ the lift of $u$ to the universal families.
Because of the discussion above the only thing left to check is that $T^{\bullet}=u^*(h^1/h^0(R^{f}\pi_*(e^*T_Z)))=h^1/h^0(R^{f}\pi_{U*}(\bar{u}^*(e^*T_Z))$ in the derived category, where $\pi_U$ denotes the projection of the universal family over $U$ to $U$.  From its construction $\cT$ the universal family over $U$ is naturally covered with the open sets $\cT_j$ which are the the pull-back of the universal families over $[Z \times \PP^N/T]_j$.   Notice that for each $i \in j $ there is an embedding $\cT_i \hookrightarrow \cT_j$. For any sheaf $E$ on $\cT$ there is an exact sequence: $$0 \rightarrow E \rightarrow \oplus_{j \in \Gamma}E_{|\cT_j} \rightarrow \frac{\oplus_{i \in j \in \Gamma}E_{|\cT_i}}{\oplus_{i \in \Gamma}E_{|\cT_i}} \rightarrow 0,$$ where the morphisms are given by restriction map. The morphism $\pi_U$ restricted to $\cT_j$ or $\cT_i$ is affine so $$R^{f}\pi_{U*}(E) =\pi_{|U*}^f(  \oplus_{j \in \Gamma}E_{|\cT_j}) \rightarrow \pi_{|U*}^f(\frac{\oplus_{i \in j \in \Gamma}E_{|\cT_i}}{\oplus_{i \in \Gamma}E_{|\cT_i}}),$$ in the derived category. In the case $E=\bar{u}^*(ev^*(T_Z))$ this complex is exactly $T^{\bullet}$.

\end{proof}

\section{Further developments}

 Could the constructions in this article be extend to the case when we replace the action of a torus  by the action of a reductive group ?

There are some remarkable differences between acting with a torus and acting with a reductive group. In the last case we could not expect our moduli space to be inverse limit of $GIT$ quotients. We will give an explicit example of this situation. Let $\PP^n_d$ denote the projective space of dimension $(n+1)(d+1)-1$ thought as $(n+1)$-tuples of homogeneous polynomials in two variables of  degree $d$
 up to multiplication by scalers. This space is a compactification of the space of degree $d$ maps from $\PP^1$ to $\PP^n$ and as such it has a natural $Sl_2$ action. If we are interested in the degeneration of all  $SL_2$ orbits inside  $\PP^n_d$ the natural moduli space we construct is the moduli space of stable maps $\ol{M}_{0, 0}(\PP^n, d)$. From this point of view the universal family over  $\ol{M}_{0, 0}(\PP^n, d)$ is the graph space  $\ol{M}_{0, 0}(\PP^n \times \PP^1, (d,1))$.  $\ol{M}_{0, 0}(\PP^n, d)$ is different from the $GIT$ quotient of $\PP^n_d$ by $Sl_2$. Also the $GIT$ quotient is unique so there is no variation of $GIT$ that we can speak about. The birational transformations that relate  $\ol{M}_{0, 0}(\PP^n \times \PP^1, (d,1))$ and $\PP^n_d$ are described in \cite{noi1} and can be restated completely in terms of the action of $ SL_2$. These birational transformations are very much related with the constructions in this paper as it can be seen from Poposition 2.3. My hope is that this type of constructions can be extended to a large class of varieties with reductive group action.

 For example, similar to $ \PP^n_d$ one can consider the space $\PP^n_{d,k}$ where we replace polynomials in $2$ variables by polynomials in $k+1$ variables. Success in constructing such a moduli space for the action of $SL(k+1)$ on  $\PP^n_{d,k}$, could lead to a definition of a moduli space of stable maps between projective spaces together with an intersection theory on these moduli spaces.

%References

\providecommand{\bysame}{\leavevmode\hbox to3em{\hrulefill}\thinspace}


\begin{thebibliography}{DL}

\bibitem[A]{alex} V. Alexeev: Complete moduli in the presence of semiabelian group action.
Ann. of Math. (2) 155 (2002), pp. 611–708.

\bibitem[BD]{baez}  J.C. Baez and J. Dolan, \emph{Categorification}, Higher Category Theory, page 1--36, in Contemp. Math., 230, Amer. Math. Soc., Providence, RI, 1998.

\bibitem[BF]{behrend} K. Behrend and B. Fantechi,\emph{ The intrinsic normal cone}, Invent. Math.128 (1997), 45-88.

\bibitem[B]{Bial} A. Bialynicki-Birula, \emph{Some theorems on actions of algebraic groups} Ann. of Math., 98 (1973), page 480-497

\bibitem[BS]{bs} A. Bialynicki-Birula and A. J. Sommese, \emph{Quotients by  $ {{\mathbf{C}}^\ast}$ and  $ {\text{SL}}(2,{\mathbf{C}})$ actions}, Trans. Amer. Math. Soc. 279 (1983), page 773--800.

\bibitem[C]{cox}  D.Cox \emph{The Homogeneous Coordinate Ring of a Toric Variety} xxx.lanl.gov/pdf/alg-geom/9210008.pdf

\bibitem[GP]{pan}T. Graber, R. Pandharipande \emph{Localization of the virtual classes}  Invent. Math., Volume 135(1999), Issue 2, pp 487-518

\bibitem[H]{hu} Y. Hu,  \emph{The geometry and topology of quotient varieties of torus actions}. Duke Math. J. 68 (1992), no. 1, 151--184.

\bibitem[F]{fulton} W. Fulton, \emph{Intersection Theory}, Springer-Verlag, Berlin, 1984.

\bibitem[FM]{mcph} W. Fulton and R. MacPherson \emph{Compactification of configuration spaces}, Ann. Math. 1994, vol.139, page 183-225.

\bibitem[HKT]{hacking-keel-tevelev} P. Hacking, S. Keel, J. Tevelev, \emph{Compactification of the moduli space of hyperplane arrangements}, J. Algebraic Geom. 15 (2006), no.4, page 657--680.

\bibitem[Ka]{kap-chow} M. M. Kapranov, \emph{Chow Quotients of Grassmannians. I}, I.M.Gelfand Seminar, page 29--110, in Adv. Soviet Math., 16, Part 2, Amer. Math. Soc., Providence, RI, 1993.

\bibitem[KSZ]{kap}
M.M. Kapranov, B. Sturmfels, A.V. Zelevinsky, \emph{Quotients of toric varieties}, Math.Ann. 1991 volume 290, page 643--655.

\bibitem[Ke]{keel} S. Keel, \emph{Intersection theory of moduli space of stable $n$--pointed curves of genus zero.}, Trans. Amer. Math. Soc. 330 (1992), no.2, page 545--574.

\bibitem[MFK]{mum} D.Mumford, J.Fogarty, F.Kirwan, \emph{Geometric Invariant Theory} 3rd edition 2002  Springer-Verlag, Berlin, 1992

\bibitem[MM]{noi} A.Mustata,A. Mustata \emph{The structure of a local embedding and Chern classes of weighted blow-ups}, J. Eur. Maths. Soc. 14 (2012), no.6 page 1739-1794

\bibitem[MM1]{noi1}
A. Mustata, A. Mustata, \emph{Intermediate moduli spaces of stable maps},
\emph{ Invent. Math.} 167 (2007), no. 1, page 47--90


\bibitem[MM2]{noi2}
A. Mustata, A. Mustata, \emph{The Chow ring of $\ol{M}_{0,m}(n,d)$},
Journal f\"ur die Reine und Angewandte Mathematik 615 (2008)


\bibitem[T]{thad} M.Thaddeus \emph{Complete colliniation revisited}, Math.Ann. 1999, volume 315, page 469--495

\bibitem[V]{vistoli}
A.Vistoli, \emph{Intersection theory on algebraic stacks and their moduli spaces} in Inv. Math. 1989, volume 97, page 613--670

\bibitem[Z]{ziegler}  G.Ziegler \emph{Lectures on Polytopes}--Graduate Texts in Mathematics, Vol.152 Springer 1995

\end{thebibliography}
\end{document}